\documentclass[11pt]{article}

\usepackage{stmaryrd}
\usepackage{appendix}

\usepackage{subfig}
\usepackage[normalem]{ulem}

\usepackage{amsfonts}
\usepackage{amsthm,amssymb,amsmath}
\usepackage{latexsym}
\usepackage{graphics}
\usepackage{epic}
\usepackage{epsfig}
\usepackage{romanbar}
\usepackage{psfrag}
\usepackage{pict2e}  
\usepackage{bbm}
\usepackage{dsfont}
\usepackage{color}
\usepackage{tikz}
\usepackage{authblk}
\newtheorem{thm}{Theorem}
\newtheorem{lem}{Lemma}
\newtheorem{cor}{Corollary}
\newtheorem{dfn}{Definition}
\newtheorem{prop}{Proposition}
\usetikzlibrary{decorations.pathreplacing}

\usepackage{geometry}
\geometry{hmargin=3cm,vmargin=3cm}

\newcommand{\ee}{\varepsilon}


\newcommand{\calC}{\mathcal{C}}
\newcommand{\calD}{\mathcal{D}}

\newcommand{\calH}{\mathcal{H}}

\newcommand{\calO}{\mathcal{O}}

\newcommand{\calR}{\mathcal{R}}

\newcommand{\calU}{\mathcal{U}}


\newcommand{\frb}{\mathfrak{b}}

\newcommand{\frL}{\mathfrak{L}}

\newcommand{\frR}{\mathfrak{R}}


\newcommand{\bbE}{\mathbb{E}}

\newcommand{\bbP}{\mathbb{P}}

\newcommand{\bbR}{\mathbb{R}}

\newcommand{\bbZ}{\mathbb{Z}}


\newcommand{\bfp}{\mathbf{p}}

\newcommand{\bfT}{\mathbf{T}}

\newcommand{\bdcalG}{\boldsymbol{\mathcal{G}}}

\newcommand{\bfX}{\mathbf{X}}
\newcommand{\bfY}{\mathbf{Y}}

\newcommand{\bfN}{\mathbf{N}}

\newcommand{\bdcalE}{\boldsymbol{\mathcal{E}}}
\newcommand{\bdcalU}{\boldsymbol{\mathcal{U}}}
\newcommand{\bfU}{\mathbf{U}}
\newcommand{\bfI}{\mathbf{I}}
\newcommand{\bfS}{\mathbf{S}}
\newcommand{\bfR}{\mathbf{R}}
\newcommand{\bfs}{\mathbf{s}}
\newcommand{\bfi}{\mathbf{i}}

\newcommand{\Perco}{\textrm{Perco}}
\newcommand{\ncc}{\mathcal{K}}
\renewcommand{\deg}{\textrm{deg}}
\newcommand{\Star}{\textrm{Star}}

\begin{document}

\title{Line-of-sight Cox percolation on Poisson-Delaunay triangulation} 
\date{}
\author[1,2]{David Corlin Marchand\footnote{david.corlin.marchand@gmail.com}}
\author[1]{David Coupier\footnote{david.coupier@imt-nord-europe.fr}}
\author[1]{Beno\^{i}t Henry\footnote{benoit.henry@imt-nord-europe.fr}}

\affil[1]{IMT Nord Europe, Institut Mines-T\'el\'ecom, Univ.\ Lille, F-59000 Lille, France; }
\affil[2]{Laboratoire de Math\'ematiques Rapha\"{e}l Salem, Universit\'e de Rouen Normandie, avenue de
l’Universit\'e, BP 12, Technop\^{o}le du Madrillet, 76801 Saint-\'Etienne-du-Rouvray, France.}

\maketitle

\begin{abstract}
In this work, percolation properties of device-to-device (D2D) networks in urban environments are investigated. The street system is modeled by a Poisson-Delaunay triangulation (PDT). Users are of two types: given either by a Cox process supported by the edges of the PDT or by a Bernoulli process on the vertices of the PDT (i.e. on streets and at crossroads). Percolation of the resulting connectivity graph $\bdcalG_{p,\lambda,r}$ is interpreted as long-range connection in the D2D network. According to the parameters $p,\lambda,r$ of the model, we state several percolation regimes in Theorem \ref{thm:mainResult} (see also Fig. \ref{fig:NotreDiag}). This work completes and specifies results of Le Gall et al \cite{le2021continuum}. To do it, we take advantage of a percolation tool, inspired by enhancement techniques, used to our knowledge for the first time in the context of communication networks.
\end{abstract}

\textbf{Key words: } stochastic geometry, continuum percolation, Cox point process, D2D network, random environment, enhancement.
\bigbreak
\textbf{AMS 2010 Subject Classification:}\\
\emph{Primary:} 60K35; 60G55; 60D05\\ \emph{Secondary:} 68M10; 90B15
\bigbreak
\textbf{Acknowledgments.}The authors warmly thank Q. Le Gall, B. Blaszczyszyn and E.
Cali for exciting discussions which were the starting point of this work, as well as D. Dereudre for valuable discussions on the topic. This work has been carried in the context of the project Beyond5G, funded by
the French government as part of the economic recovery plan, namely “France
Relance” and the investments for the future program. D. Corlin Marchand was partly funded by the ANR GrHyDy ANR-20-CE40-0002. Also, this work has been partially supported by the RT GeoSto 3477.

\bigbreak

\newpage

\section{Introduction}

\subsection{State of the art}

Due to the recent explosion of the number of connected devices and the emergence of new and high data rate services (as for instance video sharing, online gaming and internet browsing), cellular networks, i.e. telecommunication networks where the link between devices is wireless, have to be reconsidered. For this purpose, one of the main technologies investigated in the literature for about twenty years is \textbf{device-to-device} (D2D): see \cite{AWM} for a survey. D2D communication in cellular networks is defined as a direct and short-range communication between two mobile users without the need for the signal to be rooted through extra network structure (as a base station). Hence, a D2D communication between two devices far away from each other remains possible via a chain of consecutive D2D links where some devices located between the transmitter and the receiver intervene and relay the signal. This is why long-range connection in D2D networks can be naturally interpreted as a percolation problem.

The basic model to study D2D networks is the Poisson Boolean model or Gilbert's model in which random discs are scattered uniformly and independently in the plane $\mathbb{R}^2$. See \cite{Gilbert} for the seminal article of Gilbert and \cite{meester1996continuum} for a modern reference on continuum percolation theory. Later, more realistic approaches for D2D networks have been developed. Let us mention the SINR graph \cite{DFMMT} taking into account the interferences creating by devices located at proximity to the transmitter-receiver pair. Besides, in real-world networks in urban areas, devices-- we will also talk about users --cannot be located anywhere but rather on a street system. Let us notice that the use of random tesselations to model street systems has already been considered and validated \cite{GVS,VGFS}. A doubly stochastic Poisson Boolean model whose centers of discs are supported by a random tesselation is a \textbf{Cox process}. Percolation of such Cox processes have been recently investigated in \cite{hirsch2019continuum,jahnel2022phase} and especially in the context of D2D networks \cite{le2020crowd,le2021continuum}.\\

These last references, corresponding to the Le Gall thesis work, constitutes the starting point of the present study. Our goal is to complete and specify the percolation regimes stated in \cite{le2021continuum}.

\subsection{Our model: the connectivity graph $\bdcalG_{p,\lambda,r}$}
\label{sect:OurModel}

\noindent
\textbf{Urban media.} Let us consider a marked Poisson Point Process (PPP) $\bar{\bfX}$ on $\mathbb{R}^2$ with intensity $\textrm{dx}$ and independent marks in $[0,1]\times\mathbb{R}_+^{\mathbb{N}}$ with distribution $\mathcal{U}([0,1])\!\otimes\!( \mathcal{E}xp(1) )^{\otimes \mathbb{N}}$, where $\textrm{dx}$, $\mathcal{U}([0,1])$ and $\mathcal{E}xp(1)$ resp. denote the Lebesgue measure on $\mathbb{R}^2$, the uniform distribution on $[0,1]$ and the exponential distribution with rate $1$. We refer to the book by Last and Penrose \cite{last2017lectures} for background on Poisson point processes. Let $\bfX$ be the projection of $\bar{\bfX}$ onto its first ordinate $\mathbb{R}^2$. We can thus write
\[
\bar{\bfX} = \big\{ \big( x, V_x, (\mathcal{E}_{x,k})_{k\geq 1} \big) : x \in \bfX \big\}
\]
where $(V_x, (\mathcal{E}_{x,k})_{k\geq 1})_x$ is a family of elements of $[0,1]\times\mathbb{R}_+^{\mathbb{N}}$. The process $\bfX$ is a homogeneous PPP on $\mathbb{R}^2$ with intensity $1$ (w.r.t. the Lebesgue measure $\textrm{dx}$).

Let $\bfT$ be the Poisson Delaunay Triangulation (PDT) built from the process $\bfX$. Precisely, $\bfT$ is the (undirected) graph whose vertex set is given by the points of $\bfX$ and whose edge set is defined as follows: for $x,x' \in \bfX$, $\{x,x'\}$ is an edge of $\bfT$ if there exists a circle going through $x$ and $x'$ and having no points of $\bfX$ in its interior. With an abuse of notation, we also denote by $\bfT$ the subset of $\bbR^2$ defined as the union of segments $[x,x']$ where $\{x,x'\}$ are edges of the PDT.

The Delaunay triangulation $\bfT$ modelizes the \textit{urban media} (or the random environment) on which lies the D2D network. Edges and vertices (i.e. elements of $\bfX$) of $\bfT$ are resp. interpreted as streets and crossroads.\\

\noindent
\textbf{Users.} There are two kinds of users. For $p \in [0,1]$, we set
\[
\bfX_p := \{ x \in \bfX : V_x < p \}
\]
which can be viewed as a Bernoulli site percolation process with parameter $p$ performed on $\bfX$. When $V_x < p$, we say that there is a \textit{user at crossroad} $x$ or that the vertex $x \in \bfX$ is \textit{open} (and \textit{closed} in the alternative case).

Let $\bar{\bfY}$ be a marked Cox process on $\bbR^2$ with intensity measure $\lambda \Lambda$ and independent marks in $\bbR_+$ with distribution $\mathcal{E}xp(1)$, where $\lambda$ is a positive parameter and $\Lambda$ denotes the one-dimensional Hausdorff (random) measure associated to the Delaunay triangulation $\bfT$. Let $\bfY$ be the projection of $\bar{\bfY}$ onto its first ordinate $\mathbb{R}^2$:
\[
\bar{\bfY} = \big\{ (y, \mathcal{E}_y) : y \in \bfY \big\} ~.
\]
This means that conditionally on $\bfX$, the point process $\bfY$ is a marked PPP with intensity $\lambda$ on $\bfT$. I.e. the number of points of $\bfY$ in a given segment of $\bfT$ with length $\ell$ is a Poisson r.v. with mean $\lambda \ell$. The process $\bfY$ represents \textit{users on streets}.

Furthermore we assume that, conditionally on $\bfX$, the process $\bar{\bfY}$ is independent from the collection $(V_x, (\mathcal{E}_{x,k})_{k\geq 1})_{x \in \bfX}$.\\

\noindent
\textbf{Random connection radii.} Let $\|\cdot\|$ be the Euclidean distance. In order to take into account physical obstacles in the urban media (as buildings), we forbid connections between users on different streets, i.e. only \textit{line-of-sight} (LOS) connections are considered here. Hence, two users (on streets) $y,y' \in \bfY$ are \textit{connected} iff they belong to the same street and
\[
\| y - y' \| \leq \frac{r}{2} \big( \mathcal{E}_y + \mathcal{E}_{y'} \big) ~,
\]
where $r$ is a positive parameter called the \textit{connection radius}.

Due to the LOS constraint, users at crossroads (i.e. elements of $\bfX_p$) then appear to be crucial to ensure the signal propagation from street to street. This is why we also think about them as \textit{relays}. Their presence can also be interpreted as scattering and reflection effects occurring in LOS communications. Let $x \in \bfX_p$ be one of them. It deploys independent connection ranges
\[
\frac{r'}{2} \mathcal{E}_{x,1} , \ldots , \frac{r'}{2} \mathcal{E}_{x,\deg(x)}
\]
along its incident edges, where $\deg(x)$ is the degree of $x$ in $\bfT$ and $r' > 0$ is a constant parameter. See the top side of Fig. \ref{fig:connection}. Precisely, we label edges incident to $x$ from $1$ to $\deg(x)$ in the counterclockwise sense and starting from the semi-line $[x,x+(1,0))$. Any user $y \in \bfY$ located on the $i$-th incident edge to $x$, with $i\leq \deg(x)$, is \textit{connected} to $x$ iff
\[
\| x - y \| \leq \frac{r'}{2} \mathcal{E}_{x,i} + \frac{r}{2} \mathcal{E}_{y} ~,
\]
Let $x' \in \bfX$ be the neighbor of $x$ in $\bfT$ linked by the $i$-th incident edge to $x$. Assume that $x' \in \bfX_p$ and say that $\{x,x'\}$ is the $j$-th incident edge to $x'$, with $j \leq \deg(x')$. In this case, $x,x'$ are \textit{connected} iff
\[
\| x - x' \| \leq \frac{r'}{2} \big( \mathcal{E}_{x,i} + \mathcal{E}_{x',j} \big) ~.
\]
In order to simplify the exposition, we fix from now on
\[
\fbox{$r' = 2r$}
\]
but we claim that all our results hold for any parameter $r' \geq 2r$. See discussion in Section \ref{sec:finaldiscussion}.\\

When two users $u,u' \in \bfX_p \cup \bfY$ are connected (whatever their types), we write $u \sim u'$. This necessarily means that they are on the same street (including both crossroads). The \textit{connectivity graph} $\bdcalG_{p,\lambda,r}$ is the undirected graph $(\bfX_p \cup \bfY , E)$ whose edge set $E$ is made up with pairs $\{u,u'\}$ of connected users. As before, we still use the symbol $\bdcalG_{p,\lambda,r}$ to refer to the subset of $\bfT$ defined as the union of segments $[u,u']$ where $u \sim u'$. See the bottom side of Fig. \ref{fig:connection} for a simulation.

\begin{figure}[h!]
\begin{center}
\begin{tabular}{cp{1cm}c}
\includegraphics[scale=0.9]{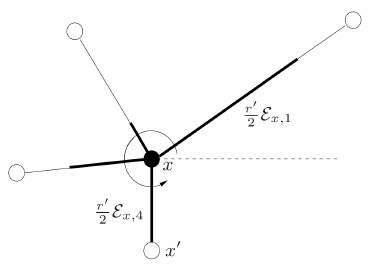} \\ \includegraphics[scale=0.5]{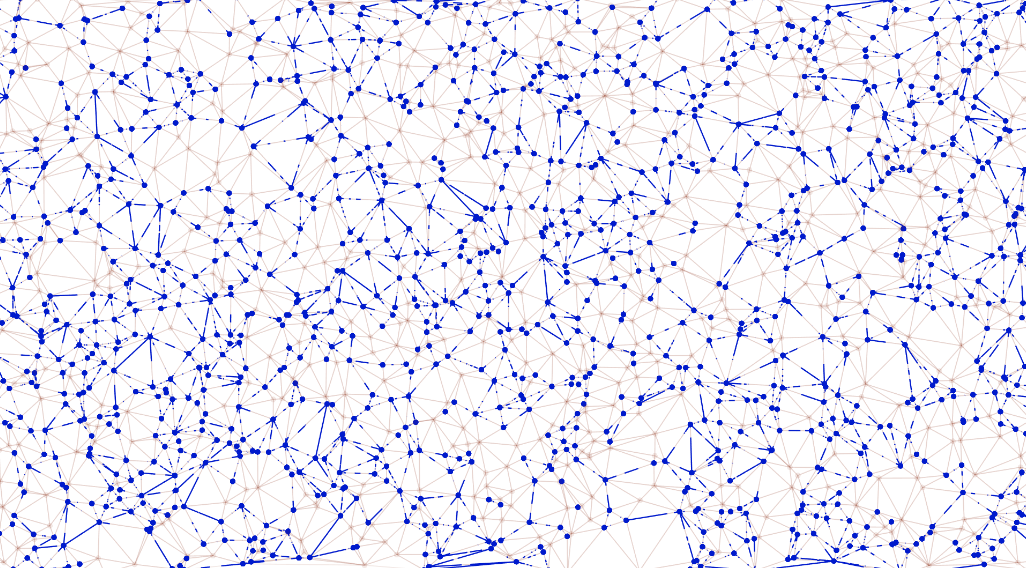}
\end{tabular}
\caption{On top is depicted a user at crossroad $x \in \bfX_p$ (the black point) and its four neighbors in the Delaunay triangulation $\bfT$. The four connection ranges $\frac{r'}{2} \mathcal{E}_{x,1},\ldots,\frac{r'}{2} \mathcal{E}_{x,4}$ are represented with bold lines. On this example, $\frac{r'}{2} \mathcal{E}_{x,4}$ is larger than the distance $\|x - x'\|$, but the corresponding excess does not matter in our model: this is why the bold line is stopped at $x'$. Hence, $x'$-- if it is open --will be automatically connected to $x$, as well as all users on the street $[x,x']$. Below, a simulation of the connectivity graph (in blue) $\bdcalG_{p,\lambda,r}$ in the box $[-60,60]\times [-30,30]$ with $r'=2r=1.8$, $\lambda=1$ and $p=0.7$.}
\label{fig:connection}
\end{center}
\end{figure}

\subsection{Percolation in $\bdcalG_{p,\lambda,r}$}

We say that percolation occurs in $\bdcalG_{p,\lambda,r}$ or that the connectivity graph $\bdcalG_{p,\lambda,r}$ percolates when it contains an unbounded cluster. To describe this phenomenon, we need some notations. For any $x,y \in \bdcalG_{p,\lambda,r}$, we write $x \leftrightarrow y$ in $\bdcalG_{p,\lambda,r}$ (or merely $x \leftrightarrow y$ if there is no ambiguity) if there exists a continuous path $\gamma : [0,1] \to \mathbb{R}^2$ satisfying $\gamma([0,1]) \subset \bdcalG_{p,\lambda,r}$, starting from $\gamma(0) = x$ and going to $\gamma(1) = y$. This notion is naturally extended to subsets $A,B$ of $\bbR^2$: we write $A \leftrightarrow B$ in $\bdcalG_{p,\lambda,r}$ if there exists $(x,y) \in A \times B$ such that $x \leftrightarrow y$ in $\bdcalG_{p,\lambda,r}$. For any real $\alpha > 0$ and $z \in \bbR^2$, let us set $B_\alpha(z) := z + [-\alpha/2 , \alpha/2]^2$ and $S_\alpha(z) := \partial B_\alpha(z)$ its frontier. When $z=0$, let us merely write $B_\alpha := B_\alpha(0)$ and $S_\alpha := S_\alpha(0)$. Hence, $\bdcalG_{p,\lambda,r}$ percolates if the event
\[
\Perco := \bigcup_{u \in \bfX_p \cup \bfY} \{u \leftrightarrow \infty \}
\]
occurs where $\{u \leftrightarrow \infty \} := \cap_{\alpha > 0} \{u \leftrightarrow S_\alpha(u)\}$.

Because our model is ergodic w.r.t. translations of $\mathbb{R}^2$, the event $\Perco$ satisfies a $0$-$1$ law whatever the parameters $p,\lambda,r$: $\bbP \big( \mbox{$\Perco$ in $\bdcalG_{p,\lambda,r}$} \big) \in \{ 0 , 1 \}$. See Chapter 2.1 of \cite{meester1996continuum} for details. Let us now introduce the \textit{percolation threshold} $\lambda_c(p,r)$ defined as
\begin{equation}
\label{critical}
\lambda_c(p,r) := \inf \left\{ \lambda \geq 0 : \, \bbP \big( \mbox{$\Perco$ in $\bdcalG_{p,\lambda,r}$} \big) = 1 \right\} \in [0,\infty]
\end{equation}
indicating from which intensity of the Cox process $\bfY$, the graph $\bdcalG_{p,\lambda,r}$ percolates. Precisely, $\lambda < \lambda_c(p,r)$ implies that $\bdcalG_{p,\lambda,r}$ a.s. admits only finite clusters while $\lambda > \lambda_c(p,r)$ implies the a.s. existence of an unbounded cluster in $\bdcalG_{p,\lambda,r}$.

Let us point out that, by standard coupling arguments, the probability $\bbP \big( \mbox{$\Perco$ in $\bdcalG_{p,\lambda,r}$} \big)$ is a non-decreasing function w.r.t. to each of parameters $p,\lambda,r$. As a consequence, the percolation threshold $\lambda_c(p,r)$ is a non-increasing function w.r.t. $p$ and $r$.\\

Our goal here is to identify whether the connectivity graph $\bdcalG_{p,\lambda,r}$ percolates or not according to parameters $p,\lambda,r$. The current work has been initially motivated by Le Gall et al \cite{le2021continuum} that we describe in the next section.

\subsection{The starting point of our work: Le Gall et al \cite{le2021continuum}}
\label{sect:StartingPoint}

\noindent
\textbf{Their model.} In \cite{le2021continuum}, Le Gall and his co-authors introduced a model for D2D networks which is very similar to our connectivity graph $\bdcalG_{p,\lambda,r}$. Actually there are only two differences between their model and ours.

The first difference consists in the fact that their D2D network is based on \textbf{a Voronoi tesselation instead of a Delaunay triangulation}. Precisely, a Voronoi tesselation, built from a PPP with intensity $1$, provides the urban media in which the streets are the edges delimiting the Voronoi cells and the crossroads are the locations where streets meet.

Users are defined in the same way. Users at crossroads are generated by a Bernoulli site percolation process with parameter $p$, that we still denote by $\bfX_p$. Users on streets are given by a Cox process on $\bbR^2$, with intensity measure $\lambda \Lambda$ where $\Lambda$ is still the one-dimensional Hausdorff measure associated to the Voronoi tesselation. The process of users on streets is again denoted by $\bfY$.

The second difference between the model of \cite{le2021continuum} and ours lies in \textbf{the connection rule which is deterministic for Le Gall et al while it is random for our model} (see Section \ref{sect:OurModel}). In \cite{le2021continuum}, two users $u,u' \in \bfX_p \cup \bfY$ are connected iff they belong to the same street (LOS constraint) and $\| u - u' \| \leq r$.

Finally, we keep the same notations for the connectivity graph $\bdcalG_{p,\lambda,r}$, the percolation event $\Perco$ and the critical intensity $\lambda_c(p,r)$.\\

\noindent
\textbf{Their results.} Results of \cite{le2021continuum} can be described using a phase diagram-- see Fig. \ref{fig:DiagPhase} --in which, for any values of parameters $r$ (in abscissa) and $p$ (in ordinate), the critical intensity $\lambda_c(p,r)$ is determined; null, infinite or non-trivial (i.e. in $\mathbb{R}_+^\ast$).

\begin{figure}[h!]
\begin{center}
\includegraphics[scale=1.5]{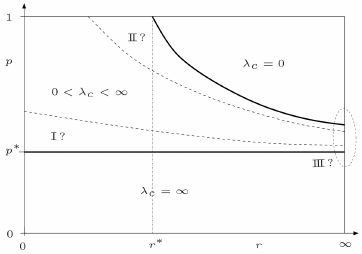}
\caption{Between the two dotted lines is the non-trivial regime corresponding to a non-trivial critical intensity. In this regime, the intensity $\lambda$ of users on streets really matters to determine if percolation occurs or not. This phase diagram presents three uncertain regions marked by the roman symbols \Romanbar{1}, \Romanbar{2} and \Romanbar{3}.}
\label{fig:DiagPhase}
\end{center}
\end{figure}

Le Gall et al exhibit a threshold $p^\ast \in (0,1)$ such that, for any $p < p^\ast$, the graph $\bdcalG_{p,0,\infty}$ a.s. does not percolate. Remark that, in this graph, if both extremities of a street are in $\bfX_p$, they are automatically connected since $r = \infty$. Henceforth, adding users on streets cannot help to percolate: for any $\lambda > 0$, $\bdcalG_{p,\lambda,\infty}$ does not percolate either meaning that $\lambda_c(p,\infty) = \infty$. By monotonicity, $\lambda_c(p,r) = \infty$ for any $p < p^\ast$ and any $r$. Roughly speaking, below the horizontal line $p=p^\ast$, there is never percolation.

In \cite{le2021continuum}, the authors mention an hypothetical region-- denoted by the symbol \Romanbar{1} in Fig. \ref{fig:DiagPhase} --that they conjecture not to exist and corresponding to the overlapping of the \textit{permanantly subcritical regime} $\{(r,p) : \lambda_c(r,p) = \infty \}$ above the line $p = p^\ast$. This suspicious region \Romanbar{1} corresponds to couples $(r,p)$ for which $\bdcalG_{p,0,\infty}$ percolates but not $\bdcalG_{p,\lambda,r}$ whatever the value of $\lambda$ (even large).

Now let us introduce the following percolation threshold: for any $r$,
\begin{equation}
\label{p_c}
p_c(r) := \inf \left\{ p \geq 0 : \, \bbP \big( \mbox{$\Perco$ in $\bdcalG_{p,0,r}$} \big) = 1 \right\} \wedge 1 ~.
\end{equation}
The curve $r \mapsto p_c(r)$ is represented in bold in Fig. \ref{fig:DiagPhase}. It is non-increasing by monotonicity and may admit discontinuity points. It is proved in \cite{le2021continuum} that this curve leaves the horizontal line $p=1$ at some non-trivial $r^\ast$. By definition, given $r$ and $p > p_c(r)$, the graph $\bdcalG_{p,0,r}$ percolates a.s. meaning that $\lambda_c(r,p) = 0$. For such couple $(r,p)$, the graph $\bdcalG_{p,\lambda,r}$ always percolate (for any value of $\lambda$). Again, an hypothetical region appears corresponding to the overlapping of the \textit{permanently supercritical regime} $\{(r,p) : \lambda_c(r,p) = 0 \}$ below the curve $\{ (r,p) : p = p_c(r) \}$ and denoted by the symbol \Romanbar{2} in Fig. \ref{fig:DiagPhase}. This suspicious region \Romanbar{2}, conjectured in \cite{le2021continuum} to not exist, corresponds to couples $(r,p)$ for which $\bdcalG_{p,0,r}$ does not percolate a.s. but, for any (small) $\lambda > 0$, the graph $\bdcalG_{p,\lambda,r}$ percolates.

Finally, let us quote a third uncertain region in the phase diagram depicted in Fig. \ref{fig:DiagPhase} by the symbol \Romanbar{3} and corresponding to the limit of $p_c(r)$ when $r \to \infty$: it is necessarily larger than $p^\ast$ but possibly equal to it.\\ 

In the current paper, about our connectivity graph $\bdcalG_{p,\lambda,r}$ defined in Section \ref{sect:OurModel}, we specify the phase diagram obtained in \cite{le2021continuum} and represented in Fig. \ref{fig:DiagPhase} in the three following directions. We prove that
\begin{itemize}
\item[$(i)$] the region \Romanbar{1} does not exist;
\item[$(ii)$] the region \Romanbar{2} is reduced (at most) to the curve $\{ (r,p) : p = p_c(r) \}$;
\item[$(iii)$] the region \Romanbar{3} is cleaned; when $r\to \infty$, $p_c(r)$ tends to $1/2$ (which will be our $p^\ast$).
\end{itemize}
These three improvements are presented in the next section (Theorem \ref{thm:mainResult}). 

However, the price to pay to get these results is to modify the original model studied in \cite{le2021continuum} by Le Gall and his co-authors. We justify these modifications in Section \ref{sect:Modifs}

\subsection{Our results: a precise phase diagram}

\begin{thm}
\label{thm:mainResult}
Using the notations of Section \ref{sect:OurModel}, the following statements hold.
\begin{enumerate}
\item If $r\in [0,\infty]$ and $p\in [0,1/2]$ then $\lambda_{c}(p,r) = \infty$. If $r\in (0,\infty]$ and $p\in (1/2,1]$ then $\lambda_{c}(p,r) < \infty$. In particular, the permanently subcritical regime is identified:
\[
\{(r,p) : \lambda_c(r,p) = \infty \} = [0,\infty] \times [0,1/2] ~.
\]
\item Let us set $r^{\ast} := \sup \{ r \geq 0 : p_{c}(r) = 1\}$. Then $0 < r^{\ast} < \infty$.
\item The sequence $(p_{c}(r))_{r\geq 0}$ is non-increasing and tends to $1/2$ as $r \to \infty$.
\item The permanently supercritical regime $\{ (r,p) : \lambda_{c}(p,r) = 0 \}$ is equal to
\[
\{ (r,p) : \, r^\ast < r < \infty \; \mbox{ and } \; p > p_c(r) \}
\]
up to possible additional points lying on the curve $\{ (r,p) : p = p_c(r) \}$.
\end{enumerate}
\end{thm}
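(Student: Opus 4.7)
The proof reduces to two inclusions. The easy one is $\{r^{\ast} < r < \infty,\ p > p_{c}(r)\} \subset \{\lambda_{c}(p,r) = 0\}$: by the very definition of $p_{c}(r)$ and monotonicity in $p$, the graph $\bdcalG_{p,0,r}$ already percolates, so adding a Cox process (which only introduces new edges) keeps the percolation property for every $\lambda \geq 0$. The real content of the theorem is the reverse inclusion: whenever $\bdcalG_{p,0,r}$ does not percolate, in particular whenever $p < p_{c}(r)$, or whenever $r \leq r^{\ast}$ with $p < 1$, one must show $\lambda_{c}(p,r) > 0$, i.e.\ that non-percolation is robust under a sufficiently sparse Cox intensity.

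My plan is to encode $\bdcalG_{p,\lambda,r}$ as a site-bond percolation model on $\bfT$. Because of the LOS constraint, a Cox user living on a street $[x,x']$ cannot connect to anything outside $[x,x']$, so any infinite cluster must traverse an infinite sequence of open crossroads of $\bfX_{p}$; the Cox users merely enlarge the probability that an edge $\{x,x'\}$ of $\bfT$ is \emph{activated}, i.e.\ that a chain of crossroad-to-Cox, Cox-to-Cox and Cox-to-crossroad connections joins $x$ to $x'$ through $[x,x']$. Conditionally on $\bfT$ and on the site marks, the activation events on different edges are independent, and the probability that an edge of length $\ell$ becomes newly activated due to the Cox users is at most $1 - e^{-\lambda \ell}$, a quantity continuous in $\lambda$ and vanishing at $\lambda = 0$. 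The $\lambda > 0$ model is therefore a small, edge-level enhancement of the $\lambda = 0$ one.

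Having set up this perturbative picture, I would implement a renormalisation argument. Fix a large scale $N$ and tile $\mathbb{R}^{2}$ by boxes $B_{N}(z)$. Declare $B_{N}(z)$ \emph{good} if (i) the PDT restricted to an enlarged neighbourhood has controlled geometry (bounded degrees and edge lengths); (ii) no cluster of $\bdcalG_{p,0,r}$ crosses a large concentric annulus around $B_{N}(z)$; (iii) $\bfY$ has no point in that neighbourhood. Non-percolation of $\bdcalG_{p,0,r}$ plus ergodicity makes crossing probabilities small for $N$ large, hence condition (ii) is very likely; condition (i) is standard for Poisson-Delaunay; and condition (iii) has conditional probability $\exp(-\lambda \cdot \text{length of } \bfT \text{ in the neighbourhood})$, which can be made arbitrarily close to $1$ by choosing $\lambda$ small after $N$ is fixed. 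A Peierls-type contour argument on the renormalised lattice then forbids an infinite cluster in $\bdcalG_{p,\lambda,r}$ and yields $\lambda_{c}(p,r) > 0$; this simultaneously covers the case $r \leq r^{\ast}$ since $p_{c}(r) = 1$ there.

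The main obstacle is that good-box indicators are not independent, because the PDT induces long-range correlations between distant boxes. This is precisely where the enhancement machinery announced in the abstract enters: one needs to stabilise the $\lambda = 0$ model on finite windows, use a $k$-dependent stochastic domination to compare bad boxes with a subcritical Bernoulli site percolation on a rescaled lattice, and only then apply the circuit argument. The subtle quantitative step is to check that $N$ and $\lambda$ can be chosen so that the three good-box conditions hold simultaneously with enough margin to feed the domination lemma; this requires a genuine finite-size criterion for $\bdcalG_{p,0,r}$ in its subcritical phase, which I expect to be the technical heart of the proof.
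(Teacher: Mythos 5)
Your proposal addresses only Item~4, and for that item it retraces the strategy the authors explicitly describe as having tried and abandoned. The pivotal step in your renormalisation — ``non-percolation of $\bdcalG_{p,0,r}$ plus ergodicity makes crossing probabilities small for $N$ large, hence condition (ii) is very likely'' — is where the argument breaks. Ergodicity gives only $\bbP(S_1 \leftrightarrow S_N) \to 0$, with no rate; your good-box condition requires, after a union bound over a boundary of size $O(N)$, that $\bbP(S_1 \leftrightarrow S_N) = o(1/N)$, and no such quantitative decay is available. In standard settings one would obtain this from a sharp-threshold or exponential-decay theorem for the $\lambda = 0$ model, and this is exactly what the paper reports attempting: the OSSS/Duminil-Copin route fails because the event $\{S_1 \leftrightarrow S_n\}$ is not increasing for the FKG order (see Fig.~\ref{fig:NoFKG}), and the Ziesche-style alternative fails because of the long-range dependencies created by the Delaunay triangulation. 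Your plan therefore reproduces, without resolving, the precise obstruction that motivated the paper's actual argument.

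The paper's proof of Item~4 is of a different nature. It never attempts to estimate decay rates of connection events at fixed $\lambda = 0$. Instead it sets $\Theta_n(\lambda,r) = \bbP(S_1 \leftrightarrow S_n \text{ in } \tilde{\bdcalG}_{p,\lambda,r})$, establishes Russo-type formulas for $\partial_\lambda \Theta_n$ and $\partial_r \Theta_n$ in terms of pivotal edges (Proposition~\ref{prop:russoformulas}), and proves a pointwise comparison $\partial_\lambda \Theta_n \leq C(r) e^{\lambda r/2}\, \partial_r \Theta_n$ (Proposition~\ref{prop:comparisonpartialderivatives}, built on a one-dimensional covering analysis on a single edge, Proposition~\ref{prop:coveringasegment}). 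A finite-increments argument then shows that percolating for every $\epsilon > 0$ at radius $r_1$ while not percolating at $\lambda = 0$ and radius $r_2 > r_1$ is impossible: an increase of $\lambda$ from $0$ to $\epsilon$ can be compensated by a slightly larger $r$. This is an enhancement argument, not a renormalisation one. Crucially, it is the reason the authors replaced the deterministic connection rule of \cite{le2021continuum} by random, exponentially-distributed radii: with the deterministic rule one has $\partial_r \Theta_n = 0$ and the compensation is impossible. Your proposal does not exploit this modification of the model at all, which is a strong indication that it is not on the path the proof actually follows. A correct writeup would either have to supply the missing quantitative subcriticality estimate for $\bdcalG_{p,0,r}$ (a substantial open problem given the failure of FKG), or switch to the derivative-comparison argument.

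Finally, Items~1, 2, 3 of the theorem are not touched by your proposal. They rely on renormalisation to $4$-dependent fields and Liggett--Schonmann--Stacey domination (Sections~\ref{ssec:highintensity}, \ref{sect:ProofItem3}, and the proof of Item~2), using stabilization of the Delaunay triangulation and the sharpness of the Voronoi percolation transition at $p = 1/2$ — none of which appears in the proposal.
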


Let us comment Theorem \ref{thm:mainResult} and explain how the phase diagram represented in Fig. \ref{fig:NotreDiag} is obtained.

Let us first focus on the connectivity graph $\bdcalG_{p,0,\infty}$ with an infinite connection radius $r = \infty$ (in this case, users on streets are irrelevant and we take $\lambda = 0$). This graph exactly corresponds to the Voronoi percolation model on $\bfX$ since two open crossroads which are both extremities of the same edge of $\bfT$, are automatically linked in $\bdcalG_{p,0,\infty}$ regardless of the distance between them. This model has been widely studied \cite{bollobas2006critical,duminil2019exponential}: the critical probability for this model is known to be $1/2$ with no percolation at the critical point. This justifies the first part of Item $1$: there is no percolation when $p\leq 1/2$ because of the lack of users at crossroads (or relays), whatever the parameters $r,\lambda$. In other words $\lambda_{c}(p,r) = \infty$. The second part of Item $1$, that is $\lambda_{c}(p,r) < \infty$ whenever $p > 1/2$ and $r > 0$, means that for such values of $p,r$, the connectivity graph $\bdcalG_{p,\lambda,r}$ percolates for sufficiently large intensity $\lambda$. W.r.t. results of \cite{le2021continuum} recalled in Section \ref{sect:StartingPoint}, we prove the conjecture, namely the hypothetical region \Romanbar{1} does not exist in our context.

The threshold $p_c(r)$ is defined in our context as in \cite{le2021continuum}, see (\ref{p_c}). Thanks to the monotonicity property of $\bbP \big( \mbox{$\Perco$ in $\bdcalG_{p,0,r}$} \big)$ w.r.t. $r$, the application $r \mapsto p_c(r)$ is non-increasing and starts at $p_c(0) = 1$ since $\bbP \big( \mbox{$\Perco$ in $\bdcalG_{1,0,0}$} \big) = 0$. It forks from the horizontal line $p = 1$ at $r^{\ast} = \sup \{ r \geq 0 : p_{c}(r) = 1\} \in (0 , \infty)$ by Item $2$ and tends (from above) to $1/2$ as $r$ tends to infinity by Item $3$. Also, Items $2$ and $3$ allow proving that the permanently supercritical regime $\{ (r,p) : \lambda_{c}(p,r) = 0 \}$ is non-empty. This regime will be specified by Item $4$.

Item $4$ asserts that the permanently supercritical regime does not overflow below the curve $\{ (r,p) : p = p_c(r) \}$. In particular, we prove that the hypothetical region \Romanbar{2} mentioned in \cite{le2021continuum} and recalled in Section \ref{sect:StartingPoint}, made up with couples $(r,p)$ for which the graph $\bdcalG_{p,\lambda,r}$ does not percolate at $\lambda = 0$ but percolates whenever $\lambda > 0$, is necessarily included in the curve $\{ (r,p) : p = p_c(r) \}$ and then has an empty interior.

\begin{figure}[h!]
\begin{center}
\includegraphics[scale=1.4]{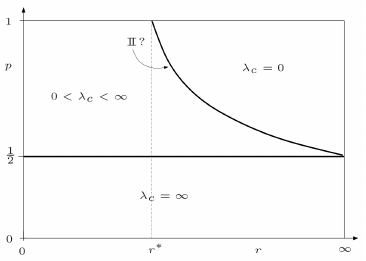}
\caption{Here is the phase diagram corresponding to Theorem \ref{thm:mainResult}. W.r.t. the phase diagram of \cite{le2021continuum} and recalled in Fig. \ref{fig:DiagPhase}, three blurred regions have been removed: the hypothetical region \Romanbar{1} does not exist, the hypothetical region \Romanbar{2}-- if it exists --is reduced to the curve $\{ (r,p) : p = p_c(r) \}$ and $p_c(r)$ tends to $1/2$ as $r$ tends to $\infty$. Remark that both opposite regimes, namely the permanently subcritical and supercritical ones, become very close from each other as $r \to \infty$.}
\label{fig:NotreDiag}
\end{center}
\end{figure}

To complete the previous results, we state crucial information describing the connectivity of the network: when percolation occurs, the unbounded cluster is a.s. unique.

\begin{thm}
\label{thm:uniqueness}
For any set of parameters $(p,\lambda,r)$, there is at most one unbounded cluster in the connectivity graph $\bdcalG_{p,\lambda,r}$ with probability $1$.
\end{thm}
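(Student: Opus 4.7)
The plan is to adapt the classical Burton--Keane uniqueness argument to the present continuum Cox setting. Let $N := N_{p,\lambda,r}$ denote the number of unbounded clusters of $\bdcalG_{p,\lambda,r}$. The full marked process $(\bfX, (V_x,(\mathcal{E}_{x,k})_k)_{x\in\bfX}, \bfY, (\mathcal{E}_y)_{y\in\bfY})$ is invariant and ergodic under translations of $\mathbb{R}^2$: the points $\bfX$ form a homogeneous PPP, their marks are i.i.d.\ and independent of their positions, and conditionally on $\bfX$ the marked process on the edges of $\bfT$ is Poisson with intensity $\lambda\Lambda$ with i.i.d.\ exponential marks, a construction which commutes with translations. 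Since $N$ is a translation-invariant random variable, ergodicity forces $N = n$ a.s.\ for some $n \in \{0,1,2,\dots,\infty\}$, and it remains to exclude $2 \le n < \infty$ and $n = \infty$.

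To exclude a finite $n \ge 2$, I would use a local modification argument of Newman--Schulman type. Assume $2 \le n < \infty$ a.s., and choose $\alpha > 0$ large enough that with positive probability all $n$ unbounded clusters intersect $B_\alpha$. Conditionally on the environment $\bfT$ and on the configuration outside $B_\alpha$, the restriction of the marked Cox process to $B_\alpha$ is a Poisson process of intensity $\lambda\Lambda$ on $\bfT \cap B_\alpha$ with i.i.d.\ exponential marks, and therefore enjoys insertion tolerance. With positive conditional probability the interior then contains a sufficiently dense family of users on each edge of $\bfT \cap B_\alpha$, each with a sufficiently large exponential mark, so that the trace of $\bdcalG_{p,\lambda,r}$ in $B_\alpha$ is connected. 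On the resulting positive-probability event, all unbounded clusters hitting $B_\alpha$ merge into one, so $N \le 1$, contradicting $N = n \ge 2$ a.s.

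The remaining case $n = \infty$ is handled by the trifurcation argument. For a small $\alpha > 0$, call $z \in \mathbb{R}^2$ an \emph{$\alpha$-trifurcation} if $B_\alpha(z)$ intersects exactly one unbounded cluster and removing all users lying in $B_\alpha(z)$ (together with their incident edges in $\bdcalG_{p,\lambda,r}$) splits that cluster into at least three distinct unbounded components. By translation invariance the expected number of trifurcation centres in a disjoint tiling of $[-R,R]^2$ by boxes of side $\alpha$ is proportional to $R^{2}\,\bbP(\text{$0$ is an $\alpha$-trifurcation})$, while a purely deterministic planar argument (associating to each such centre three pairwise disjoint infinite branches reaching $S_{2R}$) bounds this count by $O(R)$. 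Provided the trifurcation probability is positive, a contradiction follows for $R$ large.

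The main obstacle is proving $\bbP(\text{$0$ is an $\alpha$-trifurcation}) > 0$. Contrary to Bernoulli site percolation on $\mathbb{Z}^d$, both the ambient graph and the edge lengths involved in the triple junction are themselves random. Under the hypothesis $n = \infty$ one has many unbounded clusters arbitrarily close to the origin, and the positive density of trifurcations has to be produced by simultaneously using (i) the PPP structure of $\bfX$, which with positive probability places a vertex of $\bfT$ inside $B_\alpha$ with three neighbours far outside $B_\alpha$ lying on three distinct unbounded components of the exterior graph, (ii) the conditional Poisson structure of $\bar{\bfY}$, which allows one to populate (or empty) the three outgoing edges inside $B_\alpha$, and (iii) the independent exponential connection marks, which can be tuned to realise the required local links and to forbid spurious shortcuts. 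Once this positive-density step is secured, the counting contradiction described above closes the proof.
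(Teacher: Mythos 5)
Your high-level plan is the right one and mirrors the paper's structure exactly: ergodicity forces a constant number $N$ of unbounded clusters, a local modification (insertion tolerance of the Cox process and of the crossroad marks, conditionally on $\bfX$) kills the case $2\le N<\infty$, and a Burton--Keane trifurcation count kills $N=\infty$. The crucial point, which your sketch glosses over precisely where the real difficulty lies, is \emph{why} a fully populated box produces a connected piece of the graph. Even conditionally on $\bfX$, turning all crossroads in $B_\alpha(z)$ into relays and covering all streets there with users only gives connectivity if the restriction of the Delaunay triangulation to that region is a connected graph; this is a genuine geometric statement, and it is in fact \emph{false} for the trace on a square $B_\alpha(z)=z+[-\alpha/2,\alpha/2]^2$ (the paper exhibits a counterexample in Fig.~\ref{fig:trace}). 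The paper works with Euclidean balls $\mathfrak B_\alpha$ for this reason and proves, via an inductive argument based on the Bowyer--Watson insertion procedure together with a circle-geometry lemma, that the trace of the Delaunay triangulation on any Euclidean ball is connected (Lemma~\ref{prop:trace}). Without some version of this you cannot claim that your positive-probability interior configuration merges the clusters, nor that you can route a unique tree through the interior to create a trifurcation. This is the key missing ingredient.

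A second, smaller issue concerns how you propose to produce trifurcations. Your item (i) appeals to the PPP structure of $\bfX$ to ``place a vertex of $\bfT$ inside $B_\alpha$ with three neighbours far outside $B_\alpha$.'' But you cannot modify $\bfX$ locally and keep control of the ambient graph: the Delaunay triangulation is not a local function of the point set, so adding or moving a vertex inside $B_\alpha$ can change edges (and hence the configuration you conditioned on) far outside. The paper avoids this by freezing $\bfX$ (hence $\bfT$) entirely and modifying only the marks $(V_x)$, $(\mathcal E_{x,k})$, the Cox process $\bfY$ and its marks inside the ball; the existence of the branching point then comes again from the trace connectivity lemma (three entry points of the trace are joined by a tree inside the trace, and the branch point of that tree, being in $\bfX$, is the trifurcation). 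Finally, the paper combines the resulting positive density of trifurcations with Mecke's formula and an explicit exponential bound on the expected number of long Delaunay edges crossing $\partial B_\alpha$ (half-disc-empty argument) to obtain the $O(\alpha)$ boundary count you invoke; your sketch asserts that count without justifying why boundary edges are few, which in this continuum-with-unbounded-degrees setting also needs a concrete estimate.
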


\subsection{Why do we change the model?}
\label{sect:Modifs}

In order to clean the blurred regions present in the phase diagram of \cite{le2021continuum} and recalled in Fig. \ref{fig:DiagPhase}, we have made two modifications. The first one concerns the urban media. In \cite{le2021continuum}, the street system is given by the edges delimiting the Voronoi cells and the set of relays is provided by a Bernoulli site percolation process performed on crossroads of such street system. However, this model behaves badly w.r.t. percolation properties: in a (say large) box, the absence of horizontally crossing open paths does not imply the presence of a vertically crossing closed one. See the left hand side of Fig. \ref{fig:NoFKG}. Mainly for this reason, this percolation model has not been intensively studied and very few percolation results are known about it.

Conversely, the Bernoulli site percolation model performed on the vertices of the Delaunay triangulation $\bfT$ is much more understood (especially in dimension $2$); see \cite{bollobas2006critical,duminil2019exponential}. It is self-dual (the critical probability is $1/2$), it satisfies the FKG inequality and its phase transition is \textit{sharp}. This last (and deep) property is used to prove that the probability for the graph $\bdcalG_{p,0,\infty}$ of containing a cycle in $B_{3n}$ surrounding $B_n$ (event denoted by the event $\mathcal{C}_{n,0,\infty}$ in Section \ref{ssec:highintensity}) tends to $1$ as $n \to \infty$ whenever $p > 1/2$. This important feature allows us to prove that region \Romanbar{1} does not exist in our context, and lead to Item 1. In comparison, the same fact also holds for the site percolation on crossroads of the street system used in \cite{le2021continuum} but only for $p$ close to $1$.

Stating that $p_c(r)$ tends to $1/2$ (Item 3.) and then clarifying what happens when $r \to \infty$, is based on similar arguments.

\begin{figure}[h!]
\begin{center}
\begin{tabular}{cp{1cm}c}
\includegraphics[scale=0.4]{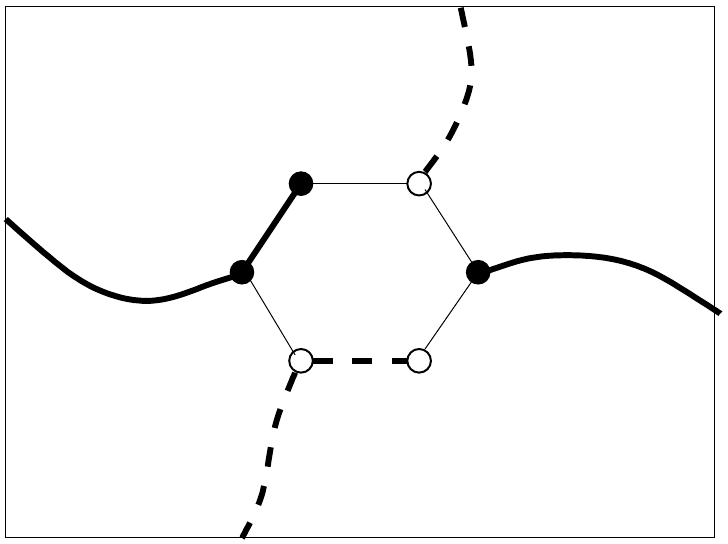} & & \includegraphics[scale=1.2]{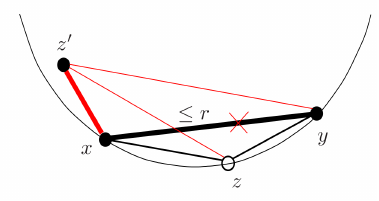}
\end{tabular}
\caption{To the left: for the site percolation model on crossroads of a Voronoi tesselation, it is possible that no horizontally crossing open paths nor vertically crossing closed ones occur. To the right: focus only on $x,y,z \in \bfX$, $z'$ will be added further. Initially, there is no Poisson points in the circumscribed circle $\mathcal{C}$ to $x,y,z$ so the triangle defined by these three vertices is present in the Delaunay triangulation $\bfT$. $x,y$ (black points) are open, i.e. in $\bfX_p$, while $z$ (white point) is closed. In $\bdcalG_{p,0,r}$, $x$ and $y$ are connected since $\| x - y \| \leq r$. Now, let us add a black point (namely $z' \in \bfX_p$) inside $\mathcal{C}$ but at distance from $y$ larger than $r$. The Delaunay triangulation changes (new edges are red): the adding of $z'$ destroyes the edge $\{x,y\}$ and new triangles appear ($xz'z$ and $zz'y$). If $x$ and $y$ were linked in $\bdcalG_{p,0,r}$ only through their common edge, they are no longer linked in $\bdcalG_{p,0,r}$ after adding $z' \in \bfX_p$.}
\label{fig:NoFKG}
\end{center}
\end{figure}

Dealing with the hypothetical region \Romanbar{2} has required much more investigations. The starting point is to assume by absurd that the region \Romanbar{2} is fat. Hence, one could find some $r$ and $p < p_c(r)$ such that $\bdcalG_{p,0,r}$ does not percolate while $\bdcalG_{p,\lambda,r}$ percolates for any (small) $\lambda > 0$. If we were able to prove that $\bdcalG_{\cdot,0,r}$ admits a sharp transition at $p_c(r)$ then we could prove that $\bbP(S_\alpha \leftrightarrow S_{10\alpha} \; \mbox{in $\bdcalG_{p,\lambda,r}$}) \leq \varepsilon$ for $\lambda > 0$ small enough. Thus a multi-scale argument due to Gou\'er\'e \cite{gouere2008subcritical} then allows to show that $\bdcalG_{p,\lambda,r}$ does not percolate which is in contradiction with our initial (absurd) assumption. Our first attempt to get sharp transition has consisted in applying the powerful method of Duminil-Copin et al. \cite{duminil2019exponential,duminil2020subcritical} based on the OSSS inequality. However, the event $\{ S_1 \leftrightarrow S_{n} \; \mbox{in $\bdcalG_{p,0,r}$})$ being not FKG-increasing (i.e. w.r.t. the partial order corresponding to the FKG inequality, see the right hand side of Fig. \ref{fig:NoFKG} for an explanation of this annoying fact), we could not carry out this strategy. To be complete, let us point out here that a recent approach \cite{HJM} for Cox percolation asserts that the strategy of \cite{duminil2019exponential,duminil2020subcritical} may apply without using the FKG inequality, but under Conditions (a)-(b) in Section 2.2 of \cite{HJM} which are both false in our context. A second attempt was to use an alternative approach for sharp transition \cite{duminil2016sharpness}, thus applied by Ziesche \cite{Ziesche} in a continuum context, but the spatial dependencies generated by the Delaunay triangulation makes this strategy inapplicable in our context.

Let us modify our absurd assumption as follows: assume that there exists $(r,p)$ and $\varepsilon > 0$ (small) such that $\bdcalG_{p,0,r+\varepsilon}$ does not percolate while $\bdcalG_{p,\lambda,r}$ percolates for any (small) $\lambda > 0$. Our strategy consists now in proving that, in terms of percolation, a small increase of the user intensity (from $0$ to $\lambda$) can be compensated by a small increase of the connection radius (from $r$ to $r+\varepsilon$), then leading to a contradiction. Such ideas have been developed in the context of \textit{enhancement} (see for instance \cite{GS,Sarkar,DP}). To do it, let us set $\theta_n(r,\lambda) := \bbP(S_1 \leftrightarrow S_{n} \; \mbox{in $\bdcalG_{p,0,r}$})$. Introducing notion of \textit{pivotal} edges, it is possible to give a rigorous sense to partial derivatives $\partial_r \theta_n(r,0)$ and $\partial_\lambda \theta_n(r,0)$. When the connection rule between two users is deterministic as in \cite{le2021continuum} (i.e. $u \sim u'$ iff LOS and $\| u - u' \| \leq r$), computations give $\partial_r \theta_n(r,0) = 0$ while $\partial_\lambda \theta_n(r,0) > 0$ so that it is impossible to compensate a small increase of parameter $\lambda$ by a small increase of parameter $r$.

The fact that $\partial_r \theta_n(r,0) = 0$ is due to the rigid character of the deterministic connection rule of \cite{le2021continuum}. This is the reason why we replace this deterministic rule with a random one involving a distribution with an unbounded support: see Section \ref{sect:OurModel} for details. Thanks to this new connection rule, we are able in Section \ref{sect:Item4} to compute and compare the partial derivatives $\partial_r \theta_n(r,\lambda)$ and $\partial_\lambda \theta_n(r,\lambda)$ and then to carry out this strategy in order to state that the region \Romanbar{2} cannot be fat.

The choice of an exponential distribution is certainly not the only one that works. This will be discussed in Section \ref{sec:finaldiscussion}. Besides, the fact to use independent exponential r.v.'s along different incident edges at a given vertex $x \in \bfX_p$ allows to directly apply Russo's formula and then to give sense to partial derivatives $\partial_r \theta_n$ and $\partial_\lambda \theta_n$. However, with more work, it is could be possible to use the same r.v. along all incident edges ( see the discussion in Section \ref{sec:finaldiscussion}).

Finally we think that this strategy of comparing partial derivatives to establish compensations between small variations of certain parameters of the model should turn out to be promising for many models (with several parameters) of telecommunication networks.\\

The rest of paper is organized as follows. Section \ref{sect:Items1-3} is devoted to the first three items of Theorem \ref{thm:mainResult}. The fourth one which needs to introduce pivotal edges (Definition \ref{dfn:pivotaledge}), is proved in Section \ref{sect:Item4}. Section \ref{sect:uniqueness} states the uniqueness of the unbounded cluster when it exits, i.e. the proof of Theorem \ref{thm:uniqueness}. The paper ends with a discussion on possible improvements.

\section{Proof of Theorem \ref{thm:mainResult}, Items $1$-$3$}
\label{sect:Items1-3}

\subsection{Stabilization}
\label{sect:Stabilization}

Recall that $\bfT$ is the Delaunay triangulation generated by the PPP $\bfX$. A key point about Delaunay triangulation is the following: any triangle $T$ of $\bfT$ is not sensitive to process resampling $\bfX$ outside its circumscribed circle $C$. One sometimes says that the circle $C$ (or its associated disk) \textit{stabilizes} the triangle $T$. This basic remark drives the next definition.

For any bounded Borel set $A$, we define the \textit{stabilization radius} of $A$, denoted by $R(A)$, as the infimum $\rho > 0$ such that $A \oplus B(0,\rho)$ contains all the circumscribed circles to the triangles of $\bfT$ overlapping $A$. Hence, the Delaunay triangulation restricted to $A$ and its one-dimensional Hausdorff random measure, i.e. $\Lambda_A(\cdot) := \Lambda(A \cap \cdot)$, only depend on the PPP $\bfX$ restricted to $A \oplus B(0,\rho)$ whenever $R(A) < \rho$. For more details about stabilization, the reader may refer to \cite{hirsch2019continuum}, Definition 2.3 and Example 3.1 (applied to a Poisson Voronoi tesselation).

In the current work, we only use both elementary properties of stabilization radii stated in Lemmas \ref{lem:StabExpo} and \ref{lem:StabIndependence} below.

\begin{lem}
\label{lem:StabExpo}
There exist constants $C,c > 0$ such that, for any $n$,
\begin{equation}
\label{StabExpo}
\mathbb{P} ( R(B_{n}) > n ) \leq C e^{-c n} ~.
\end{equation}
\end{lem}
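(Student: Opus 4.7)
The plan is to relate the event $\{R(B_n) > n\}$ to the presence of a large empty disk intersecting $B_n$ and then exploit the exponential decay of void probabilities of the homogeneous PPP $\bfX$.

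\textbf{Step 1 (geometric lower bound on the circumradius).} Suppose $R(B_n) > n$. By definition, there exists a triangle $T$ of $\bfT$ overlapping $B_n$ whose circumscribed disk $D(c,\rho)$ is not contained in $B_n \oplus B(0,n)$. Pick $p \in T \cap B_n$ and $q \in \partial D(c,\rho)$ with $d(q, B_n) > n$. Since $T \subset \overline{D(c,\rho)}$, one has $|p-c| \leq \rho$ and $|q-c| = \rho$, so $|q-p| \leq 2\rho$. Combined with $|q-p| \geq d(q,B_n) > n$, this forces $\rho > n/2$.

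\textbf{Step 2 (empty disk with controlled center).} The defining property of Delaunay triangles implies $D(c,\rho) \cap \bfX = \emptyset$. With $p \in B_n \cap \overline{D(c,\rho)}$ as above, I set $c' := c$ if $|c-p| \leq n/2$, and otherwise $c' := p + \tfrac{n/2}{|c-p|}(c-p)$. In both cases $|c'-p| \leq n/2$ and $D(c', n/2) \subseteq D(c,\rho)$, so $D(c',n/2)$ is empty of points of $\bfX$ and its center lies in the bounded set $B_n \oplus B(0, n/2)$.

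\textbf{Step 3 (union bound).} I tile $B_n \oplus B(0,n/2)$ with a lattice of squares of side $\eta < 1$; there are $O(n^2)$ such tiles. For each tile center $c_i$, if some $c'$ in the tile yields an empty $D(c', n/2)$, then by the triangle inequality $D(c_i, n/2 - \eta\sqrt 2) \subseteq D(c', n/2)$ is also empty. Since $\bfX$ has unit intensity, the void probability of this fixed disk equals $\exp\bigl(-\pi(n/2 - \eta\sqrt 2)^2\bigr)$. Combining the three steps via a union bound gives
\[
\mathbb{P}\bigl(R(B_n) > n\bigr) \leq O(n^2) \exp\bigl(-\pi(n/2 - \eta\sqrt 2)^2\bigr) \leq C e^{-cn^2},
\]
which is much stronger than the desired $Ce^{-cn}$.

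There is no serious obstacle. The geometric content of Step~1 is a one-line diameter estimate, and the probabilistic input in Step~3 is the standard Poisson void formula. The only mild subtlety is Step~2, which converts the possibly unbounded empty disk produced in Step~1 into an empty disk of prescribed radius whose center ranges over a bounded region suitable for a discretisation argument.
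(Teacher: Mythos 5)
Your proof is correct, and it follows essentially the same approach as the paper: the event $\{R(B_n) > n\}$ forces a Poisson-empty disk of radius on the order of $n$ to overlap a neighborhood of $B_n$, and one concludes by discretizing and invoking the Poisson void probability. The only differences are cosmetic. The paper covers $B_{2n}$ by a fixed number ($8^2$) of squares of side $n/4$ and argues that at least one must be empty, giving $8^2 e^{-(n/4)^2}$, whereas you cover $B_n \oplus B(0,n/2)$ by $O(n^2)$ squares of constant side $\eta$ and show that a disk of radius $\approx n/2$ around some tile center must be empty, giving $O(n^2)e^{-\pi(n/2-\eta\sqrt 2)^2}$. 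Both yield a Gaussian bound, which is stronger than the stated $Ce^{-cn}$. Your Step 2 makes explicit the geometric reduction (translating the circumdisk's center to a bounded region while retaining radius $n/2$) that the paper leaves implicit; this is a helpful clarification, since the circumcenter $c$ itself may lie arbitrarily far from $B_n$. Two negligible imprecisions in your write-up: in Step 1 you do not actually need $q \in \partial D$ — any $q \in D$ with $d(q,B_n)>n$ gives $|p-q| < 2\rho$ and hence $\rho > n/2$; and in Step 3 the inscribed disk around the tile center has radius $n/2-\eta\sqrt 2/2$ rather than $n/2-\eta\sqrt 2$, but you only shrank further, so the bound remains valid.
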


\begin{proof}
The event $R(B_{n}) > n$ means that there exists a (random) disk with radius $n$ overlapping the set $B_n$ and empty of Poisson points. This forces at least one the $Q_i$'s to be empty of Poisson points where $Q_1,\ldots,Q_{8^2}$ are congruent squares with size $n/4$ covering $B_{2n}$ (i.e. each $Q_i$ is a translated of $B_{n/4}$). Henceforth,
\[
\mathbb{P} ( R(B_{n}) > n ) \leq \sum_{i=1}^{8^2} \mathbb{P} ( \bfX \cap Q_i = \emptyset ) \leq 8^2 e^{-(n/4)^2}
\]
from which (\ref{StabExpo}) follows.
\end{proof}

This next result is an independence property satisfied by the connectivity graph $\bdcalG_{p,\lambda,r}$ on subsets far enough from each other and provided their stabilization radii are well controlled.

\begin{lem}
\label{lem:StabIndependence}
Let $m>0$ be an integer and $\rho>0$ be a real number. Let us consider some Borel sets $A_{1},\ldots,A_{m}$ such that $d(A_i , A_j) \geq 2\rho$, for $1 \leq i\not= j\leq m$, and any family of positive, bounded r.v.'s $\chi_1,\ldots,\chi_m$ where each $\chi_i$ is measurable w.r.t. $\bdcalG_{p,\lambda,r} \cap A_{i}$, i.e. w.r.t. $(\Lambda_{A_i}, \bfX_p \cap A_i, \bfY \cap A_i)$. Then, the following equality holds:
\[
\bbE \Big[ \prod_{i=1}^{m} \chi_{i} \mathds{1}_{R(A_{i}) < \rho} \Big] = \prod_{i=1}^{m} \bbE \Big[ \chi_{i} \mathds{1}_{R(A_{i}) < \rho} \Big] ~.
\]
\end{lem}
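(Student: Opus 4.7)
The plan is to reduce the independence statement to the classical fact that a marked Poisson (and a Cox) point process restricted to pairwise disjoint Borel sets are independent. The role of the indicators $\mathbf{1}_{R(A_i)<\rho}$ is precisely to localize each $\chi_i$ to a domain of the form $U_i := A_i \oplus B(0,\rho)$, after which the hypothesis $d(A_i,A_j)\geq 2\rho$ ensures that the $U_i$'s are pairwise disjoint (since $z\in U_i\cap U_j$ would yield $d(A_i,A_j)\leq 2\rho$, and the possibly non-empty boundary is a null set for the PPP).

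First I would establish the following measurability claim: for each $i$, the random variable $\chi_i \mathbf{1}_{R(A_i)<\rho}$ is measurable with respect to $(\bar{\bfX}\cap U_i,\,\bar{\bfY}\cap U_i)$. The event $\{R(A_i)<\rho\}$ is itself measurable with respect to $\bfX\cap U_i$ by the defining property of the stabilization radius recalled in Section \ref{sect:Stabilization}: on this event every Delaunay triangle overlapping $A_i$ has its circumscribed disk inside $U_i$, so these triangles can be reconstructed as triples in $\bfX\cap U_i$ whose circumscribed disk is contained in $U_i$ and empty of points of $\bfX\cap U_i$. Consequently, on $\{R(A_i)<\rho\}$, the set $\bfT\cap A_i$ and the random measure $\Lambda_{A_i}$ are determined by $\bfX\cap U_i$, hence by $\bar{\bfX}\cap U_i$; the Bernoulli thinning $\bfX_p\cap A_i$ together with its marks $(\mathcal{E}_{x,k})_k$ is then measurable w.r.t.\ $\bar{\bfX}\cap U_i$. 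Finally, since $\bfY$ is supported by $\bfT$, one has $\bar{\bfY}\cap A_i=\bar{\bfY}\cap(\bfT\cap A_i)\subset \bar{\bfY}\cap U_i$. Putting these observations together yields the claim.

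Second, I would derive the joint independence of the $m$ pairs $(\bar{\bfX}\cap U_i,\bar{\bfY}\cap U_i)$. The marked PPP $\bar{\bfX}$ restricted to the disjoint sets $U_1,\ldots,U_m$ gives independent marked Poisson processes. Conditionally on $\bfX$, the marked Cox process $\bar{\bfY}$ is a PPP on $\bfT$ with intensity $\lambda$ and i.i.d.\ exponential marks, so its restrictions to the disjoint $U_i$'s are conditionally independent. Conditioning on $\bfX$, writing the expectation of the product as
\[
\mathbb{E}\!\left[\prod_{i=1}^m \chi_i \mathbf{1}_{R(A_i)<\rho}\right]=\mathbb{E}\!\left[\prod_{i=1}^m \mathbb{E}\!\left[\chi_i \mathbf{1}_{R(A_i)<\rho}\,\big|\,\bfX\right]\right],
\]
and using that the inner conditional expectation is a function of $\bar{\bfX}\cap U_i$ only (by the measurability claim), the independence of the $\bar{\bfX}\cap U_i$'s as marked Poisson processes lets us factorize the outer expectation, and a symmetric backward conditioning gives the desired product formula.

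The only delicate point I expect is the first step, namely a clean justification that on $\{R(A_i)<\rho\}$ the entire piece $\bdcalG_{p,\lambda,r}\cap A_i$ (not just the triangulation) is measurable with respect to the marked processes restricted to $U_i$; this requires observing that any edge of $\bfT$ hitting $A_i$ has both endpoints in $U_i$ on this event, so that the associated marks used to define connections are read from $\bar{\bfX}\cap U_i$ and $\bar{\bfY}\cap U_i$. The rest reduces to standard manipulations on marked Poisson and Cox processes on disjoint domains.
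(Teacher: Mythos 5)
Your proof is correct and follows essentially the same route as the paper's: condition on $\bfX$, factor the conditional expectation using conditional independence of $(\bfX_p,\bfY)$ and the marks over the disjoint $A_i$'s, use the stabilization event to localize the resulting $\sigma(\bfX)$-measurable quantities to $A_i\oplus B(0,\rho)$, invoke spatial independence of the Poisson process over these (essentially disjoint) regions, and conclude by the tower property. Your additional care about the measurability of $\chi_i\mathds{1}_{R(A_i)<\rho}$ w.r.t.\ the processes restricted to $A_i\oplus B(0,\rho)$, and about the null overlap when $d(A_i,A_j)=2\rho$, only makes explicit what the paper asserts with ``by construction of $R(A_i)$''.
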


\begin{proof}
First, we use that conditionally to $\bfX$ the processes $\bfX_p$ and $\bfY$ restricted to disjoint sets-- the $A_i$'s --provides independent r.v.'s:
\[
\bbE \Big[ \prod_{i=1}^{m} \chi_{i} \, \big| \, \bfX \Big] = \prod_{i=1}^{m} \bbE \big[ \chi_{i} \, \big| \, \bfX \big] ~.
\]
By construction of $R(A_{i})$, the r.v. $\mathds{1}_{R(A_{i}) < \rho} \, \bbE \big[ \chi_{i} \, \big| \, \bfX \big]$ only depends on the PPP $\bfX$ through the set $A_i \oplus B(0,\rho)$. Since the $A_i$'s are at distance $2\rho$ (at least) from each other, the independence property of the PPP $\bfX$ allows us to write:
\begin{eqnarray*}
\bbE \Big[ \prod_{i=1}^{m} \chi_{i} \mathds{1}_{R(A_{i}) < \rho} \Big] & = & \bbE \Big[ \mathds{1}_{R(A_{i}) < \rho} \, \bbE \Big[ \prod_{i=1}^{m} \chi_{i} \, \big| \, \bfX \Big] \Big] \\
& = & \bbE \Big[ \prod_{i=1}^{m} \mathds{1}_{R(A_{i}) < \rho} \, \bbE \big[ \chi_{i} \, \big| \, \bfX \big] \Big] \\
& = & \prod_{i=1}^{m} \bbE \Big[ \mathds{1}_{R(A_{i}) < \rho} \, \bbE \big[ \chi_{i} \, \big| \, \bfX \big] \Big] \\
& = & \prod_{i=1}^{m} \bbE \Big[ \chi_{i} \mathds{1}_{R(A_{i}) < \rho} \Big] ~.
\end{eqnarray*}
\end{proof}

\subsection{Proof of Item 1}
\label{ssec:highintensity}

In the previous section, we have explained that the connectivity graph $\bdcalG_{p,0,\infty}$ coincides with the (independent) site percolation process on the Delaunay triangulation $\bfT$, also known in the literature as the Voronoi percolation model (in which any Voronoi cell is colored black when its center is open). Since \cite{bollobas2006critical,duminil2019exponential}, it is known that this model does not percolate with probability $1$ for any $p \in [0,1/2]$. Hence, this prevents $\bdcalG_{p,\lambda,r}$ to percolate whatever the values of $\lambda,r$ (while $p \in [0,1/2]$), i.e. $\lambda_{c}(p,r) = \infty$.\\

In the following, our goal is to prove that $\lambda_{c}(p,r) < \infty$ for any given $p > 1/2$ and $r \in (0,\infty]$. Hence, let us pick such parameters $p > 1/2$ and $r \in (0,\infty]$: it suffices to prove that percolation occurs for $\bdcalG_{p,\lambda,r}$ when $\lambda$ is large enough. Our strategy consists in discretizing our model and comparing it to a supercritical site percolation process. Such a renormalization argument is classic in Percolation theory and used in particular in \cite{le2020crowd}.

To do it, let us consider for any $z \in \mathbb{Z}^{2}$ the events 
\begin{itemize}
\item $\calD_{n}(z) := \{ R(B_{3n}(nz)) < n \}$ where the stabilization radii are defined in Section \ref{sect:Stabilization};
\item $\calC_{n,\lambda,r}(z)$ interpreted as '$\bdcalG_{p,\lambda,r}$ contains a cycle included in $B_{3n}(nz)$ and surrounding $B_{n}(nz)$'. See Fig. \ref{fig:eventngood}.
\end{itemize}
By planarity of the PDT $\bfT$, $\calC_{n,\lambda,r}(z)$ is also characterized by its complementary event, i.e. there is a continuous path joining $S_{n}(nz)$ to $S_{3n}(nz)$ in $\bfT\!\setminus\!\bdcalG_{p,\lambda,r}$.

\begin{figure}[h!]
\centering 
\includegraphics[scale=0.4]{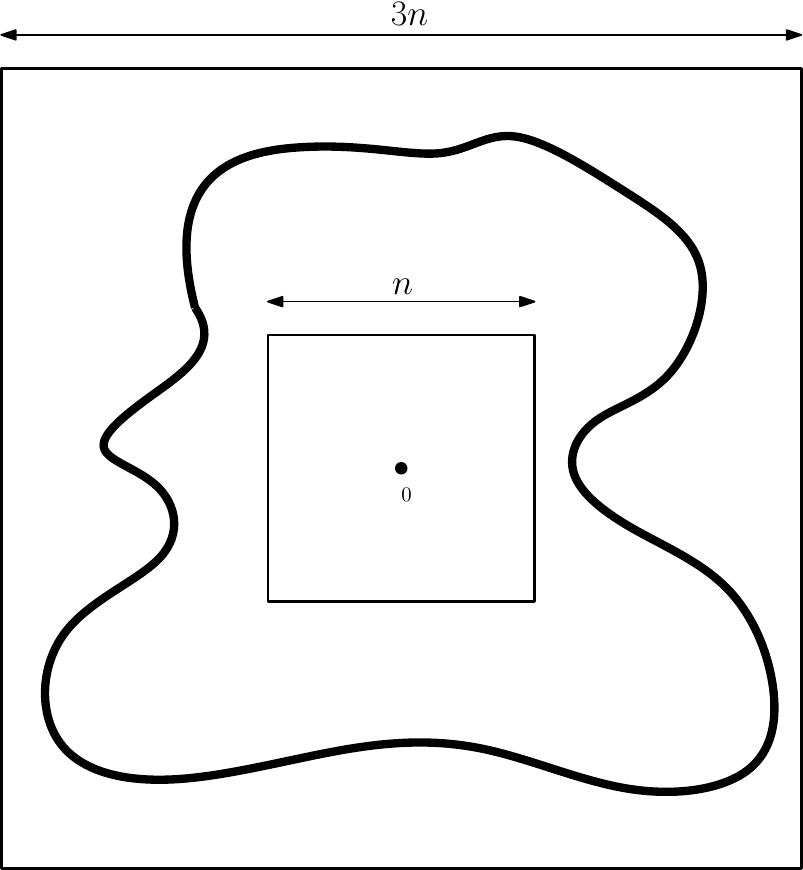}
\caption{The event $\calC_{n,\lambda,r}(0)$, merely denoted by $\calC_{n,\lambda,r}$, relative to $B_{3n}$ is depicted.}
\label{fig:eventngood}
\end{figure}

A site $z \in \bbZ^2$ is said \emph{$n$-good} if the event $\calD_{n}(z) \cap \calC_{n,\lambda,r}(z)$ occurs. Thus, we consider the site percolation process $\{\zeta_{n,z} : z \in \bbZ^2\}$ defined by
\[
\zeta_{n,z} := \mathds{1}_{\mbox{$z$ is $n$-good}} ~.
\]
The discrete random field $\{\zeta_{n,z} : z \in \bbZ^2\}$ is stationary w.r.t.\, translations of $\bbZ^2$ and its percolation (on $\bbZ^2$, for the supremum metric $\|\cdot\|_\infty$) clearly implies that of the connectivity graph $\bdcalG_{p,\lambda,r}$. Indeed, given two $n$-good sites $z,z' \in \bbZ^2$ with $\| z - z' \|_\infty = 1$, their circles in $\bdcalG_{p,\lambda,r}$ whose existence is ensured by $\calC_{n,\lambda,r}(z)$ and $\calC_{n,\lambda,r}(z')$, necessarily overlap. In other words, an unbounded connected component of $n$-good sites provides an unbounded connected component in $\bdcalG_{p,\lambda,r}$.

Henceforth, it is enough to prove that $\{\zeta_{n,z} : z \in \bbZ^2\}$ percolates for $p > 1/2$, $r \in (0,\infty]$ and $\lambda$ is large enough. In this objective, the two following ingredients, namely Lemmas \ref{lem:dependentsites} and \ref{lem:probangoodhighintensity}, will allow us to compare $\{\zeta_{n,z} : z \in \bbZ^2\}$ to a supercritical site percolation process.

\begin{lem}
\label{lem:dependentsites}
W.r.t. the supremum metric $\|\cdot\|_\infty$, the discrete random field $\{\zeta_{n,z} : z \in \bbZ^2\}$ is $4$-dependent.
\end{lem}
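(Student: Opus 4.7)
The plan is to deduce the lemma directly from Lemma \ref{lem:StabIndependence}. Set $A_z := B_{3n}(nz)$ and $\chi_z := \mathds{1}_{\calC_{n,\lambda,r}(z)}$ for each $z \in \bbZ^2$; then by definition of $\calD_n(z)$ one has $\zeta_{n,z} = \chi_z \cdot \mathds{1}_{R(A_z) < n}$. Because $\calC_{n,\lambda,r}(z)$ asks for a cycle of the connectivity graph lying entirely inside $B_{3n}(nz) = A_z$ and surrounding $B_n(nz)$, such a cycle only involves edges and vertices encoded in $\bdcalG_{p,\lambda,r} \cap A_z$. Consequently $\chi_z$ is measurable with respect to $(\Lambda_{A_z}, \bfX_p \cap A_z, \bfY \cap A_z)$, which is precisely the regularity required to apply Lemma \ref{lem:StabIndependence}.

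Next I will verify the geometric hypothesis with $\rho = n$. For two axis-aligned squares of side $3n$ centered at $nz_i$ and $nz_j$, a direct computation gives
\[
d(A_{z_i}, A_{z_j}) = n \, \max\!\big(0, \|z_i - z_j\|_\infty - 3\big).
\]
Hence $d(A_{z_i}, A_{z_j}) \geq 2n = 2\rho$ as soon as $\|z_i - z_j\|_\infty \geq 5$, i.e.\ as soon as the sites $z_i$ lie at pairwise $\|\cdot\|_\infty$-distance strictly greater than $4$.

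Given such a family $z_1, \ldots, z_m$, applying Lemma \ref{lem:StabIndependence} to the bounded $\{0,1\}$-valued variables $\chi_{z_1}, \ldots, \chi_{z_m}$ yields
\[
\bbE\Big[\prod_{i=1}^m \zeta_{n,z_i}\Big] = \prod_{i=1}^m \bbE\big[\zeta_{n,z_i}\big].
\]
The same factorization holds for every subfamily (the distance hypothesis is inherited by subsets), and for $\{0,1\}$-valued random variables the factorization of every mixed moment is equivalent to full independence by linearity of expectation and inclusion-exclusion applied to the indicators $\zeta_{n,z_i}^{a_i}(1-\zeta_{n,z_i})^{1-a_i}$. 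We conclude that $\zeta_{n,z_1}, \ldots, \zeta_{n,z_m}$ are independent, which is precisely the $4$-dependence of the field. The argument is essentially a bookkeeping verification; all the probabilistic content is already packaged in Lemma \ref{lem:StabIndependence}, and the only place where the specific constant $4$ enters is the distance computation above.
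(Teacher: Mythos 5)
Your proof is correct and follows essentially the same route as the paper: same choice of $A_i = B_{3n}(nz_i)$, same $\rho = n$, same distance bound from $\|z_i - z_j\|_\infty \geq 5$, and the same appeal to Lemma~\ref{lem:StabIndependence}. You are actually a touch more careful than the paper on two small points: you take $\chi_z = \mathds{1}_{\calC_{n,\lambda,r}(z)}$ (which is genuinely measurable w.r.t.\ $\bdcalG_{p,\lambda,r}\cap A_z$) and let the stabilization indicator $\mathds{1}_{R(A_z)<n}$ be supplied by the lemma itself, rather than absorbing $\calD_n(z)$ into $\chi_i$ as the paper does; and you spell out why the factorization of mixed moments over all subfamilies upgrades to mutual independence for the $\{0,1\}$-valued $\zeta_{n,z_i}$, a step the paper leaves implicit.
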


The role of the event $\calD_{n}(z)$ is to make local the notion of $n$-goodness.
	
\begin{proof}
Let us prove that for any finite collection of sites $z_1,\dots,z_m \in \bbZ^2$ with $\| z_i - z_j \|_\infty > 4$, $\forall i \neq j$, the r.v.'s $\zeta_{n,z_1},\ldots,\zeta_{n,z_m}$ are mutually independent. Set $A_i := B_{3n}(nz_i)$. The inequality $\| z_i - z_j \|_\infty \geq 5$ means that $d(A_i , A_j) \geq 2n$. We can then apply Lemma \ref{lem:StabIndependence} with the r.v.
\[
\chi_i := \mathds{1}_{\calD_{n}(z_i) \cap \, \calC_{n,\lambda,r}(z_i)} ~,
\]
measurable w.r.t. $\bdcalG_{p,\lambda,r} \cap A_i$, to conclude.
\end{proof}

\begin{lem}
\label{lem:probangoodhighintensity}
The following limit holds:
\[
\lim_{n \to \infty} \lim_{\lambda \to \infty} \bbP (0 \text{\emph{ is $n$-good}} ) = 1 ~.
\]
\end{lem}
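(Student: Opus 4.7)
The plan is to decompose $\{0 \text{ is $n$-good}\} = \calD_n(0) \cap \calC_{n,\lambda,r}(0)$ and treat the two ingredients separately. Since $\bbP(\calD_n(0)) \to 1$ as $n \to \infty$ by Lemma \ref{lem:StabExpo}, it suffices to prove that
\[
\lim_{n \to \infty} \lim_{\lambda \to \infty} \bbP\big(\calD_n(0) \cap \calC_{n,\lambda,r}(0)\big) = 1.
\]
The strategy is, for fixed $n$, to send $\lambda \to \infty$ and compare $\bdcalG_{p,\lambda,r}$ to the limit graph $\bdcalG_{p,0,\infty}$ (in which two adjacent open crossroads are automatically connected), and then to invoke the sharp phase transition for Bernoulli site percolation on $\bfT$ at $p_c = 1/2$ to conclude that, for $p > 1/2$, $\bdcalG_{p,0,\infty}$ contains the desired surrounding cycle with probability tending to $1$.

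The key ingredient for the first step is an \emph{edge realization} claim: conditionally on $\bfX$ and $\bfX_p$, any fixed edge $\{x, x'\}$ of $\bfT$ of length $\ell < \infty$ with $x, x' \in \bfX_p$ is spanned by a connected path of $\bdcalG_{p,\lambda,r}$ with conditional probability tending to $1$ as $\lambda \to \infty$. To see this, order the Cox users on the edge as $y_1, \ldots, y_N$ with $N \sim \mathrm{Poisson}(\lambda \ell)$. Since $\mathcal{E}_{y_k} + \mathcal{E}_{y_{k+1}}$ follows a Gamma$(2,1)$ distribution, the probability that two consecutive users at distance $d$ fail to connect is $\bbP(\mathcal{E} + \mathcal{E}' \leq 2d/r) = O(d^2)$ as $d \to 0$. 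With $N$ of order $\lambda \ell$ and typical inter-user distances of order $1/\lambda$, a union bound controls the expected number of disconnected consecutive pairs by a quantity of order $\lambda \ell \cdot 1/\lambda^2 \to 0$; a similar estimate handles the endpoint connections $x \sim y_1$ and $y_N \sim x'$. On the event $\calD_n(0)$, edges of $\bfT$ overlapping $B_{3n}$ have length at most $2n$ and are finite in number, so a further union bound yields that, conditionally on $\calD_n(0)$ and on $(\bfX, \bfX_p)$, every edge of $\bfT$ in $B_{3n}$ with both endpoints open is simultaneously realized in $\bdcalG_{p,\lambda,r}$ with probability tending to $1$.

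In particular, any cycle of $\bdcalG_{p,0,\infty}$ in $B_{3n}$ around $B_n$ is then inherited by $\bdcalG_{p,\lambda,r}$, so dominated convergence gives
\[
\liminf_{\lambda \to \infty} \bbP\big(\calD_n(0) \cap \calC_{n,\lambda,r}(0)\big) \geq \bbP\big(\calD_n(0) \cap \calC_{n,0,\infty}(0)\big).
\]
To finish, $\bdcalG_{p,0,\infty}$ is exactly Bernoulli site percolation on $\bfT$, whose critical parameter is $p_c = 1/2$ with a sharp transition by \cite{bollobas2006critical, duminil2019exponential}; for $p > 1/2$ the closed sub-critical phase exhibits exponential decay, so closed crossings of the annulus $B_{3n}\!\setminus\!B_n$ become exponentially rare, and by planar duality for site percolation on a triangulation the open surrounding cycle in $\bdcalG_{p,0,\infty}$ exists with probability tending to $1$ as $n \to \infty$. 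Combined with Lemma \ref{lem:StabExpo}, this concludes the proof.

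The technical heart is the edge realization claim: one must simultaneously track the random user count $N$, the independent exponential marks, and the gap statistics. A robust way to sidestep delicate inter-point gap computations is to cut the edge into $\lceil \ell/\delta \rceil$ sub-segments of length $\delta$ and show that, with overwhelming probability as $\lambda \to \infty$, each sub-segment contains at least one user whose exponential mark exceeds $4\delta/r$, which then forces connection of consecutive users along the entire edge.
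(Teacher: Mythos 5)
Your proposal follows the same overall route as the paper: decompose $\{0 \text{ is } n\text{-good}\} = \calD_n \cap \calC_{n,\lambda,r}$, establish $\bbP(\calD_n \cap \calC_{n,0,\infty}) \to 1$ via stabilization and the sharp transition for Bernoulli site percolation on $\bfT$ at $1/2$, and reduce the inclusion $\calC_{n,0,\infty} \subset \calC_{n,\lambda,r}$ (on a good event) to showing that every edge of $\bfT$ in $B_{3n}$ with both endpoints open is entirely covered by $\bdcalG_{p,\lambda,r}$ with conditional probability tending to $1$ as $\lambda \to \infty$. The only genuine divergence lies in the proof of this edge-coverage claim. The paper's Lemma~\ref{lem:CoxHoleControl} settles it in two lines: almost surely some Cox user $y_{k_0}$ on the edge has $\frac{r}{2}\mathcal{E}_{k_0} \geq \ell$, so it covers the entire segment by itself, and a monotone limit in $\lambda$ finishes. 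Your first sketch, via $O(d^2)$ failure probabilities for consecutive gaps, is morally right but the casual ``$N \sim \lambda\ell$, gaps $\sim 1/\lambda$'' step hides the actual bookkeeping $\bbE\bigl[\sum_k d_k^2\bigr] = O(\ell/\lambda)$ that one must carry out rigorously (spacings are not uniformly of size $1/\lambda$). Your fallback sub-segment argument (each block of length $\delta$ holds a user with mark $\geq 4\delta/r$, plus a union bound) is correct and exploits the same unbounded-support property of the exponential law, but it is heavier than the paper's single-user observation, which renders the whole cutting into sub-segments unnecessary. So: correct, essentially the same strategy, but with a more laborious proof of the one auxiliary ingredient where the paper has a cleaner shortcut.
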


Before proving Lemma \ref{lem:probangoodhighintensity}, we first conclude the proof of Theorem \ref{thm:mainResult}, Item $1$. The stochastic domination result of Liggett et al \cite[Theorem 0.0]{liggett1997domination} asserts that the $4$-dependent random field $\{\zeta_{n,z} : z \in \bbZ^2\}$ is stochastically dominated from below by an independent Bernoulli site percolation process $\Theta_{p_0}$, for any $p_0 \in (0,1)$, provided $n,\lambda$ are large enough (depending on $p_0$). If the parameter $p_0 < 1$ is chosen (large enough) so that $\Theta_{p_0}$ a.s. percolates then the same holds for $\{\zeta_{n,z} : z \in \bbZ^2\}$ by stochastic domination. This finally implies the percolation of the connectivity graph $\bdcalG_{p,\lambda,r}$.

\begin{proof}{(of Lemma \ref{lem:probangoodhighintensity})} Recall that $p > 1/2$ and $r \in (0,\infty]$.  Since $\mathbb{P}(R(B_{n}) < n) \to 1$ as $n \to \infty$ by \eqref{StabExpo} then the same holds for $\mathbb{P}(\calD_{n}) = \mathbb{P}(R(B_{3n}) < n)$. In addition, according to \cite[Section 8]{bollobas2006critical}, $\mathbb{P}(\calC_{n,0,\infty})$ also tends to $1$ as $n \to \infty$-- it is crucially used here that $p > 1/2$. We then get:
\begin{equation}
\label{LimDnCn}
\lim_{n \to \infty} \mathbb{P} ( \calD_{n} \cap \calC_{n,0,\infty} ) = 1 ~.
\end{equation}

Let us now introduce the event $\calU_{n,\lambda}$ according to which any edge $\{x,y\}$ of the PDT $\bfT$ with $x,y \in \bfX_p \cap B_{3n}$ is covered by users:
\[
\calU_{n,\lambda} := \big\{ \mbox{for any edge $\{x,y\}$ of $\bfT$ with $x,y \in \bfX_p \cap B_{3n}$ then $[x,y] \subset \bdcalG_{p,\lambda,r}$} \big\} ~.
\]
Lemma \ref{lem:CoxHoleControl} below says that, for any given $n$, the conditional probability $\mathbb{P} (\calU_{n,\lambda} | \bfX , \bfX_p )$ a.s. converges to $1$ as $\lambda \to \infty$. Thus the Lebesgue theorem gives
\[
\mathbb{P} (\calU_{n,\lambda}) = \mathbb{E} \big[ \mathbb{P} (\calU_{n,\lambda} | \bfX , \bfX_p ) \big] \, \to \, 1 \; \mbox{ as } \; \lambda \to \infty ~.
\]
Combining with \eqref{LimDnCn}, we then obtain that
\[
\lim_{n \to \infty} \lim_{\lambda \to \infty} \bbP ( \calD_{n} \cap \calC_{n,0,\infty} \cap \calU_{n,\lambda} ) = 1 ~.
\]
Under the event $\calU_{n,\lambda}$, any path of $\bdcalG_{p,0,\infty}$ included in $B_{3n}$ is still a path of $\bdcalG_{p,\lambda,r}$. This means that $\calC_{n,0,\infty} \cap \calU_{n,\lambda} \subset \calC_{n,\lambda,r}$ and
\[
\bbP ( \calD_{n} \cap \calC_{n,0,\infty} \cap \calU_{n,\lambda} ) \leq \bbP ( \calD_{n} \cap \calC_{n,\lambda,r} ) = \bbP ( 0 \text{\emph{ is $n$-good}} )
\]
from which the searched result follows.
\end{proof}

\begin{lem}
\label{lem:CoxHoleControl}
For any $p,r,n > 0$,
\[
\mbox{a.s.} \; \lim_{\lambda \to \infty} \mathbb{P} (\calU_{n,\lambda} \,|\, \bfX , \bfX_p ) = 1 ~.
\]
\end{lem}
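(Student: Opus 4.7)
The plan is to reduce to a single edge via a union bound and then cover that edge by a chain of sufficiently well-marked users. First, observe that, conditionally on $\bfX$, the set of edges $\{x,y\}$ of $\bfT$ with both endpoints in $\bfX_p \cap B_{3n}$ is a.s.\ finite, since $\bfX \cap B_{3n}$ itself is. Working on this full-probability event and applying a union bound on the complement, it will suffice to prove that, for each such fixed edge,
\[
\bbP \bigl( [x,y] \subset \bdcalG_{p,\lambda,r} \,\bigm|\, \bfX, \bfX_p \bigr) \;\xrightarrow[\lambda\to\infty]{}\; 1.
\]

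So I fix such an edge and write $\ell := \|x-y\|$. Conditionally on $\bfX$, the restriction of $\bar{\bfY}$ to $[x,y]$ is a Poisson point process on $[x,y]$ of intensity $\lambda$ carrying i.i.d.\ $\mathcal{E}xp(1)$ marks, independent of $\bfX_p$ and of the crossroad marks. My goal is to cover $[x,y]$ by a chain of users whose marks are bounded below by $1$. Set $\delta := r/3$, partition $[x,y]$ into $K := \lceil \ell/\delta \rceil$ consecutive sub-intervals $I_1,\ldots,I_K$ each of length at most $\delta$, and let $A_i$ be the event that $I_i$ contains at least one point of $\bfY$ with exponential mark $\geq 1$. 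By Poisson thinning, the number of such \emph{good} users in $I_i$ is Poisson with mean $\lambda |I_i|/e$, and by independence on disjoint sub-intervals,
\[
\bbP\Bigl(\bigcap_{i=1}^K A_i \,\Bigm|\, \bfX\Bigr) \;=\; \prod_{i=1}^K \bigl(1 - e^{-\lambda |I_i|/e}\bigr) \;\xrightarrow[\lambda\to\infty]{}\; 1.
\]

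On $\bigcap_i A_i$, I pick one good user $u_i \in I_i$ for each $i$. Consecutive good users satisfy $\|u_i - u_{i+1}\| \leq 2\delta < r \leq \tfrac{r}{2}(\mathcal{E}_{u_i} + \mathcal{E}_{u_{i+1}})$, so $u_i \sim u_{i+1}$; and, using $r'/2 = r$ together with $\mathcal{E}_{x,k} \geq 0$ (where $k$ is the local index at $x$ of the edge $\{x,y\}$),
\[
\|x - u_1\| \;\leq\; \delta \;<\; \tfrac{r}{2} \;\leq\; \tfrac{r}{2}\mathcal{E}_{u_1} \;\leq\; r\,\mathcal{E}_{x,k} + \tfrac{r}{2}\mathcal{E}_{u_1},
\]
whence $x \sim u_1$, and symmetrically $u_K \sim y$. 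The segments $[x,u_1], [u_1,u_2], \ldots, [u_K, y]$ therefore all belong to $\bdcalG_{p,\lambda,r}$, and being monotone along $[x,y]$ their union equals $[x,y]$, so $[x,y] \subset \bdcalG_{p,\lambda,r}$ on $\bigcap_i A_i$.

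I do not anticipate a serious obstacle here: the argument is a routine density-plus-union-bound estimate, the exponential marks providing a uniform lower bound on the connection range at each selected user. The only mild care required is that the mark threshold (here $1$) be fixed independently of $\lambda$ so that the thinned Poisson intensity $\lambda|I_i|/e$ still diverges, and that $\delta$ be chosen so that $2\delta < r$.
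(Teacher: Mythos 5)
Your proof is correct, and it takes a genuinely different route from the paper's. The paper also reduces to a single edge by a union bound, but then invokes the \emph{unboundedness} of the exponential mark distribution: almost surely, one of the i.i.d.\ marks eventually satisfies $\tfrac{r}{2}\mathcal{E}_{k_0} \geq \ell$, so a \emph{single} user covers the whole edge, and the conclusion follows by sending the number of Poisson points (equivalently, $\lambda$) to infinity. Your argument instead partitions $[x,y]$ into sub-intervals of length at most $r/3$, applies Poisson thinning to wait for a user with mark $\geq 1$ in each sub-interval, and chains them. The two proofs buy slightly different things: the paper's is shorter once one grants that a single big interval suffices, but it relies on the tail of the mark distribution being heavy enough to dominate the (random, a priori unbounded) edge length $\ell$; yours only needs $\bbP(\mathcal{E}\geq 1)>0$, so it would survive replacing the exponential by any mark distribution with some mass above a fixed positive level, and it makes the Poissonization explicit (via thinning) where the paper leaves the passage from i.i.d.\ uniform triples to the Poisson number of points slightly implicit. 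Both arguments correctly note, as you do, that the crossroad marks $\mathcal{E}_{x,k}\geq 0$ come for free, so connectivity of the endpoints $x,y$ to the chain is immediate once the extremal sub-intervals contain a good user.
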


\begin{proof}
We work conditionally to the urban media and the process of users at crossroads, i.e. $\bfX$ and $\bfX_p$. Let us first write
\[
\mathbb{P} \big( \calU_{n,\lambda}^\complement \,|\, \bfX , \bfX_p \big) \leq \sum_{\{x,y\} \in E} \mathbb{P} ( [x,y] \not\subset \bdcalG_{p,\lambda,r} \,|\, \bfX , \bfX_p )
\]
where $E$ denotes the set of edges $\{x,y\}$ of the Delaunay triangulation $\bfT$ such that $x,y \in \bfX_p \cap B_{3n}$. Since $E$ is a.s. finite it suffices to show that each term of the above sum a.s. tends to $0$ as $\lambda\to\infty$. Let $\{x,y\}$ be one of them: only its length, say $\ell$, really matters (where $\ell \leq 3 \sqrt{2} n$ since $x,y \in B_{3n}$).

Let us consider a sequence $(U_k)_{k\geq 1}$ of i.i.d. r.v.'s uniformly distributed on $[0,\ell]$ and a second sequence $(\mathcal{E}_k)_{k\geq 1}$ of i.i.d. r.v.'s with law the exponential distribution with rate $1$, also independent with the $(U_k)$. With probability $1$, at least one of the $(\mathcal{E}_k)$, say $\mathcal{E}_{k_0}$, satisfies $\frac{r}{2} \mathcal{E}_{k_0} \geq \ell$. This implies that a.s.
\[
[0,\ell] \subset \Big[ U_{k_0} \pm \frac{r}{2} \mathcal{E}_{k_0} \Big] \subset \bigcup_{k\geq 1} \Big[ U_{k} \pm \frac{r}{2} \mathcal{E}_{k} \Big] ~.
\]
We can then conclude that, a.s. on $\bfX, \bfX_p$,
\begin{eqnarray*}
1 & = & \lim_{K\to\infty} \mathbb{P} \Big( [0,\ell] \subset \bigcup_{k \leq K} \Big[ U_{k} \pm \frac{r}{2} \mathcal{E}_{k} \Big] \Big) \\
& = & \lim_{\lambda\to\infty} \mathbb{P} ( [x,y] \subset \bdcalG_{p,\lambda,r} \,|\, \bfX , \bfX_p )
\end{eqnarray*}
from which the searched result follows.
\end{proof}

\subsection{Proof of Item 3}
\label{sect:ProofItem3}

In this section, we prove that $(p_{c}(r))_{r\geq 0}$ tends to $1/2$ as $r \to \infty$. By definition of the critical threshold $p_c(r)$, it is sufficient to state that for any $p>\frac{1}{2}$, but thought as close to $1/2$, there exists $r$ large enough such that the connectivity graph $\bdcalG_{p,0,r}$ percolates (i.e. without the help of users on streets), implying that $\lambda_c(p,r) = 0$.

The proof mainly follows the same lines as the one of Item $1$ in Section \ref{ssec:highintensity} with the only difference that Lemma \ref{lem:probangoodhighintensity} has to be replaced with
\begin{equation}
\label{NewLem2}
\lim_{n \to \infty} \lim_{r \to \infty} \bbP (0 \mbox{ is $n$-good}) = 1
\end{equation}
where the definition of $n$-goodness is almost the same; $z \in \mathbb{Z}^2$ is $n$-good iff the event $\calD_{n}(z) \cap \calC_{n,0,r}(z)$ holds. Combining (\ref{NewLem2}) with Lemma \ref{lem:dependentsites}, we can once again apply the stochastic domination result of Liggett et al \cite[Theorem 0.0]{liggett1997domination} stating that the $4$-dependent random field $\{\zeta_{n,z} : z \in \bbZ^2\}$, with $\zeta_{n,z} = \mathds{1}_{z \mbox{ is $n$-good}}$, percolates a.s. which implies that $\bdcalG_{p,0,r}$ percolates too.

In order to prove (\ref{NewLem2}), we introduce the event
\[
\mathcal{V}_{n,r} := \big\{ \mbox{for any edge $\{x,y\}$ of $\bfT$ with $x,y \in \bfX_p \cap B_{3n}$ then $[x,y] \subset \bdcalG_{p,0,r}$} \big\} ~.
\]
On $\mathcal{V}_{n,r}$, for any edge $\{x,y\}$ of $\bfT$ included in $B_{3n}$, both users $x,y \in \bfX_p$ at the extremities are connected without the help of any other user. Thus we will prove a similar result to Lemma \ref{lem:CoxHoleControl}: for any $n$, a.s.
\begin{equation}
\label{Limit-Vnr}
\lim_{r \to \infty} \mathbb{P} (\mathcal{V}_{n,r} \,|\, \bfX , \bfX_p ) = 1 ~.
\end{equation}
On the one hand, (\ref{Limit-Vnr}) implies that $\mathbb{P}(\mathcal{V}_{n,r}) \to 1$ as $r \to \infty$ by the Lebesgue theorem and, on the other hand, we still have that $\mathbb{P}(\calD_{n} \cap \calC_{n,0,\infty}) \to 1$ as $n \to \infty$ (see Section \ref{ssec:highintensity}). We can then conclude:
\[
\bbP ( 0 \text{ is $n$-good} ) = \bbP ( \calD_{n} \cap \calC_{n,0,r} ) \geq \bbP ( \calD_{n} \cap \calC_{n,0,\infty} \cap \mathcal{V}_{n,r} )
\]
which tends to $1$ as $n,r \to \infty$.

It then remains to prove (\ref{Limit-Vnr}). We proceed as in the proof of Lemma \ref{lem:CoxHoleControl}:
\[
\mathbb{P} \big( \mathcal{V}_{n,r}^\complement \,|\, \bfX , \bfX_p \big) \leq \sum_{\{x,y\} \in E} \mathbb{P} ( [x,y] \not\subset \bdcalG_{p,0,r} \,|\, \bfX , \bfX_p )
\]
where $E$ still denotes the set of edges $\{x,y\}$ of the Delaunay triangulation $\bfT$ such that $x,y \in \bfX_p \cap B_{3n}$. Since $E$ is a.s. finite it suffices to show that each conditional probability $\mathbb{P}([x,y] \not\subset \bdcalG_{p,0,r} | \bfX,\bfX_p)$ tends to $0$ as $r\to\infty$. Let $\{x,y\}$ be an edge of $E$: only its length $\ell$ (smaller than $3 \sqrt{2} n$) really matters. Now, using an exponential r.v. $\mathcal{E}$ with rate $1$,
\[
\mathbb{P} ( [x,y] \not\subset \bdcalG_{p,0,r} \,|\, \bfX , \bfX_p ) \leq \mathbb{P} \big( \frac{r}{2} \mathcal{E} \leq 3 \sqrt{2} n \big) \, \to \, 0
\]
as $r \to \infty$ (for a fixed $n$). This proves (\ref{Limit-Vnr}).

\subsection{Proof of Item 2}

Recall that $(p_{c}(r))_{r \geq 0}$ is a non-increasing sequence and $r^\ast$ is defined as the value at which it forks from the horizontal line $p = 1$:
\[
r^{\ast} = \sup \{ r \geq 0 : p_{c}(r) = 1\} \in [0 , \infty] ~.
\]
The finiteness of $r^{\ast}$ is given by Theorem \ref{thm:mainResult}, Item $3$ since $(p_{c}(r))_{r\geq 0}$ tends to $1/2$ as $r \to \infty$.\\

It then remains to prove that $r^{\ast} > 0$. To do it, let us prove the existence of a small $r > 0$ such that the connectivity graph $\bdcalG_{1,0,r}$-- in which there is no users on streets but all crossroads are open --does not percolate with probability $1$, meaning that $p_c(r) = 1$ and then $r^\ast \geq r$.\\

\noindent
\textbf{Comparison to the random set of grains $\Sigma_{1/n}$.} Given $x \in \bfX$, we define the grain $\Star(x,r)$ as follows. Let $y_1,\ldots,y_{\deg(x)}$ be the neighbors of $x$ in the Delaunay triangulation, numbered in the counterclockwise sense and from the semi-line $[x,x+(1,0))$. Recall that $\frac{r}{2}\mathcal{E}_{x,k}$ is the range of connection from $x$ along the segment $[x,y_k]$. Hence, we set
\[
y_k(x) := \left\lbrace
\begin{array}{ll}
x + \frac{r}{2}\mathcal{E}_{x,k} \frac{(y_k - x)}{|y_k - x|} & \mbox{if } \, \frac{r}{2}\mathcal{E}_{x,k} < |y_k - x| \\
y_k  & \mbox{otherwise.}
\end{array}
\right.
\]
Thus the linear piecewise closed curve joining the extremities $y_1(x),y_2(x),\ldots,y_{\deg(x)}(x)$ and at last $y_1(x)$ delimits a compact set, denoted by $\Star(x,r)$. Remark that the grains $\Star(\cdot,r)$ are dependent from each other (through the Delaunay triangulation) and are decreasing with $r$ (in the sense of inclusion). Besides, the connectivity graph $\bdcalG_{1,0,r}$ is included in the random set
\[
\Sigma_r := \bigcup_{x \in \bfX} \Star(x,r)
\]
so that it is sufficient to prove that $\Sigma_{1/n}$ does not percolate for $n$ large enough.

In this goal, we are going to apply the same strategy as in Sections \ref{ssec:highintensity} and \ref{sect:ProofItem3}. For any $z \in \mathbb{Z}^2$, let us define the event $\mathcal{C}_n(z)$ as follows: there exists a continuous path in the plane included in $B_{3n}(nz)$ but surrounding $B_{n}(nz)$, and avoiding the set $\Sigma_{1/n}$. As before, we say that $z$ is \textit{$n$-good} if $\mathcal{D}_n(z) \cap \mathcal{C}_n(z)$ occurs where $\calD_{n}(z)$ still denotes the event $\{ R(B_{3n}(nz)) < n \}$ and we set $\zeta_{n,z} = \mathds{1}_{\mbox{$z$ is $n$-good}}$. Because the percolation of the stationary, discrete random field $\{\zeta_{n,z} : z \in \bbZ^2\}$ (w.r.t. the supremum metric $\|\cdot\|_\infty$) implies the absence of percolation for $\Sigma_{1/n}$, we are now studying the process $\{\zeta_{n,z} : z \in \bbZ^2\}$.\\

\noindent
\textbf{Percolation of the process $\{\zeta_{n,z} : z \in \bbZ^2\}$.} Let  $z \in \mathbb{Z}^2$. Assume that the grain $\Star(x,1/n)$, with $x \in \bfX$, overlaps $B_{3n}(nz)$ and then is possibly relevant for the occurrence of the event $\mathcal{C}_n(z)$. Precisely we assume that the subset $S$ of $\Star(x,1/n)$, delimited by $x, y_i(x), y_{i+1}(x)$, with $i < \deg(x)$, overlaps $B_{3n}(nz)$. By construction, $S$ is included in the Delaunay triangle made up with Poisson points $x, y_i, y_{i+1}$, say $T$. The triangle $T$ overlaps $B_{3n}(nz)$ and, under the event $\calD_{n}(z) = \{ R(B_{3n}(nz)) < n \}$, it only depends on the PPP $\bfX$ inside $B_{3n}(nz) \oplus B(0,n)$. So, using the same proof as that of Lemma \ref{lem:dependentsites} (itself using Lemma \ref{lem:StabIndependence}), we get that the random field $\{\zeta_{n,z} : z \in \bbZ^2\}$ is $4$-dependent w.r.t. the supremum metric $\|\cdot\|_\infty$.

It is worth pointing out here that, under $\calD_{n}(z)$, the subset $S$ of $\Star(x,1/n)$ is stabilized by the PPP inside $B_{3n}(nz) \oplus B(0,n)$, but not the whole grain $\Star(x,1/n)$. Indeed, it is possible that $\Star(x,1/n)$ contains a very large subset, say delimited by $x, y_j(x), y_{j+1}(x)$, avoiding $B_{3n}(nz)$ and exceeding from $B_{3n}(nz) \oplus B(0,n)$. This justifies the use of grains $\Star(x,1/n)$ instead of (larger) balls $B(x, \frac{1}{2n} L_x)$ where $L_x := \max_{1\leq k\leq \deg(x)} \mathcal{E}_{x,k}$.

Henceforth, $\{\zeta_{n,z} : z \in \bbZ^2\}$ being $4$-dependent, it is sufficient to prove that
\begin{equation}
\label{Item2Good}
\lim_{n \to \infty} \bbP (0 \mbox{ is $n$-good}) = 1
\end{equation}
and to apply once again the stochastic domination result of Liggett et al \cite[Theorem 0.0]{liggett1997domination} to establish that the site percolation process $\{\zeta_{n,z} : z \in \bbZ^2\}$ percolates, which concludes the proof of Theorem \ref{thm:mainResult}, Item $2$.

Since $\bbP(\calD_{n})$ tends to $1$ by Lemma \ref{lem:StabExpo}, we have to show that the probability of $\mathcal{C}_{n} := \mathcal{C}_{n}(0)$ also tends to $1$ in order to get (\ref{Item2Good}):

\begin{lem}
\label{lem:Cn}
The probability of the event $\mathcal{C}_{n}$ tends to $1$ as $n$ tends to infinity.
\end{lem}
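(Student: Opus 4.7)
The plan is to compare the random set $\Sigma_{1/n}$ with a Boolean model of small deterministic radius on $\bfX$, chosen small enough to be strictly subcritical. Then a classical annular duality converts the absence of a crossing cluster of $\Sigma_{1/n}$ into the existence of a surrounding loop in its complement, i.e.\ the event $\mathcal{C}_n$.

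\textbf{Step 1 (typical grains are small).} Fix a constant $\delta\in(0,1)$. Introduce the event
\[
\mathcal{S}_n(\delta) := \big\{ \forall\, x \in \bfX \cap B_{3n+1},\ \forall\, k \leq \deg(x),\ \mathcal{E}_{x,k} < 2n\delta \big\} ~.
\]
On $\mathcal{S}_n(\delta)$ every grain $\Star(x,1/n)$ centered in $B_{3n+1}$ has extent at most $\delta$, so it is contained in $B(x,\delta)$. Since the marks $(\mathcal{E}_{x,k})$ are independent of $\bfX$, conditioning on $\bfX$ gives the bound
\[
\bbP\big(\mathcal{S}_n(\delta)^{\complement}\big)\ \leq\ \bbE\Big[\sum_{x\in\bfX\cap B_{3n+1}}\deg(x)\Big]\,e^{-2n\delta} ~.
\]
By Mecke's formula together with Euler's relation (the Palm mean degree of a vertex of the PDT is $6$), the expectation above equals $6\,|B_{3n+1}| = O(n^2)$. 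Hence $\bbP(\mathcal{S}_n(\delta)^{\complement}) = O(n^2 e^{-2n\delta}) \to 0$.

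\textbf{Step 2 (domination by a subcritical Boolean model).} Choose now $\delta>0$ small enough so that the Gilbert disc model $\bigcup_{x \in \bfX} B(x,\delta)$ on the unit intensity PPP is strictly subcritical; this is possible because the critical radius of the Boolean model on $\mathbb{R}^2$ is positive \cite{meester1996continuum}. Under $\mathcal{S}_n(\delta)$ one has
\[
\Sigma_{1/n}\cap B_{3n}\ \subset\ \bigcup_{x\in\bfX\cap B_{3n+\delta}}B(x,\delta) ~.
\]
Classical subcritical decay for the Boolean model yields constants $C,c>0$ such that the cluster of a typical point has diameter at least $t$ with probability at most $Ce^{-ct}$. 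By the Mecke formula, the expected number of Poisson points lying in a thin shell around $S_n$ whose cluster reaches $S_{3n}$ is at most $O(n)\cdot Ce^{-cn}$, which tends to $0$. In particular, with probability tending to $1$, no connected component of the Boolean cover (hence of $\Sigma_{1/n}\cap(B_{3n}\setminus B_n^{\circ})$) meets both $S_n$ and $S_{3n}$.

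\textbf{Step 3 (topological duality on the annulus) and conclusion.} A standard planar duality argument on the annulus $B_{3n}\setminus B_n^{\circ}$ (as used e.g. in \cite{bollobas2006critical} for Voronoi percolation) asserts that the absence of a connected subset of a closed set $F\subset B_{3n}\setminus B_n^{\circ}$ meeting both boundary circles is equivalent to the existence of a continuous loop in $(B_{3n}\setminus B_n^{\circ})\setminus F$ homotopic to $S_n$, that is, surrounding $B_n$. Applied to $F=\Sigma_{1/n}\cap(B_{3n}\setminus B_n^{\circ})$, Steps~1 and~2 give $\bbP(\mathcal{C}_n)\to 1$.

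\textbf{Main obstacle.} The only delicate point is Step~1: the degree of a Poisson-Delaunay vertex is unbounded, so the naive union bound over $k\leq\deg(x)$ must be carried out via the Mecke formula, combined with the fact that the Palm mean degree equals $6$. Once the small-grain event has been secured, Step~2 reduces the problem to a standard subcritical Boolean-model estimate, and Step~3 is a routine application of planar duality in the annulus.
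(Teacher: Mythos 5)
Your proof takes a genuinely different and arguably simpler route than the paper's. The paper bounds the maximal Delaunay degree in the relevant box by $C\log n$ (via Bonnet--Chenavier), compares $\Sigma_{1/n}$ to a Poisson Boolean model with \emph{random} radii $L_{n,x}$ (the maximum of $C\log n$ rescaled exponentials), and then invokes Hall's Galton--Watson domination to get subcriticality from $\bbE[L_n^2]\to 0$. You instead control \emph{all} the exponential marks at once by a union bound over $(x,k)$, Mecke's formula, and the Palm mean degree $6$, obtaining with probability $1-O(n^2 e^{-2n\delta})$ a small \emph{deterministic} radius $\delta$, and then compare directly to a fixed strictly subcritical Gilbert disc model. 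This avoids both the logarithmic degree bound and the branching-process domination, and formulating the estimate on the annulus lets you dispense with the paper's initial $4n$ union bound; in exchange you use the (standard) exponential decay of cluster diameter in the subcritical Boolean model.

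There is, however, a genuine gap in Steps 1--2. The event $\mathcal{S}_n(\delta)$ controls the marks $\mathcal{E}_{x,k}$ only for $x\in\bfX\cap B_{3n+1}$, yet the inclusion $\Sigma_{1/n}\cap B_{3n}\subset\bigcup_{x\in\bfX\cap B_{3n+\delta}}B(x,\delta)$ implicitly assumes that no grain $\Star(x,1/n)$ with $x\notin B_{3n+1}$ intersects $B_{3n}$. That is not automatic: a point $x$ lying outside $B_{3n+1}$ can perfectly well have a Delaunay star (and hence a grain) that reaches into $B_{3n}$ provided a long edge enters the box and the corresponding mark $\mathcal{E}_{x,k}$ is large; nothing in $\mathcal{S}_n(\delta)$ rules this out, and there are a priori infinitely many such candidate points. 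The paper handles exactly this via the stabilization radius: on the event $\{R(B_{3n})<n\}$, of probability $1-Ce^{-cn}$ by Lemma~\ref{lem:StabExpo}, every Delaunay star overlapping $B_{3n}$ has its center in $B_{3n}\oplus B(0,n)\subset B_{5n}$. You should therefore intersect with such a stabilization event and enlarge $\mathcal{S}_n(\delta)$ to govern $B_{5n}$; the Mecke computation $\bbE\big[\sum_{x\in\bfX\cap B_{5n}}\deg(x)\big]=6\,|B_{5n}|=O(n^2)$ is unchanged, so Step~1 still gives $\bbP(\mathcal{S}_n(\delta)^{\complement})\to 0$. With this correction Steps 2 and 3 go through, and your argument is a legitimate (and cleaner) alternative to the one in the paper.
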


\begin{proof}
The event $\mathcal{C}_{n}^\complement$ means the existence of a path in $\Sigma_{1/n}$ joining $S_n$ to $S_{3n}$. By translation invariance, this gives
\[
\bbP \big( \mathcal{C}_{n}^\complement \big) \leq 4n \times \bbP \big( S_1 \leftrightarrow S_{n} \; \mbox{in $\Sigma_{1/n}$} \big)
\]
so that it suffices to prove that $\bbP(S_1 \leftrightarrow S_{n} \; \mbox{in $\Sigma_{1/n}$})$ is a $o(1/n)$. Let us restrict the random set $\Sigma_{1/n}$ to grains centered at Poisson points in $\Lambda_n := B_{n} \oplus B(0,n)$:
\[
\Sigma_{1/n}[\Lambda_n] := \bigcup_{x \in \bfX \cap \Lambda_n} \Star(x,1/n)
\]
(in which grains $\Star(x,1/n)$ are still constructed from the whole Delaunay triangulation $\bfT$). Then, by Lemma \ref{lem:StabExpo},
\begin{eqnarray*}
\bbP \big( S_1 \leftrightarrow S_{n} \; \mbox{in $\Sigma_{1/n}$} \big) & \leq & \bbP ( R(B_{n}) > n ) + \bbP \big( S_1 \leftrightarrow S_{n} \; \mbox{in $\Sigma_{1/n}[\Lambda_n]$} \big) \\
& = & \bbP \big( S_1 \leftrightarrow S_{n} \; \mbox{in $\Sigma_{1/n}[\Lambda_n]$} \big) + o(1/n) ~.
\end{eqnarray*}
It is well known that the maximal degree in the Delaunay triangulation $\bfT$ among vertices inside $\Lambda_n$ is smaller than $C \log(n)$ with probability tending to $1$ exponentially fast. See for instance Bonnet and Chenavier \cite{BC}. We then get that the probability $\bbP(S_1 \leftrightarrow S_{n} \; \mbox{in $\Sigma_{1/n}$})$ is smaller than
\begin{equation}
\label{DegLogn}
\bbP \Big( \{ S_1 \leftrightarrow S_{n} \; \mbox{in $\Sigma_{1/n}[\Lambda_n]$} \} \, \cap  \bigcap_{x \in \bfX \cap \Lambda_n} \{\deg(x) \leq C \log(n)\} \Big)
\end{equation}
up to a term $o(1/n)$. When $\deg(x) \leq C \log(n)$, the grain $\Star(x,1/n)$ is included in the ball $B(x,L_n)$ where
\[
L_n := \frac{1}{2n} \max_{1\leq k\leq C\log(n)} \mathcal{E}_{x,k} ~.
\]
and whose distribution satisfies Lemma \ref{lem:Cn} stated below. Henceforth the probability (\ref{DegLogn}) is smaller than $\bbP( S_1 \leftrightarrow S_{n} \; \mbox{in $\textrm{Bool}(n)$})$ where $\textrm{Bool}(n)$ denotes the Poisson Boolean model defined by
\[
\textrm{Bool}(n) := \bigcup_{x \in \bfX} B(x , L_{n,x})
\]
in which the r.v.'s $(L_{n,x})_{x \in \bfX}$ are i.i.d. copies of $L_n$.

To sum up,
\[
\bbP \big( S_1 \leftrightarrow S_{n} \; \mbox{in $\Sigma_{1/n}$} \big) = \bbP \big( S_1 \leftrightarrow S_{n} \; \mbox{in $\textrm{Bool}(n)$} \big) + o(1/n) ~.
\]

The expected volume of each ball in $\textrm{Bool}(n)$ is $\pi \mathbb{E}[L_n^2]$. It is well known (since Hall \cite{Hall}) that the cluster of $B_1$ in $\textrm{Bool}(n)$ (and the number of balls that this cluster contains) is stochastically dominated by a Galton-Watson tree whose the mean number of children is of order $\mathbb{E}[L_n^2]$. Since this expectation can be made as small as we want as $n \to \infty$ (Lemma \ref{lem:Ln}), the dominating Galton-Watson tree will be subcritical for $n$ large enough. In this case, its total progeny (i.e. its total number of elements) admits an exponential tail decay (see the end of the first chapter in \cite{AN}). So the same holds for the number of elements belonging to the cluster of $S_1$ in $\textrm{Bool}(n)$. Combining with the fact that the r.v. $L_n$ admits also an exponential tail decay (Lemma \ref{lem:Ln}), we conclude that $\bbP(S_1 \leftrightarrow S_{n} \; \mbox{in $\textrm{Bool}(n)$})$ converges to $0$ exponentially fast. This achieves the proof of Lemma \ref{lem:Cn}.
\end{proof}

It then remains to show:

\begin{lem}
\label{lem:Ln}
For any integer $n>0$ and any real number $t>0$,
\[
\bbP( L_n > t) \leq C \log(n) e^{-2 n t} \; \mbox{ and } \; \lim_{n\to\infty} \mathbb{E}[L_n^2] = 0 ~.
\]
\end{lem}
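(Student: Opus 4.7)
The plan is to observe that $L_n$ is simply $\frac{1}{2n}$ times the maximum of $k_n := \lceil C\log(n)\rceil$ i.i.d.\ $\mathcal{E}xp(1)$ random variables, and then to exploit the standard tail and moment estimates for such a maximum.

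\textbf{Tail bound.} First, a straightforward union bound and the exponential tail of an $\mathcal{E}xp(1)$ variable yield
\[
\bbP(L_n>t)=\bbP\Big(\max_{1\le k\le k_n}\mathcal{E}_{x,k}>2nt\Big)\le\sum_{k=1}^{k_n}\bbP(\mathcal{E}_{x,k}>2nt)\le k_n\,e^{-2nt}\le C\log(n)\,e^{-2nt}.
\]
This is exactly the first assertion (up to adjusting the constant $C$).

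\textbf{Second moment.} For the limit $\bbE[L_n^2]\to 0$, I would simply integrate the tail: using $\bbE[L_n^2]=\int_0^\infty 2t\,\bbP(L_n>t)\,dt$ and splitting at the threshold $t^\ast:=\log(C\log n)/(2n)$, one bounds $\bbP(L_n>t)\le 1$ on $[0,t^\ast]$ and uses the tail estimate on $[t^\ast,\infty)$. The first contribution is at most $(t^\ast)^2=O\!\bigl((\log\log n)^2/n^2\bigr)$, and the second is
\[
\int_{t^\ast}^\infty 2t\,C\log(n)\,e^{-2nt}\,dt=O\!\left(\frac{(\log\log n)^2}{n^2}\right)
\]
after an elementary computation (integration by parts). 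Both contributions tend to $0$, which proves the second claim. Equivalently, one can recognize $2nL_n$ as the maximum $M_{k_n}$ of $k_n$ i.i.d.\ $\mathcal{E}xp(1)$ variables, use the classical identity $M_k\overset{d}{=}\sum_{i=1}^k \mathcal{E}_i/i$ to get $\bbE[M_{k_n}^2]=H_{k_n}^2+\sum_{i=1}^{k_n}1/i^2=O((\log\log n)^2)$, and conclude $\bbE[L_n^2]=\bbE[M_{k_n}^2]/(4n^2)\to 0$.

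\textbf{Obstacle.} There is no real obstacle here: both statements are routine consequences of the exponential distribution of the marks. The only mild care needed is that $L_n$ is defined from the $C\log(n)$ first marks only (because of the degree truncation used in Lemma~\ref{lem:Cn}), but this is precisely what makes the union bound give the prefactor $C\log(n)$ and keeps $\bbE[M_{k_n}^2]$ of order $(\log\log n)^2$, which is more than enough to beat the $1/n^2$ prefactor.
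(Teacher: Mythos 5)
Your proof is correct and takes essentially the same route as the paper: a tail estimate on the maximum of $C\log n$ i.i.d.\ exponentials followed by integration of the tail. The paper's version is even shorter---it derives the tail bound from the exact CDF $\big(1-e^{-2nt}\big)^{\lfloor C\log n\rfloor}$ together with $\log(1-u)\le -u$ rather than a union bound, and for the second moment it directly writes $\mathbb{E}[L_n^2]\le 2C\log(n)\int_0^\infty te^{-2nt}\,dt=C\log(n)/(2n^2)\to 0$, so your splitting at $t^\ast$ (which does yield a sharper $(\log\log n)^2/n^2$ rate) is unnecessary precision here.
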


\begin{proof}
By definition of the r.v. $L_n$, we can write
\[
\bbP( L_n > t) = 1 - \bbP \Big( \max_{k \leq C\log(n)} \mathcal{E}_{k} \leq 2nt \Big) = 1 - \big( 1 - e^{-2nt} \big)^{\lfloor C\log(n) \rfloor}
\]
where $\lfloor \cdot \rfloor$ denotes the integer part and $(\mathcal{E}_{k})_{k\geq 1}$ is a sequence of i.i.d. exponential r.v.'s with rate $1$. Thus, using the inequality $\log(1-u) \leq -u$ valid for any $u\in[0,1)$, we obtain the searched inequality for $\bbP( L_n > t)$. Moreover,
\[
\mathbb{E}[L_n^2] = 2 \int_0^\infty t \, \bbP( L_n > t) \, dt \leq 2 C \log(n) \int_0^\infty t e^{-2nt} \, dt \leq \frac{C \log(n)}{2 n^2}
\]
which tends to $0$ as $n\to\infty$.
\end{proof}

Let us notice that the proof of Theorem \ref{thm:mainResult}, Item $2$ should be significantly simpler if two given users $x,y \in \bfX_1 \cup \bfY$ (recall that $p=1$ here) on the same edge were connected iff $|x - y| \leq r$ as in \cite{le2020crowd,le2021continuum}. Indeed, in this case, the connectivity graph $\bdcalG_{1,0,r}$ could be immediately compared to a Poisson Boolean model with deterministic radius $r$ that it suffices to choose small enough to conclude.

\section{Proof of Theorem \ref{thm:mainResult}, Item $4$}
\label{sect:Item4}

In this section, we prove Item~$4$ claiming that the hypothetical region \Romanbar{2} does not exist outside the curve~$r \mapsto p_c(r)$, what is equivalent to say that its interior is empty. As sketched in the introduction, such result is obtained by comparing in some sense the partial derivatives w.r.t. parameters~$r$ and~$\lambda$ of the probability~$\bbP(S_1 \leftrightarrow \infty \text{ in $\bdcalG_{p,\lambda,r}$})$ that an infinite component in~$\bdcalG_{p,\lambda,r}$ exists and intersects the ball~$S_1$. In fact, for the sake of simplicity, we deal in this section with a slightly modified model, namely the graph only made of the streets of the PDT~$\bfT$ which are \textit{entirely} contained in~$\bdcalG_{p,\lambda,r}$. We denote it by~$\tilde{\bdcalG}_{p,\lambda,r}$. The operation performed to get~$\tilde{\bdcalG}_{p,\lambda,r}$ from~$\bdcalG_{p,\lambda,r}$ is a pruning, unable to break---if it exists---an infinite component. So the exact values of~$\bbP(S_1 \leftrightarrow \infty \text{ in $\tilde{\bdcalG}_{p,\lambda,r}$})$ may differ from~$\bbP(S_1 \leftrightarrow \infty \text{ in $\bdcalG_{p,\lambda,r}$})$, but both are either null or strictly positive at once. Otherwise said:
\[ \bbP(S_1 \leftrightarrow \infty \text{ in $\tilde{\bdcalG}_{p,\lambda,r}$})=0  \text{ if and only if } \bbP(S_1 \leftrightarrow \infty \text{ in $\bdcalG_{p,\lambda,r}$})=0. \]   
Our strategy now essentially rests on the following lemma, stating the heralded comparison, for finite approximations of the event~$\{S_1 \leftrightarrow \infty\}$:

\begin{prop} 
\label{prop:comparisonpartialderivatives}
Let~$(p,\lambda,r) \in (1/2,1]\times [0,+\infty) \times (0,+\infty)$ and~$n \geq 1$ an integer. We set~$\Theta_n (\lambda,r)=\bbP(S_1 \leftrightarrow S_{n} \; \text{ in $\tilde{\bdcalG}_{p,\lambda,r}$})$. The partial derivatives~$\partial_\lambda \Theta_n$ and~$\partial_r \Theta_n$ exists and are positive. For some continuous map~$C : (0,+\infty) \mapsto (0,+\infty)$ independent of~$n$, as well as of~$p$ and~$\lambda$, it holds that: 
\begin{align}
\label{eq:omparisonpartialderivatives}
    \partial_\lambda \Theta_n \leq C(r) e^{\frac{\lambda r}{2}} \partial_r \Theta_n.
\end{align}
\end{prop}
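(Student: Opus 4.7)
The proof follows the enhancement strategy sketched in the introduction. The idea is to express both partial derivatives of $\Theta_n$ via Russo-type formulas summed over \emph{pivotal} edges of the PDT $\bfT$ (Definition~\ref{dfn:pivotaledge}), and then compare the two resulting integrands edge by edge.

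\emph{Existence and positivity of the derivatives.} The event $\{S_1\leftrightarrow S_n\}$ in $\tilde{\bdcalG}_{p,\lambda,r}$ depends only on the (a.s.\ finite) collection of edges of $\bfT$ meeting $B_n$. For each such edge the probability of being entirely covered is an explicit smooth function of $(\lambda,r)$: it involves a Poisson number of Cox points, their uniform positions on the edge, i.i.d.\ $\mathcal{E}xp(1)$ marks, and at most two crossroad marks. Dominated convergence yields the existence of $\partial_\lambda\Theta_n$ and $\partial_r\Theta_n$. Positivity follows from the strict monotonicity of the coverage event in both parameters together with $p>1/2$, ensuring that pivotal edges have positive probability.

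\emph{Russo-type formula for $\partial_\lambda$.} Applying the Margulis-Russo formula for Poisson intensities to $\bfY$ gives
\[
\partial_\lambda\Theta_n \;=\; \int_{\bfT}\mathbb{E}\Big[\Theta_n\big(\bar{\bfY}\cup\{(y,\mathcal{E}')\}\big)-\Theta_n(\bar{\bfY})\Big]\,d\Lambda(y),
\]
where $\mathcal{E}'\sim\mathcal{E}xp(1)$ is fresh and independent. For $y$ on a PDT edge $e=e(y)$, the integrand is nonzero only if $e$ is pivotal, not currently covered, but becomes covered once $(y,\mathcal{E}')$ is added. Hence one rewrites
\[
\partial_\lambda\Theta_n \;=\; \mathbb{E}\Big[\,\sum_{e}\mathds{1}_{e\text{ pivotal, not covered}}\cdot\alpha_e(\lambda,r)\Big]
\]
with $\alpha_e(\lambda,r)$ the integral over $e$ of the probability that a single fresh marked Cox point placed there closes the coverage of $e$.

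\emph{Russo-type formula for $\partial_r$.} For every edge $e$ of $\bfT$, introduce the (deterministic, conditionally on the configuration on $e$) critical threshold
\[
R^*_e \;:=\; \inf\{\,r'>0:\,e\text{ is covered at parameter }r'\,\}.
\]
Monotonicity of the coverage event in $r$ and a standard differencing/disintegration argument give
\[
\partial_r\Theta_n \;=\; \mathbb{E}\Big[\,\sum_{e}\mathds{1}_{e\text{ pivotal, not covered}}\cdot\rho_e(r)\Big],
\]
where $\rho_e(r)$ is the conditional density of $R^*_e$ at $r$. The exponential structure of the marks yields an explicit form for $\rho_e(r)$: as $r$ increases, each range $\frac{r}{2}\mathcal{E}$ grows at speed $\mathcal{E}/2$, so that a ``gap'' in the coverage of $e$ shrinks at a speed equal to half the sum of the marks at its two borders.

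\emph{Edge-wise comparison.} The heart of the proof is the pointwise bound
\[
\alpha_e(\lambda,r)\;\leq\;C(r)\,e^{\lambda r/2}\,\rho_e(r),
\]
valid uniformly in the configuration on $e$. For a last gap of length $g$ in $e$ bordered by two existing intervals, an upper bound on $\alpha_e$ follows from the $\mathcal{E}xp(1)$ tail: a uniformly placed fresh point on the gap closes it with probability at most of order $g\,e^{-g/r}$. A lower bound on $\rho_e(r)$ comes from the a.s.\ speed of closure of the gap; integrating against the joint law of the Cox configuration, the bound loses a factor comparable to the probability that the relevant neighbourhood (of length at most $r/2$ on either side of the gap) contains no further Cox point, i.e.\ a factor of order $e^{-\lambda r/2}$. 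Inverting this factor produces the $e^{\lambda r/2}$ in the statement. Summing over edges yields the global comparison.

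\emph{Main obstacle.} The most delicate step is the edge-wise comparison: it requires a structural case analysis depending on the number of gaps and on the nature (Cox point or crossroad mark, recalling that $r'=2r$ doubles the crossroad speed) of the intervals bordering them, together with a uniform bound sharp enough to yield the factor $e^{\lambda r/2}$. A second technical point is the rigorous justification of the $\partial_r$ Russo-type formula, since $r$ enters through the mark distribution rather than the intensity; this is handled by the monotone pathwise coupling in $r$ and the explicit disintegration of $R^*_e$ via the $\mathcal{E}xp(1)$ marks along the current gap.
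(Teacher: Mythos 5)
Your overall strategy---Russo-type formulas for both $\partial_\lambda\Theta_n$ and $\partial_r\Theta_n$ followed by an edge-by-edge comparison---is indeed the one the paper follows, and your intuition for the origin of the $e^{\lambda r/2}$ factor (the probability that a neighbourhood of the last gap is empty of Cox points) is qualitatively right. However, the proposal has a genuine gap, and it is located precisely where the work of the proof lies.

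The claim that the edge-wise comparison $\alpha_e(\lambda,r)\leq C(r)\,e^{\lambda r/2}\rho_e(r)$ holds \emph{uniformly in the configuration on $e$} cannot be correct as formulated. You define $R^*_e$ as the critical radius at which $e$ becomes covered and observe, correctly, that conditionally on the configuration on $e$ (positions and normalised marks), $R^*_e$ is \emph{deterministic}. Its ``conditional density'' $\rho_e(r)$ is therefore a Dirac mass at $R^*_e$, which vanishes for Lebesgue-almost every $r$, whereas $\alpha_e(\lambda,r)>0$ whenever $e$ has exactly one gap. A pointwise-in-configuration inequality of the claimed form is thus false for a.e.\ $r$. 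The comparison can only hold after averaging over the internal randomness of the edge; you acknowledge this implicitly by ``integrating against the joint law of the Cox configuration,'' but that directly contradicts the uniform-in-configuration formulation, and you never state or prove the averaged version.

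The paper resolves exactly this tension by introducing an auxiliary representation $\overline{\bdcalG}_{p,\lambda,r}$, in which each PDT edge $\{x,x'\}$ is declared open when an attached independent uniform variable $\bdcalU_{x,x'}$ falls below the \emph{averaged} coverage probability $\bfp(\|x-x'\|,\lambda,r)$ of a one-dimensional Boolean model. This decouples pivotality from the internal edge randomness and makes \emph{both} derivatives $\partial_\lambda\Theta_n$ and $\partial_r\Theta_n$ expressible as $\bbE[\sum_{\text{pivotal}}\partial_\bullet\bfp(\|x-x'\|,\lambda,r)]$ (Proposition~\ref{prop:russoformulas}); the $\partial_r$-Russo formula is then unproblematic, in contrast to your $R^*_e$ device. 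After that, the entire content of the proposition reduces to the inequality $\partial_\lambda\bfp\leq C(r)e^{\lambda r/2}\partial_r\bfp$ for the \emph{deterministic} function $\bfp$ (Proposition~\ref{prop:coveringasegment}). Establishing it requires the explicit integral representations $\frR^{N,r}_\frb$, $\frL^{N,r}_\frb$, the functions $W_{\ell,r}$ and $W_{\ell,2r}$, and a detailed case analysis over $\frb\in\{0,1,2\}$ (how many extremities of the segment bound the last gap); the factor $e^{\lambda r/2}$ emerges from $W_{\ell,2r}-W_{\ell,r}\geq -r/(2\ell)$ and the Poisson generating function. None of this is carried out in your proposal. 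You correctly flag the role of $r'=2r$ and the gap-by-gap case analysis as the ``main obstacle,'' but that obstacle is not a technicality: without the one-dimensional analysis the inequality is unproved, and with the pointwise formulation it is unprovable.
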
 

Directional differentiability of~$\Theta_n$ and the positivity assertion will be the subject of a specific proposition below. The inequality~\eqref{eq:omparisonpartialderivatives} is enough to get Item 4. 
\begin{proof}[Proof of Item 4]
   A nonempty interior means that for some~$r_1 <r_2$ and $p < p_c(r)$, we have~$\bbP(S_1 \leftrightarrow \infty \text{ in $\tilde{\bdcalG}_{p,\epsilon,r_1}$})>0$ for any~$\epsilon>0$, while~$\bbP(S_1 \leftrightarrow \infty \text{ in $\tilde{\bdcalG}_{p,0,r_2}$})=0$. The finite-increments formula however ensures that
\[\Theta_n (0,r_2)-\Theta_n(\epsilon,r_1) = (r_2-r_1) \cdot \partial_r \Theta_n (\lambda^*,r^*) - \epsilon \cdot \partial_\lambda \Theta_n(\lambda^*,r^*), \]
for some couple~$(\lambda^*,r^*) \in [0,\epsilon] \times [r_1,r_2] $. Proposition~\ref{prop:comparisonpartialderivatives} then implies that:
\[\Theta_n (0,r_2)-\Theta_n(\epsilon, r_1) \geq  \left(r_2-r_1 - \epsilon e^{\epsilon r_2} \sup_{r \in [r_1,r_2]} C(r)\right)  \cdot \partial_r \Theta_n (\lambda^*,r^*) . \]
Since~$\partial_r \Theta_n \geq 0$, we obtain~$\Theta_n (0,r_2)-\Theta_n(\epsilon, r_1)\geq 0$ as~$\epsilon$ is small enough. By letting~$n \to +\infty$, it leads to a contradiction:
\[0=\bbP(S_1 \leftrightarrow \infty \text{ in $\tilde{\bdcalG}_{p,0,r_2}$})=\lim_{n \to +\infty}  \Theta_n (0,r_2) \geq \lim_{n \to +\infty}  \Theta_n (\epsilon, r_1) = \bbP(S_1 \leftrightarrow \infty \text{ in $\tilde{\bdcalG}_{p,\epsilon,r_1}$})>0,\] 
which completes the proof.
\end{proof}

\subsection{Proof of Proposition~\ref{prop:comparisonpartialderivatives}}
\label{subsec:proofcomparisonpartialderivatives}

Proving Proposition~\ref{prop:comparisonpartialderivatives} requires two ingredients. First, a thorough study of a finite one-dimensional Boolean model, in which we show that the partial derivatives of the probability to fully cover the segment satisfy an inequality akin to~\eqref{eq:omparisonpartialderivatives}. The second piece is made of Russo's type formulas which relate~$\partial_\lambda \Theta_n$ and~$\partial_r \Theta_n$ to the set of edges in the PDT, being \textit{pivotal} for the occurrence of the event~$\{S_1 \leftrightarrow S_{n}\}$. We detail the arguments in the two paragraphs below and explain how the whole yields Proposition~\ref{prop:comparisonpartialderivatives}. \\

\noindent
\textbf{A finite one-dimensional Boolean model.} Let~$\ell >0$ and~$N$ a natural number. We set~$(\bfU_k)_{0 \leq k \leq N+1}$ a finite sequence of points such that:
\begin{itemize}
    \item the first one $\bfU_0$ is the left extremity of the segment~$[0,\ell]$, that is~0;
    \item the~$N$ next ones~$\bfU_1, \dotsc, \bfU_N$ are uniformly and independently drawn on the same segment;
    \item finally, the last one $\bfU_{N+1}$ is the right extremity, that is~$\ell$. 
\end{itemize}
Attach to them~$N+2$ i.i.d. random variables~$\bdcalE_0, \dotsc, \bdcalE_{N+1}$ with common distribution~$\text{Exp}(1)$. Our Boolean model is then defined as follows. 
\begin{itemize}
    \item Every \textit{internal point}~$1 \leq k \leq N$ is assumed to cover the area centered at it and of radius~$\bfR_j^r:=\frac{r}{2} \bdcalE_j$, that is the segment~$[\bfI_j^r,\bfS_j^r]:=[\bfU_j-\bfR_j^r,\bfU_j+\bfR_j^r]$;
    \item the expected range of \textit{boundary points} is doubled, so that we respectively have
    \[[\bfI_0^r,\bfS_0^r]=[-r \bdcalE_0,r \bdcalE_0]=[-\bfR_0^r,\bfR_0^r]\]
    and
    \[[\bfI_{N+1}^r,\bfS_{N+1}^r]=[\ell-r \bdcalE_{N+1},\ell+r \bdcalE_{N+1}]=[\ell-\bfR_{N+1}^r,\ell+\bfR_{N+1}^r].\]
\end{itemize}
We are interested in the probability 
\begin{align}
\label{eq:probafullcoveringsegment}
  \bfp(\ell,\lambda,r):= \bbP\left([0,\ell] \subseteq \cup_{j=0}^{\bfN+1} [\bfI_j^r,\bfS_j^r]\right),   
\end{align}
that the surface covered by points includes the segment~$[0,\ell]$, in the case where~$\bfN$ is a Poisson random variable of parameter~$\lambda \ell$, independently drawn from the locations of points and their range. It is indeed equal, conditionally on the realization of~$\bfX_p$, to the probability that~$\tilde{\bdcalG}_{p,\lambda,r}$ entirely contains a given street of length~$\ell$, a quantity that will appear as critical in the next paragraph. We present now a crucial result on the way taking us to Proposition~\ref{prop:comparisonpartialderivatives}:

\begin{prop}
\label{prop:coveringasegment}
The partial derivative~$\partial_\lambda \bfp$ and~$\partial_r \bfp$ both exist and are positive. Also, for any~$r>0$ and~$\lambda \geq 0$:
\begin{align}
\label{eq:boundednesspartialderivativesonedimodel}
\max\Bigg\{\sup_{\ell>0} \partial_\lambda \bfp (\ell,\lambda, r), \ \sup_{\ell>0} \partial_r \bfp (\ell,\lambda, r) \Bigg\}<+\infty
\end{align}
Finally, for some continuous map~$C: (0,+\infty) \mapsto (0,+\infty)$ independent of~$\ell$ and~$\lambda$, it holds that:
\begin{align}
\label{eq:ineqpartialderivativesonedimodel}
    \partial_\lambda \bfp \leq C(r) e^{\frac{\lambda r}{2}} \partial_r \bfp.
\end{align}
\end{prop}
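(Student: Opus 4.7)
I will start by writing the Poisson expansion
\[
\mathbf{p}(\ell,\lambda,r) = \sum_{N\geq 0} \frac{(\lambda\ell)^N e^{-\lambda\ell}}{N!}\, \mathbf{p}_N(\ell,r),
\]
where $\mathbf{p}_N(\ell,r)$ denotes the coverage probability given exactly $N$ uniform internal points on $[0,\ell]$. Each $\mathbf{p}_N(\ell,r)$ is an integral of an indicator over a polytope in $[0,\ell]^N \times \mathbb{R}_+^{N+2}$ whose defining half-spaces depend smoothly on $r$, and termwise differentiation of the absolutely convergent series establishes smoothness of $\mathbf{p}$ in both parameters. Positivity of the partial derivatives is a direct consequence of the monotonicity of the coverage event in $(\lambda,r)$ together with the existence of configurations on which an infinitesimal change of the parameter flips the outcome.

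\textbf{Russo-type representations.} For the $\lambda$-derivative I will apply the Mecke--Slivnyak formula: conditionally on the existing marked point process with uncovered set $U$, adding a fresh point at $x \in [0,\ell]$ with exponential mark $\mathcal{E}$ makes the segment covered iff $U \subseteq [x - r\mathcal{E}/2,\, x+r\mathcal{E}/2]$. Integration against $x \in [0,\ell]$ and $\mathcal{E}\sim\mathrm{Exp}(1)$ produces the clean identity (up to negligible boundary corrections)
\[
\partial_\lambda \mathbf{p}(\ell,\lambda,r) \;=\; r\, \bbE\!\left[e^{-\mathrm{span}(U)/r} \, \mathbf{1}_{U\neq\emptyset}\right],
\]
where $\mathrm{span}(U) := \sup U - \inf U$; in particular this immediately delivers the uniform bound $\partial_\lambda \mathbf{p} \leq r$. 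For the $r$-derivative, I will use that $\partial_r \mathbf{p} = f_{r^*}(r)$, where $r^*$ is the critical radius above which $[0,\ell]$ is covered. To produce a tractable lower bound, I will integrate out only $\mathcal{E}_0$ (the left-boundary mark): given all other randomness, coverage occurs iff $\mathcal{E}_0 \geq T(r)$ for an explicit random threshold $T(r)$ controlling the leftmost gap. Differentiating in $r$ then yields a lower bound of the form $\partial_r \mathbf{p} \geq r^{-1}\, \bbE[\mathbf{1}_B\, T(r)\, e^{-T(r)}]$, from which the uniform boundedness~\eqref{eq:boundednesspartialderivativesonedimodel} also follows.

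\textbf{Comparison.} The inequality~\eqref{eq:ineqpartialderivativesonedimodel} is the crux of the proposition and I expect it to be the main obstacle, as the two representations above concern disjoint regions of configuration space---non-covered configurations for $\partial_\lambda$, just-critical covered ones for $\partial_r$. My plan is to set up an explicit coupling: to each non-covered configuration with uncovered span $s$, I will associate a just-covered configuration by inflating the left-boundary mark $\mathcal{E}_0$ until the leftmost uncovered block is closed; the exponential density of $\mathcal{E}_0$ penalizes this inflation by a factor of order $e^{-s/r}$, which is exactly the integrand appearing in the $\partial_\lambda$ formula. Restricting to configurations where no additional internal point lies in a tubular neighborhood of the critical gap---an event of probability of order $e^{-\lambda r/2}$---then accounts for the factor $e^{\lambda r/2}$, with the remaining deterministic quantities absorbed into $C(r)$. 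The delicate part will be engineering the coupling cleanly in the presence of edge effects and possibly disconnected uncovered sets; splitting the analysis according to the side on which the dominant uncovered block lies and exploiting the left/right symmetry of the model should make the bookkeeping tractable.
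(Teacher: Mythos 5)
Your sketch correctly identifies the crux (the comparison inequality lives on two hard-to-compare events) and your two one-sided representations are, in essence, valid: the Mecke computation does give an upper bound
$\partial_\lambda \bfp \leq r\,\bbE[e^{-\operatorname{span}(U)/r}\mathds{1}_{U\neq\emptyset}]$
(the paper's Lemma~\ref{lem:fullcoverage} shows that the true integrand is $\ell\Psi(\inf U,\sup U)=re^{-\operatorname{span}(U)/r}-\tfrac r2\big(e^{-2\sup U/r}+e^{-2(\ell-\inf U)/r}\big)$, so the ``negligible'' corrections are simply \emph{nonpositive}, not small -- they can be of the same order -- but the direction of your bound is fine), and integrating out only the left boundary mark does give a positive lower bound on $\partial_r\bfp$ of the form $r^{-1}\bbE[T e^{-T}\mathds{1}_{T>0}]$ with $T=\sup U'/r$, where $U'$ is the uncovered set once the left boundary's contribution is erased. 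Both bounds are genuinely correct and immediately deliver positivity and the boundedness claim~\eqref{eq:boundednesspartialderivativesonedimodel}.

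The gap is the comparison~\eqref{eq:ineqpartialderivativesonedimodel}, which you yourself flag as the main obstacle and do not execute. Your two bounds do not combine, and in fact the naive pointwise comparison points the wrong way: since $U\subseteq U'$ one has $\operatorname{span}(U)\le\sup U\le\sup U'=rT$, hence $e^{-\operatorname{span}(U)/r}\ge e^{-T}$, so the very quantity you want to bound from above dominates the one you have a lower bound for. The proposed coupling -- inflating $\bdcalE_0$ to close the leftmost gap, and absorbing the cost of an empty tubular neighborhood into $e^{\lambda r/2}$ -- is not set up as a measure-preserving map and the exponent $e^{\lambda r/2}$ is not justified by the tube heuristic; in the paper it comes from comparing the generating function $e^{\lambda\ell(W_{\ell,r}-1)}$ (which upper-bounds the $\lambda$-side through the hole probability $W_{\ell,r}^N$) against $e^{\lambda\ell(W_{\ell,2r}-1)}$ (which lower-bounds the $r$-side through $W_{\ell,2r}^N$), together with the elementary estimate $\ell(W_{\ell,r}-W_{\ell,2r})\le r/2$. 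The paper's proof replaces your coupling with a three-way case split according to the number $\frb\in\{0,1,2\}$ of boundary points flanking the hole, explicit closed-form expressions for the densities, the functions $G_\frb^r,H_\frb^r,K_\frb^r$, and controlled asymptotics as $\ell\to0$ and $\ell\to\infty$ to show $G_\frb^r/H_\frb^r$ is bounded with a continuous supremum in $r$. None of this bookkeeping is present or replaced by an equally rigorous substitute in your sketch, so the proposition is not established.
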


The proof merely consists of an in-depth but elementary analysis of the function~$\bfp$, that we postpone to Section~\ref{subsec:analysisprobacoversegment}. Developments outlined there are not helpful to derive Proposition~\ref{prop:comparisonpartialderivatives}, statements~\eqref{eq:boundednesspartialderivativesonedimodel} and~\eqref{eq:ineqpartialderivativesonedimodel} being all we need at this stage for such purpose. They are rather aimed at readers eager to fully grasp the underlying reasons of some hypothesis made in our model---random and exponentially-distributed ranges, with a higher mean for users at crossroads---and how the latter could be relaxed. Recall for instance, as said in the introduction, that all our results remain true by setting the connection radius of relays at more than twice that of users on streets; in this section, by more than doubling the mean area that cover boundary points, compared to the internal ones. See Section~\ref{sec:finaldiscussion} about the other conjectures. \\

\noindent
\textbf{Russo's formulas.} In this paragraph, we work with a more convenient representation of the graph~$\tilde{\bdcalG}_{p,\lambda,r}$. Let~$(\bdcalU_{y,y'})_{y,y' \in \bbR}$ be a sequence of i.i.d. random variables, uniformly distributed on~$[0,1]$. Label as \textit{open} the edges~$\{x,x'\}$ of the PDT~$\bfT$ which obey two conditions:
\begin{enumerate}
    \item the crossroads~$x,x'$ flanking it both belong to~$\bfX_p$; 
    \item the companion random variable~$\bdcalU_{x,x'}$ satisfies the inequality~$\bdcalU_{x,x'}\leq \bfp(\| x-x' \|,r,\lambda)$.
\end{enumerate} 
We claim that the graph built from the open edges of~$\bfT$, denoted by~$\overline{\bdcalG}_{p,\lambda,r}$, is distributed as~$\tilde{\bdcalG}_{p,\lambda,r}$:
\begin{align}
\label{eq:distributionequalityprunedgraph}
    \overline{\bdcalG}_{p,\lambda,r}\overset{(d)}{=}\tilde{\bdcalG}_{p,\lambda,r}.
\end{align}
Such alternative representation allows us to make rigorous the key notion of \textit{pivotal} edge:

\begin{dfn}
\label{dfn:pivotaledge}
    An edge~$\{x,x'\}$ of~$\bfT$ is said to be \emph{pivotal} for the event~$\{S_1 \leftrightarrow S_{n}\}$ if the latter occurs in~$\overline{\bdcalG}_{p,\lambda,r} \cup (x,x')$, but does not in~$\overline{\bdcalG}_{p,\lambda,r}\!\setminus\!(x,x')$. 
\end{dfn}

Note that a pivotal edge necessarily intersects the box~$B_n$. Their total number is hence almost surely finite, and even integrable given the exponential decay of the stabilization radius~$R(B_n)$ in~PDT. See Lemma~\ref{lem:StabExpo}. 

A Russo-type formula affirms that the local growth rate of an event's probability is all the more greater as the number of edges being pivotal for it is high. This is exactly what we observe for~$\Theta_n$:

\begin{prop}
    \label{prop:russoformulas}
    For any triplet~$(p,\lambda,r)$, we have:
    \[\partial_\lambda \Theta_n = \bbE\Bigg[\sum_{\{x,x'\} \text{ pivotal}} (\partial_\lambda \bfp) (\| x-x' \|,\lambda,r)\Bigg] \text{ and } \partial_r \Theta_n = \bbE\Bigg[\sum_{\{x,x'\} \text{ pivotal}} (\partial_r \bfp) (\| x-x' \|,\lambda,r)\Bigg].\]
\end{prop}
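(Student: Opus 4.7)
The plan is to reduce the claim to a classical finite-dimensional Russo formula applied conditionally on the environment, and then to interchange derivative and expectation using the boundedness estimates of Proposition~\ref{prop:coveringasegment}.

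First, I would work with the representation~\eqref{eq:distributionequalityprunedgraph} and condition on the environment $\mathcal{H} := \sigma(\bfT, \bfX_p)$. Given $\mathcal{H}$, the event $\{S_1 \leftrightarrow S_n\}$ in $\overline{\bdcalG}_{p,\lambda,r}$ becomes an increasing Boolean function of the independent Bernoulli variables $\mathds{1}_{\bdcalU_{x,x'} \leq \pi_{x,x'}(\lambda,r)}$ indexed by edges $\{x,x'\}$ of $\bfT$ with both endpoints in $\bfX_p$, where $\pi_{x,x'}(\lambda,r) := \bfp(\|x-x'\|,\lambda,r)$ is smooth and has nonnegative partial derivatives by Proposition~\ref{prop:coveringasegment}. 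Moreover, any edge pivotal for $\{S_1 \leftrightarrow S_n\}$ must intersect $B_n$ (as noted after Definition~\ref{dfn:pivotaledge}), so the event effectively depends on the finite, $\mathcal{H}$-measurable family $\mathcal{F}$ of such edges.

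Second, applying the classical finite-dimensional Russo formula conditionally on $\mathcal{H}$ yields, almost surely and for $\theta \in \{\lambda, r\}$,
\[
\partial_\theta \bbP\bigl(S_1 \leftrightarrow S_n \,\big|\, \mathcal{H}\bigr) = \sum_{e \in \mathcal{F}} \bbP(e \text{ pivotal} \mid \mathcal{H}) \cdot \partial_\theta \bfp(\|e\|,\lambda,r).
\]
Taking expectations and invoking Fubini--Tonelli on the positive series, the right-hand side exactly matches the expression appearing in the statement.

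The main technical obstacle is the interchange $\partial_\theta \bbE[\bbP(\cdot \mid \mathcal{H})] = \bbE[\partial_\theta \bbP(\cdot \mid \mathcal{H})]$, which I would settle via dominated convergence applied to difference quotients. By the finite-increments inequality and continuity of the derivatives in the parameters, the quotient is pointwise dominated by $|\mathcal{F}|$ times a constant uniform on a neighborhood of the working $(\lambda, r)$, using~\eqref{eq:boundednesspartialderivativesonedimodel}. Integrability of the dominant reduces to showing that $\bbE|\mathcal{F}| < \infty$: edges of the PDT intersecting $B_n$ have both endpoints inside $B_n \oplus B(0, R(B_n))$, whose cardinality in $\bfX$ has finite mean thanks to the exponential tail of $R(B_n)$ provided by Lemma~\ref{lem:StabExpo}. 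This justifies the swap and concludes.
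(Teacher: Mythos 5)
Your proposal is correct and follows essentially the same route as the paper's proof: condition on the environment $(\bfT,\bfX_p)$, obtain a quenched Russo-type identity for the finite family of edges meeting $B_n$, and then interchange $\partial_\theta$ and expectation via dominated convergence using \eqref{eq:boundednesspartialderivativesonedimodel} together with the integrability of the number of relevant edges (coming from the exponential tail of the stabilization radius, Lemma~\ref{lem:StabExpo}). The only genuine difference is presentational: the paper re-derives the quenched Russo formula directly from the difference quotient $\Delta_h\Theta_n^{|\bfX,\bfX_p}/h$ by arguing that, as $h\to 0^+$, at most one edge can flip and that edge must be pivotal; you instead note that, conditionally on the environment, $\{S_1\leftrightarrow S_n\}$ is an increasing event in the independent Bernoulli edge states $\mathds{1}_{\bdcalU_{x,x'}\le\bfp(\|x-x'\|,\lambda,r)}$ and cite the standard finite-dimensional Russo formula (with the chain rule through the edge probabilities). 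Your handling of the interchange is actually slightly more careful than the paper's wording: you dominate the difference quotients themselves via the finite-increments inequality and a neighborhood-uniform bound on the derivatives, rather than only bounding the pointwise limit. Both arguments are sound; yours buys a bit more transparency at the dominated-convergence step and is marginally more modular, while the paper's is more self-contained.
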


\begin{proof}[Proof of Proposition~\ref{prop:russoformulas}]
    The demonstration is quite standard and is the same for both equalities. We focus on the first one. It essentially stems from a \textit{quenched} version of the Russo's formula. For any~$h>0$, set
\[\Delta_h \Theta_n^{|\bfX,\bfX_p}(\lambda,r):=\bbP(S_1 \leftrightarrow S_{n} \; \text{ in $\overline{\bdcalG}_{p,\lambda+h,r}$} | \bfX, \bfX_p)-\bbP(S_1 \leftrightarrow S_{n} \; \text{ in $\overline{\bdcalG}_{p,\lambda,r}$} | \bfX, \bfX_p).\] 
Then, almost surely:
\begin{align}
\label{eq:quenchedrussoformula}
\lim_{h \to 0^+} \frac{\Delta_h \Theta_n^{|\bfX,\bfX_p}(\lambda,r)}{h}=\sum_{\{x,x'\} \text{ pivotal}} (\partial_\lambda \bfp) (\| x-x' \|,\lambda,r).
\end{align}
The difference~$\Delta_h \Theta_n^{|\bfX,\bfX_p}(\lambda,r)$ is indeed equal to the probability that the event~$\{S_1 \leftrightarrow S_{n}\}$ occurs in~$\overline{\bdcalG}_{p,\lambda+h,r}$, but does not in~$\overline{\bdcalG}_{p,\lambda,r}$, conditionally on~$\bfX$ and~$\bfX_p$. This happens only if at least one edge~$\{x,x'\}$ of the PDT has been opened by increasing the parameter~$\lambda$ to~$\lambda+h$, meaning that
\[\bfp(\| x-x' \|, \lambda,r)<\bdcalU_{x,x'} \leq \bfp(\| x-x' \|,\lambda+h,r).\] 
As~$h \to 0^+$, because of the mutual independence between the random variables~$\bdcalU_{y,y'}$, there cannot be more than one such edge. So there is exactly one, which is furthermore pivotal since its opening goes with the occurrence of~$\{S_1 \leftrightarrow S_{n}\}$. Rephrased in mathematical terms:
\begin{align*}
    \Delta_h \Theta_n^{|\bfX,\bfX_p}(\lambda,r)&=\sum_{\{x,x'\} \text{ pivotal}} \bbP\Big(\bfp(\| x-x' \|, \lambda,r)<\bdcalU_{x,x'} \leq \bfp(\| x-x' \|,\lambda+h,r)\Big) \quad + \quad o(h) \\ 
    &=\sum_{\{x,x'\} \text{ pivotal}} \Big[\bfp(\| x-x' \|,\lambda+h,r)-\bfp(\| x-x' \|,\lambda,r)\Big] \quad + \quad o(h).
\end{align*}
Thus the pointwise limit~\eqref{eq:quenchedrussoformula}. Finally, we use Lebesgue's theorem for upgrading the latter to the expected Russo's formula, thanks to the following domination, which is true for some constant~$K=K_{\lambda,r}>0$ provided by~\eqref{eq:boundednesspartialderivativesonedimodel}:
\[ \sum_{\{x,x'\} \text{ pivotal}} (\partial_\lambda \bfp) (\| x-x' \|,\lambda,r) \leq K \times \# \text{ pivotal edges} ,\]
where~$\# \text{ pivotal edges}$ is an integrable random variable as explained earlier.
\end{proof}

\noindent
\textbf{Proof of Proposition~\ref{prop:comparisonpartialderivatives}.} Once we have in hand Propositions~\ref{prop:coveringasegment} and~\ref{prop:russoformulas}, the proof of Proposition~\ref{prop:comparisonpartialderivatives} is immediate. Indeed:
\begin{align*}
    \partial_\lambda \Theta_n &\underset{\text{Prop.~\ref{prop:russoformulas}}}{=} \bbE\Bigg[\sum_{\{x,x'\} \text{ pivotal}} (\partial_\lambda \bfp) (\| x-x' \|,\lambda,r)\Bigg] \nonumber \\
    &\underset{\text{Prop.~\ref{prop:coveringasegment}}}{\leq} \bbE\Bigg[\sum_{\{x,x'\} \text{ pivotal}} C(r) e^{\frac{\lambda r}{2}} (\partial_r \bfp) (\| x-x' \|,\lambda,r)\Bigg],
    \end{align*}
   for some continuous map~$C: (0,+\infty) \mapsto (0,+\infty)$. Hence:
   \[\partial_\lambda \Theta_n \leq C(r) e^{\frac{\lambda r}{2}} \times \bbE\Bigg[\sum_{\{x,x'\} \text{ pivotal}}(\partial_r \bfp) (\| x-x' \|,\lambda,r)\Bigg] \underset{\text{Prop.~\ref{prop:russoformulas}}}{=} C(r) e^{\frac{\lambda r}{2}} \partial_r \Theta_n,\]
   as it has been heralded.

\subsection{Proof of Proposition~\ref{prop:coveringasegment}}
\label{subsec:analysisprobacoversegment}
This section is devoted to prove Proposition~\ref{prop:coveringasegment}, which enumerates several properties of the function~$\bfp$ defined by~\eqref{eq:probafullcoveringsegment}. We focus, more specifically, on its partial derivatives w.r.t. variables~$\lambda$ and~$r$. All the arguments used in the demonstration are nothing but basic analysis. We start by merely establishing the existence of~$\partial_\lambda \bfp$ and~$\partial_r \bfp$. Amenable expressions are derived at once, as well as their positivity.  \\

\noindent
\textbf{Existence, positivity and expression of~$\partial_\lambda \bfp$.} Let~$h>0$. Given the superposition property of the Poisson distribution, the difference~$\bfp(\ell,\lambda+h,r)-\bfp(\ell,\lambda,r)$ can be rewritten as follows:
\[\bfp(\ell,\lambda+h,r)-\bfp(\ell,\lambda,r)=\bbP\left([0,\ell] \not\subseteq \cup_{j=0}^{\bfN+1} [\bfI_j^r,\bfS_j^r]; \ [0,\ell] \subseteq \left( \cup_{j=0}^{\bfN+1} [\bfI_j^r,\bfS_j^r]\right) \cup \left( \cup_{j=1}^{\tilde{\bfN}} [\tilde{\bfI}_j^r,\tilde{\bfS}_j^r] \right) \right),  \]
where~$\tilde{\bfN}$ is a Poisson random variable of parameter~$h \ell$, independent of~$\bfN$, while~$\cup_{j=1}^{\tilde{\bfN}} [\tilde{\bfI}_j^r,\tilde{\bfS}_j^r]$ is the surface spanned by~$\tilde{\bfN}$ additional points, drawn on the segment~$[0,\ell]$ in the same way as the~$\bfN$ former ones, and independently of them. Otherwise said, by increasing the intensity~$\lambda$, we give birth to new points, what provides a chance to get rid of existing blank zones. As~$h \to 0$, the probability to see arising more than two is however of order~$o(h)$, so that:
\begin{align}
&\bfp(\ell,\lambda+h,r)-\bfp(\ell,\lambda,r) \nonumber \\
&= \underset{=\bbP(\tilde{\bfN}=1)}{\underbrace{h \ell e^{- h \ell} }} \times \bbP\left([0,\ell] \not\subseteq \cup_{j=0}^{\bfN+1} [\bfI_j^r,\bfS_j^r]; \ [0,\ell] \subseteq \left( \cup_{j=0}^{\bfN+1} [\bfI_j^r,\bfS_j^r]\right) \cup [\tilde{\bfI}_1^r,\tilde{\bfS}_1^r] \right) + o(h). \nonumber 
\end{align}
The partial derivative~$\partial_\lambda \bfp$ thus exists for any triplet~$(\ell,\lambda,r)$ and:
\begin{align}
\label{eq:partialderivativelambdaexpression}
    \partial_\lambda \bfp (\ell,\lambda,r) = \ell \times \bbP\left([0,\ell] \not\subseteq \cup_{j=0}^{\bfN+1} [\bfI_j^r,\bfS_j^r]; \ [0,\ell] \subseteq \left( \cup_{j=0}^{\bfN+1} [\bfI_j^r,\bfS_j^r]\right) \cup [\bfI^r,\bfS^r] \right) , 
\end{align}
with~$[\bfI^r,\bfS^r]:=[\bfU-\bfR,\bfU+\bfR]$ the interval that a point~$\bfU$, uniformly drawn on~$[0,\ell]$, typically covers as its range~$\bfR:=\frac{r}{2}\bdcalE$ is exponentially distributed with mean~$r/2$. Positivity of~\eqref{eq:partialderivativelambdaexpression} is trivial. \\

\noindent
\textbf{Existence, positivity and expression of~$\partial_r \bfp$.}  By increasing~$r$, we do not act this time on the number~$\bfN$ of points drawn on the segment. We rather enlarge the area that they respectively span. The expression of the increment~$\bfp(\ell,\lambda,r+h)-\bfp(\ell,\lambda,r)$ as the probability of some event is then for any~$h>0$:
\begin{align}
\label{eq:expressionasaprobaincrementwrtr}
    \bfp(\ell,\lambda,r+h)-\bfp(\ell,\lambda,r)=\bbP\left([0,\ell] \not\subseteq \cup_{j=0}^{\bfN+1} [\bfI_j^r,\bfS_j^r]; \ [0,\ell] \subseteq \cup_{j=0}^{\bfN+1} [\bfI_j^{r+h},\bfS_j^{r+h}]\right).
\end{align}
A hole in the coverage is necessarily of the form~$\left(\bfS_{i}^r,\bfI_j^r\right)$, where~$0 \leq i \neq j \leq \bfN+1$ are such that~$\bfS_i^r<\bfI_j^r$ and~$\left(\bfS_{i}^r,\bfI_j^r\right)\cap\left[\bfI_k^r,\bfS_k^r\right]=\emptyset$ for any~$k \notin \{i,j\}$. Filling it by stretching extremities of intervals entails the existence of some other~$0 \leq m \neq q \leq \bfN+1$ satisfying both~$\bfS_m^r \leq \bfS_i^r < \bfI_j^r \leq \bfI_q^r$ and~$\bfI_q^{r+h} \leq \bfS_i^r < \bfI_j^r \leq \bfS_m^{r+h}$. In particular, we have~$\left|\bfI_q^r -\bfS_m^r \right| \leq \frac{h}{2} \left(\bdcalE_m + \bdcalE_q\right)$, or equivalently:
\[\bdcalE_m + \bdcalE_q \in \left[2 \frac{\bfU_q-\bfU_m}{r+h},2 \frac{\bfU_q-\bfU_m}{r-h}\right].\]
The probability of such event is of order~$h$ as~$h \to 0$. That of the whole picture just described is a~$\calO(h^2)$ as~$m \neq i$ or~$q \neq j$, because the random variables~$\bfU_0, \dotsc, \bfU_{\bfN+1}, \bdcalE_0, \dotsc, \bdcalE_{\bfN+1}$ are mutually independent, conditionally on~$\bfN$. This is all the more true as several distinct holes coexist. 

Return now to a framework where the number of points spread over~$[0,\ell]$ is a deterministic integer~$N \geq 0$. We introduce two key functions. For any~$0 \leq i \neq j \leq N+1$ and any~$0 \leq x \leq y \leq \ell$, we set
\begin{align}
\label{eq:defphi}
    \Phi_{i,j}^{N,r}(x,y)= \bbP\left(\forall k \notin \{i,j \}, \ [\bfI_{k}^r,\bfS_{k}^r]\cap [\bfS_i^r,\bfI_j^r] =\emptyset \right) = \prod_{\underset{k \neq i,j}{k=0}}^{N+1}\mathbb{P}\big([\bfI_{k}^r,\bfS_{k}^r]\cap [x,y] =\emptyset \big).
\end{align}
We also define:
\begin{align}
\label{eq:expressionpartialderivativeswrtrdecompos}
   \calR_{i,j}^{N,r}(h) &= \bbP\left(\forall k \notin \{i,j \}, \ [\bfI_{k}^r,\bfS_{k}^r]\cap [\bfS_i^r,\bfI_j^r] =\emptyset; \ \bfS_i^r < \bfI_j^r, \ \bfI_j^{r+h} \leq \bfS_i^{r+h}  \right) \nonumber \\
   &=\bbE\left[ \Phi(\bfS_i^r,\bfI_j^r) \mathds{1}_{\bfS_{i}^r < \bfI_{j}^r } \mathds{1}_{\bfS_{i}^{r+h}\geq \bfI_{j}^{r+h}}\right].  
\end{align}
Therefore, given~\eqref{eq:expressionasaprobaincrementwrtr} and what has been said about holes, we derive that:
\begin{align}
\label{eq:incrementwrtapprox}
    \bfp(\ell,\lambda,r+h)-\bfp(\ell,\lambda,r) = \bbE\left[\sum_{0 \leq i \neq j \leq \bfN +1 } \calR_{i,j}^{\bfN,r} (h) + o(h) \right].
\end{align}
The existence of~$\partial_r \bfp$ then results from successive applications of Lebesgue's theorems. 
Fix~$1 \leq i \neq j \leq N$, two internal points. Let~$\bfs_i^r$ and~$\bfi_j^r$ be the joint density of~$(\bfS_i^r, \bfR_i^r)$ and~$(\bfI_j^r,\bfR_j^r)$ respectively. It holds that:
    \begin{align}
        \calR_{i,j}^{N,r}(h)  &= \int_{[0,\ell]^4} \Phi(u,x) \bfs_i^r (u,v) \bfi_j^r(x,y) \mathds{1}_{u\leq x } \mathds{1}_{u+\frac{\ee}{r} v \geq x - \frac{\ee}{r} y} \mathds{1}_{v+y \leq \ell}  \ \mathrm{d}u \ \mathrm{d}v \ \mathrm{d}x \ \mathrm{d}y \nonumber \\
        &= \int_{[0,\ell]^3} \mathrm{d}u \ \mathrm{d}v \ \mathrm{d}y \ \bfs_i^r (u,v) \mathds{1}_{v+y \leq \ell} \left(\int_{u}^{u+\frac{\ee}{r} (v+y)} \Phi(u,x)  \bfi_j^r(x,y) \ \mathrm{d}x\right).  \nonumber
    \end{align}
    Since~$\Phi_{i,j}^{N,r}$ is bounded, the Lebesgue's dominated convergence theorem, coupled with the Lebesgue differentiation theorem, implies that~$\calR_{i,j}^{N,r}(h)$ has a finite limit as~$h \to 0$:
    \begin{align}
    \label{eq:integralrepresentationR}
        \frR_{i,j}^{N,r} := \lim_{h \to 0}  h^{-1} \calR_{i,j}^{N,r}(h) = \frac{1}{r} \int_{[0,\ell]^3} (v+y) \Phi(u,u) \bfs_i^r(u,v) \bfi_j^r(u,y) \mathds{1}_{v+y \leq \ell}  \ \mathrm{d}u \ \mathrm{d}v \ \mathrm{d}y.
    \end{align}
    Note that the value of the above quantity does not vary with the pair~$i \neq j$, so is constantly equal to~$\frR_{1,2}^{N,r}$. In cases where either~$i$ or~$j$ is a boundary point, the associated joint density degenerates, because~$\bfS_0^r=\bfR_0^r$ and~$\bfI_{N+1}^r=\ell-\bfR_{N+1}^r$. This a however a false problem. Based on similar arguments, the limit~\eqref{eq:integralrepresentationR} still exists. The integral representation must be modified, though. For~$i=0$ and~$1 \leq j \leq N$---that is a mixed situation with one boundary point and one internal point, we get in the same vein that:
    \begin{align}    \label{eq:integralrepresentationbisR}
        \frR_{0,j}^{N,r} = \frac{1}{r} \int_{[0,\ell]^2} \Phi(u,u) (u+y) \bfs^r(u) \bfi_j^r(u,y)  \mathds{1}_{u+y \leq \ell} \ \mathrm{d}u \ \mathrm{d}y,
    \end{align}
    with~$\bfs^r$ the density of~$\bfS_0^r$. Like~\eqref{eq:integralrepresentationR}, the latter formula does not depend on~$j$, meaning that~$\frR_{0,j}^{N,r}=\frR_{0,1}^{N,r}$ for every~$j$. Furthermore, given the model is invariant in distribution after reflecting points over the vertical line~$x=1/2$, it does not change anything by taking instead~$i=N+1$ and~$1 \leq j \leq N$.   
    Finally, when~$i=0$ and~$j=N+1$---both are an extremity of the segment~$[0,\ell]$, we obtain that 
\begin{align}
\label{eq:integralrepresentationterR}
        \frR_{0,N+1}^{N,r}  = \frac{\ell}{r} \int_{[0,\ell]} \Phi(u,u) \bfs^r(u) \bfi^r(u)  \ \mathrm{d}u,
    \end{align}
    with~$\bfi^r$ the density of~$\bfI_{N+1}^r$. From now on, considering the observations just made, we simply write~$\frR_{\frb}^{N,r}$ for~$\frb \in \left\{0,1,2\right\}$, rather than~$\frR_{i,j}^{N,r}$ as~$\frb$ boundary points are among~$i$ and~$j$. 
    From all the foregoing, we deduce the almost sure convergence:
    \begin{align}
    \label{eq:aslimitR}
      \lim_{h \to 0} \frac{1}{h} \sum_{0 \leq i \neq j \leq \bfN +1 } \calR_{i,j}^{\bfN,r} =  \sum_{0 \leq i \neq j \leq \bfN +1 } \frR_{i,j}^{\bfN,r}=2  \frR_{2}^{\bfN,r} + 4 \bfN \frR_{1}^{\bfN,r} + \bfN (\bfN-1) \frR_{0}^{\bfN,r}.
    \end{align}
    To complete the proof, we need a suitable domination of the limit, by some integrable random variable. For this purpose, we remark that the function~$\Phi$ is the only term involved in the integrals~\eqref{eq:integralrepresentationR}, \eqref{eq:integralrepresentationbisR} or~\eqref{eq:integralrepresentationterR}, which does depend on~$N$. It can trivially be upper bounded by one, as the probability of an event---see~\eqref{eq:defphi}. Hence, for any~$N \geq 0$, we have~$\max\left\{\frR_{0}^{N,r}, \ \frR_{1}^{N,r}, \ \frR_{2}^{N,r}\right\} \leq K$,
    where~$K=K_{\ell,r}$ is the largest element among:
    \begin{align}
    \nonumber
        \bigg\{\frac{\ell}{r} \int_0^\ell \bfs^r(u) \bfi^r(u)   \mathrm{d}u, \ \frac{1}{r}\int (u+y) \bfs^r(u) \bfi_1^r(u,y)  \mathds{1}_{u+y \leq \ell} \ \mathrm{d}u \ \mathrm{d}y, \nonumber \\
        \frac{1}{r}\int_{[0,\ell]^3} (v+y)\bfs_1^r(u,v) \bfi_2^r(u,y) \mathds{1}_{v+y \leq \ell}  \ \mathrm{d}u \ \mathrm{d}v \ \mathrm{d}y \bigg\}. \nonumber 
    \end{align}
    It follows that almost surely:
\begin{align}
    \nonumber
    2  \frR_{2}^{\bfN,r} + 4 \bfN \frR_{1}^{\bfN,r} + \bfN (\bfN-1) \frR_{0}^{\bfN,r} \leq K \times \left( \bfN(\bfN-1) + 4 \bfN +2 \right),
    \end{align}
    namely the kind of domination that we were hoping for. By invoking the Lebesgue's dominated convergence theorem, we conclude from~\eqref{eq:incrementwrtapprox} and~\eqref{eq:aslimitR} that
    \begin{align}
    \label{eq:partialderivativewrtrdetailedexpression}
    \partial_{r} \bfp (\ell,\lambda, r)=\bbE\left[2  \frR_{2}^{\bfN,r} + 4 \bfN \frR_{1}^{\bfN,r} + \bfN (\bfN-1) \frR_{0}^{\bfN,r}\right],
    \end{align}
    which puts an end to the demonstration of the existence and positivity of~$\partial_r \bfp$.
    \\
    
\noindent
\textbf{Boundedness and comparison of the partial derivatives.} We eventually deal with the proof of~\eqref{eq:boundednesspartialderivativesonedimodel} and~\eqref{eq:ineqpartialderivativesonedimodel}. The formula~\eqref{eq:partialderivativelambdaexpression} relates~$\partial_\lambda \bfp$ to the probability of observing blank zones, vanishing after they are absorbed by the range of a newborn point. We aim to get a more tractable expression that mimicks~\eqref{eq:partialderivativewrtrdetailedexpression}---here it will be an upper bound actually---by using again our analysis of discontinuities in the coverage. As argued in the first lines of the previous paragraph, we know indeed that a hole is an interval of the form~$\left(\bfS_{i}^r,\bfI_j^r\right)$ with~$0 \leq i \neq j \leq \bfN+1$, such that~$\bfS_i^r<\bfI_j^r$ and~$\left(\bfS_{i}^r,\bfI_j^r\right)\cap\left[\bfI_k^r,\bfS_k^r\right]=\emptyset
$ for every~$k \notin \{i,j\}$. Then, set for any~$0 \leq x \leq y \leq \ell$:
\begin{align}
\label{eq:defpsi}
\Psi(x,y) = \bbP\left(\left[x,y\right] \subseteq \left[\bfI^r,\bfS^r\right] \right).
\end{align}
Recall that~$[\bfI^r,\bfS^r]$ is the interval that covers a point which is uniformly drawn and whose the range is exponentially distributed with mean~$r/2$. The function~$\Psi$ measures the chance that it fully contains some given area.
Let now~$N \geq 0$ be the deterministic number of points spread on the segment~$\left[0,\ell\right]$. On the model of~\eqref{eq:expressionpartialderivativeswrtrdecompos}, we define for any~$0 \leq i \neq j \leq N+1$:
\begin{align}
\label{eq:expressionpartialderivativeswrtlambdadecompos}
\frL_{i,j}^{N,r} &= \bbP\left(\forall k \notin \{i,j \}, \ [\bfI_{k}^r,\bfS_{k}^r]\cap [\bfS_i^r,\bfI_j^r] =\emptyset; \ \bfS_i^r < \bfI_j^r; \ \left( \bfS_i^r, \bfI_j^r\right) \subseteq [\bfI^r,\bfS^r]  \right) \nonumber \\
&=\bbE\left[ \Phi\left(\bfS_i^r,\bfI_j^r\right) \Psi\left(\bfS_i^r,\bfI_j^r \right) \mathds{1}_{\bfS_{i}^r < \bfI_{j}^r }\right],
\end{align}
namely the probability that the hole~$\left(\bfS_{i}^r,\bfI_j^r\right)$ is filled after a new point has arisen. 
As it has already been remarked earlier for the~$\frR_{i,j}^{N,r}$---see~\eqref{eq:integralrepresentationR}, the latter only depends in fact on how many boundary points we count, say~$\frb \in \left\{0,1,2\right\}$, among~$i$ and~$j$. So we likewise choose to abbreviate~$\frL_{i,j}^{N,r}$ as~$\frL_{\frb}^{N,r}$ in such case. Since there exists at least one hole in a sparse coverage, an union bound argument applied to~\eqref{eq:partialderivativelambdaexpression} implies that:
\begin{align}
\label{eq:partialderivativewrtrdetailedinequality}
\partial_\lambda \bfp (\ell,\lambda,r) \leq \ell \times \bbE\left[\sum_{0 \leq i \neq j \leq \bfN +1} \frL_{i,j}^{\bfN,r} \right] = \ell \times \bbE\left[2  \frL_{2}^{\bfN,r} + 4 \bfN \frL_{1}^{\bfN,r} + \bfN (\bfN-1) \frL_{0}^{\bfN,r} \right],
\end{align} 
which is the upper bound that we were looking for. Statements~\eqref{eq:boundednesspartialderivativesonedimodel} and~\eqref{eq:ineqpartialderivativesonedimodel} then result from a comparison between~$\frL_{\frb}^{N,r}$ and~$\frR_{\frb}^{N,r}$ that we detail in the following proposition:

\begin{prop}
    \label{prop:comparisonLambdaR}
    Set~$W_{\ell, r}=1-\frac{r}{2\ell}\left(1-e^{-\frac{2\ell}{r}}\right)$. Let~$\frb \in \left\{0,1,2\right\}$. There exists three functions~$\left(G_\frb^r,H_\frb^r,K_\frb^r\right)$, all positive, continuous and bounded on~$(0,+\infty)$, such that:
    \begin{align}
    \label{eq:comparisonLambdaR}
        \ell \cdot \frL_{\frb}^{N,r} \leq W_{\ell, r}^{N-2+\frb} G_\frb^r(\ell) \leq  W_{\ell, 2r}^{N-2+\frb} H_\frb^r(\ell) \leq \frR_{\frb}^{N,r} \leq K_\frb^r(\ell),
    \end{align}
    for any~$N \geq 0$. Furthermore, the first two satisfy:
    \begin{align}
    \label{eq:inequalityfunctionGH}
        G_\frb^r(\ell) \leq C_\frb(r) H_\frb^r(\ell),
    \end{align}
    for some continuous map~$C_\frb: (0,+\infty) \mapsto (0,+\infty)$ independent of~$\ell$.
\end{prop}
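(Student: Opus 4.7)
The plan is to establish the four inequalities of \eqref{eq:comparisonLambdaR} separately by explicit integral manipulation of $\frL_\frb^{N,r}$ and $\frR_\frb^{N,r}$, and then to read off \eqref{eq:inequalityfunctionGH} from the constructed $G_\frb^r,H_\frb^r$. The rightmost bound $\frR_\frb^{N,r} \leq K_\frb^r(\ell)$ is immediate: bounding $\Phi(u,u)\le 1$ in the representations \eqref{eq:integralrepresentationR}, \eqref{eq:integralrepresentationbisR}, \eqref{eq:integralrepresentationterR} reduces $\frR_\frb^{N,r}$ to an $N$-independent integral of the joint densities of $(\bfS_i^r,\bfR_i^r)$ and $(\bfI_j^r,\bfR_j^r)$, which in turn is continuous and bounded in $\ell$ thanks to the explicit exponential form of these densities.

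For the leftmost inequality $\ell \cdot \frL_\frb^{N,r} \leq W_{\ell,r}^{N-2+\frb} G_\frb^r(\ell)$, the strategy is to decompose the product defining $\Phi(x,y)$ in \eqref{eq:defphi} over the $N-2+\frb$ internal indices $k \notin \{i,j\}$ and the remaining at most two boundary ones. Each internal factor $\bbP([\bfI_k^r,\bfS_k^r] \cap [x,y] = \emptyset)$ is nonincreasing in $[x,y]$ for inclusion, hence maximised when $[x,y]$ degenerates to a boundary endpoint of $[0,\ell]$; a direct calculation in that worst case identifies the probability with $W_{\ell,r}$. One thus obtains $\Phi(x,y) \leq W_{\ell,r}^{N-2+\frb}$ times a product of boundary factors bounded by $1$. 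Combined with $\Psi \leq 1$ and an outer expectation that carries no remaining $N$-dependence, this yields $G_\frb^r(\ell)$; continuity and boundedness of the latter in $\ell$ follow from dominated convergence together with the elementary bound $\Psi(x,y) \leq e^{-(y-x)/r}$ coming from the exponential range of a typical newborn point.

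The third inequality $W_{\ell,2r}^{N-2+\frb} H_\frb^r(\ell) \leq \frR_\frb^{N,r}$ is the analytic heart of the proposition. From the integral representations I would restrict the domain of integration to a subregion on which $u$ lies close to an endpoint of $[0,\ell]$, so that $\Phi(u,u)$ is close to its maximum. On this subregion, an elementary inequality relating the miss probabilities for an internal point (with expected range $r/2$) and for a boundary-type doubled-range point (with expected range $r$)---exactly the quantities defining $W_{\ell,r}$ and $W_{\ell,2r}$ respectively---provides a lower bound of the form $\Phi(u,u) \geq W_{\ell,2r}^{N-2+\frb}$ up to harmless boundary corrections. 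The residual integrals against the density factors and the linear weights $(v+y)$ or $(u+y)$ are then $N$-independent and define $H_\frb^r(\ell)$.

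The middle inequality, together with \eqref{eq:inequalityfunctionGH}, is then a bookkeeping step: by construction both $G_\frb^r$ and $H_\frb^r$ are integrals of the same densities over comparable domains, so that their ratio is controlled by a continuous function $C_\frb(r)$ which encapsulates only the difference in weighting between the indicator $\Psi(x,y)$ appearing in $\frL$ and the linear factor $(y-x)$ appearing in $\frR$. I expect the third inequality to be the main obstacle: the lower bound on $\Phi(u,u)$ must produce exactly the exponent $N-2+\frb$ with constants uniform in $N$ and $\ell$, and the bookkeeping over the three cases $\frb \in \{0,1,2\}$ with the different densities $\bfs^r, \bfi^r, \bfs_i^r, \bfi_j^r$ is where the technical weight of the argument concentrates.
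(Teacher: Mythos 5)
Your rough skeleton---bound the factor~$\Phi$ by powers of~$W$ through a monotonicity argument, exploit the exponential decay of~$\Psi$ for the boundedness of~$G_\frb^r$, and bound~$\Phi\le 1$ to get~$K_\frb^r$---matches the paper's outline for the first and last inequalities of the chain~\eqref{eq:comparisonLambdaR}. Two of your steps, however, do not hold up.

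Your handling of the third inequality~$W_{\ell,2r}^{N-2+\frb} H_\frb^r(\ell) \le \frR_\frb^{N,r}$ is off. The paper does not restrict the integration domain; it uses the \emph{uniform} two-sided bound~$W_{\ell,2r} \le \varphi(x,x) \le W_{\ell,r}$ for every~$x\in[0,\ell]$ (Corollary~\ref{cor:globalboundsemptypointoverlap}). The value~$W_{\ell,2r}$ is the \emph{minimum} of~$\varphi(x,x)$, attained at the midpoint~$x=\ell/2$; it is a fact about the internal-point miss probability and has nothing to do with boundary or doubled-range points as you suggest. Your plan to restrict to a subregion where~$\Phi(u,u)$ is ``close to its maximum'' points in the wrong direction: what you actually need for a lower bound with exponent~$N-2+\frb$ uniform in~$N$ is that~$\varphi(u,u)\ge W_{\ell,2r}$, which is a minimum estimate and holds on the whole domain. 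Restricting the domain would only shrink~$H_\frb^r$ and weaken the bound.

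You also dismiss~\eqref{eq:inequalityfunctionGH} as ``a bookkeeping step'' with~$G_\frb^r$ and~$H_\frb^r$ being ``integrals of the same densities over comparable domains.'' They are not. After the reductions,~$G_\frb^r$ is essentially an exponential moment~$\bbE[e^{(\bfS_i^r+\bfS_j^r)/r}\mathds{1}_{\dots}]$ multiplied by~$re^{-\ell/r}(1-e^{-\ell/r})^{2-\frb}$, while~$H_\frb^r$ carries linear weights~$(v+y)$, different joint densities, and boundary correction factors~$1-e^{-x/r}$. Showing that~$\sup_{\ell>0} G_\frb^r(\ell)/H_\frb^r(\ell)$ is finite \emph{and} depends continuously on~$r$ requires computing both in closed form in each of the three cases~$\frb\in\{0,1,2\}$, proving they vanish at the same rate as~$\ell\to 0$ (e.g.~$G_0^r\sim\ell^4/6r^3$ vs.~$H_0^r\sim\ell^4/15r^5$) and as~$\ell\to+\infty$ (both~$\sim r^2\ell^{-1}$ up to constants), and establishing locally uniform convergence in~$r$ in both limits. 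This case analysis is the bulk of Section~\ref{subsec:casesinvestigationproofcomparison}, and without it you have neither the finiteness of~$C_\frb$ nor its continuity in~$r$, both of which are used downstream in the proof of Item~4.
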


The proof of Proposition~\ref{prop:comparisonLambdaR} is postponed to the next paragraph. We  first show how to complete that of Proposition~\ref{prop:coveringasegment}, and start with~\eqref{eq:ineqpartialderivativesonedimodel}. The leftmost inequality in~\eqref{eq:comparisonLambdaR}, coupled with~\eqref{eq:partialderivativewrtrdetailedinequality}, ensures that:
\begin{align}
\label{eq:finalupperboundpartialderivativeswrtl}
    \partial_\lambda \bfp (\ell,\lambda,r) &\leq 2 G_2^r (\ell) \bbE\left[W_{\ell, r}^\bfN \right] + 4 G_1^r(\ell) \bbE\left[\bfN W_{\ell, r}^{\bfN-1} \right] + G_0^r(\ell) \bbE\left[\bfN\left(\bfN-1\right)\bfN W_{\ell, r}^{\bfN-2} \right] \nonumber \\
    &= 2 G_2^r(\ell)e^{\lambda \ell \left(W_{\ell, r}-1 \right)} + 4 G_1^r(\ell) \lambda \ell e^{\lambda \ell \left(W_{\ell, r}-1 \right)} + G_0^r(\ell) \lambda^2 \ell^2 e^{\lambda \ell \left(W_{\ell, r}-1 \right)}.
\end{align}
We use in the last line that the generating function of a Poisson distribution of parameter~$\lambda \ell$ is~$t \mapsto e^{\lambda \ell(t-1)}$. In exactly the same way, we derive this time from~\eqref{eq:partialderivativewrtrdetailedexpression} and from the second rightmost inequality that:
\begin{align}
\label{eq:finalinequalitiespartialderivativeswrtr}
   2 H_2^r(\ell)e^{\lambda \ell \left(W_{\ell, 2r}-1 \right)} + 4 H_1^r(\ell) \lambda \ell e^{\lambda \ell \left(W_{\ell, 2r}-1 \right)} + H_0^r(\ell) \lambda^2 \ell^2 e^{\lambda \ell \left(W_{\ell, 2r}-1 \right)}  \leq \partial_r \bfp (\ell,\lambda,r).
\end{align}
It directly follows from~\eqref{eq:finalupperboundpartialderivativeswrtl}, \eqref{eq:finalinequalitiespartialderivativeswrtr} and \eqref{eq:inequalityfunctionGH} that:
\begin{align}
    e^{\lambda \ell \left(W_{\ell, 2r}-W_{\ell, r} \right)} \times \partial_\lambda \bfp (\ell,\lambda,r) \leq C(r) \partial_r \bfp (\ell,\lambda,r), \nonumber
\end{align}
for~$C:=\max\left\{C_0,C_1,C_2\right\}$. The map~$C$ is continuous given that the~$C_\frb$ all are.
This is enough to get~\eqref{eq:ineqpartialderivativesonedimodel} since~$W_{\ell, 2r}-W_{\ell, r} \geq -r / 2 \ell$. Finally, we deduce the boundedness assertion~\eqref{eq:boundednesspartialderivativesonedimodel} from~\eqref{eq:partialderivativewrtrdetailedexpression} and the rightmost inequality in~\eqref{eq:comparisonLambdaR}. Indeed, we have:
\begin{align}
\nonumber
    \partial_r \bfp (\ell,\lambda,r) \leq \left( \max_\frb \sup_\ell K_\frb^r(\ell) \right) \times \left(\bbE\left[2 + 4 \bfN + \bfN \left(\bfN-1\right)  \right] \right).
\end{align}
The right hand side above is finite because the functions~$K_\frb^r$ are said to be bounded. So is then~$\ell \mapsto \partial_r \bfp (\ell,\lambda,r)$. The comparison~\eqref{eq:ineqpartialderivativesonedimodel} allows us to extend the result to~$\ell \mapsto \partial_\lambda \bfp (\ell,\lambda,r)$ at once, which yields the expected conclusion.  

\subsection{Proof of Proposition~\ref{prop:comparisonLambdaR}: cases investigation.}
\label{subsec:casesinvestigationproofcomparison}
Three situations arise in Proposition~\ref{prop:comparisonLambdaR}, depending on the number of boundary points which are involved in the definition of the quantities~$\frL_{i,j}^{N,r}$ and~$\frR_{i,j}^{N,r}$, those that we aim to compare. We start the demonstration with some preliminary observations. They will constitute a common guideline for the analysis of each case. 

First, we provide an exact formula for the probability that the surface covered by a point uniformly drawn on~$\left[0,\ell\right]$ does not interset some given interval:

\begin{lem}[Blank zone]
\label{lem:emptyoverlap}
Set for any~$0 \leq x \leq y \leq \ell$:
\[\varphi(x,y)=\mathbb{P}\left(\left[\bfI^r, \bfS^r\right]\cap \left[x,y\right] =\emptyset\right).\]
Then:
\begin{align}
\label{eq:blankzones}
    \varphi(x,y)=\frac{1}{\ell}\left(\ell-(y-x)-\frac{r}{2}\left(2-e^{-2 \frac{\ell-y}{r}}-e^{- \frac{2x}{r}}\right)\right).
\end{align}
\end{lem}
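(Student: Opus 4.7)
The plan is to compute $\varphi(x,y)$ directly by decomposing the "no intersection" event into two disjoint sub-events corresponding to the relative position of the covered interval $[\bfI^r,\bfS^r]=[\bfU-\bfR,\bfU+\bfR]$ and the given interval $[x,y]$. Since $[\bfU-\bfR,\bfU+\bfR]$ is a non-degenerate interval almost surely, the event $[\bfI^r,\bfS^r]\cap[x,y]=\emptyset$ splits into
\[
\{\bfU+\bfR<x\}\cup\{\bfU-\bfR>y\},
\]
and the two events are disjoint. Therefore $\varphi(x,y)=\mathbb{P}(\bfU+\bfR<x)+\mathbb{P}(\bfU-\bfR>y)$.

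Next, I would compute each probability by conditioning on $\bfU$ (uniform on $[0,\ell]$) and using the fact that $\bfR=\tfrac{r}{2}\bdcalE$ with $\bdcalE\sim\mathcal{E}xp(1)$, so $\bfR$ has cumulative distribution function $t\mapsto 1-e^{-2t/r}$ on $[0,\infty)$. For the first term:
\[
\mathbb{P}(\bfU+\bfR<x)=\frac{1}{\ell}\int_0^x\bigl(1-e^{-2(x-u)/r}\bigr)\,du=\frac{1}{\ell}\Bigl(x-\tfrac{r}{2}\bigl(1-e^{-2x/r}\bigr)\Bigr),
\]
after the change of variable $v=x-u$. The second term is handled symmetrically: conditioning on $\bfU=u$ with $u>y$ gives $\mathbb{P}(\bfR<u-y)$, and integrating yields
\[
\mathbb{P}(\bfU-\bfR>y)=\frac{1}{\ell}\Bigl((\ell-y)-\tfrac{r}{2}\bigl(1-e^{-2(\ell-y)/r}\bigr)\Bigr).
\]

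Finally, summing both expressions and rearranging gives
\[
\varphi(x,y)=\frac{1}{\ell}\Bigl(\ell-(y-x)-\tfrac{r}{2}\bigl(2-e^{-2(\ell-y)/r}-e^{-2x/r}\bigr)\Bigr),
\]
which is exactly the claimed formula \eqref{eq:blankzones}. There is no real obstacle here; the computation is elementary once the decomposition into the two one-sided misses has been made, and the symmetry $u\mapsto \ell-u$ can be invoked to avoid redoing the second integral from scratch if desired.
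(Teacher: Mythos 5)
Your proof is correct and is precisely the elementary computation the paper alludes to (it writes that the proof "consists of the easy computation of an integral, whose details are left to the reader"). The decomposition into the two disjoint one-sided misses $\{\bfU+\bfR<x\}$ and $\{\bfU-\bfR>y\}$, conditioning on $\bfU$, and integrating the exponential CDF of $\bfR=\tfrac{r}{2}\bdcalE$ over $[0,x]$ and $[y,\ell]$ respectively is exactly the intended route, and your two intermediate integrals and their sum check out.
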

When the point is rather stuck at an extremity of the segment, endowed with a doubled expected range, the above formula is simpler:
\begin{align}
\label{eq:blankzonesBP}
\mathbb{P}\left(\left[-\bfR,\bfR\right]\cap \left[x,y\right] =\emptyset\right)=1-e^{-\frac{x}{r}} \quad \text{and} \quad \mathbb{P}\left(\left[\ell-\bfR,\ell+\bfR\right]\cap \left[x,y\right] =\emptyset\right)=1-e^{-\frac{\ell-y}{r}}.
\end{align}

Distributions of~$\bfI^r$ and~$\bfS^r$ being explicit, the proof of Lemma~\ref{lem:emptyoverlap} consists of the easy computation of an integral, whose details are left to the reader. 
Since not overlapping an interval is harder than a single point, it is always true that~$\varphi(x,y) \leq \varphi(x,x)$. Furthermore, on the main diagonal, the function~$\varphi$ admits both lower and upper bounds:
\begin{cor}
   \label{cor:globalboundsemptypointoverlap}
    Let~$W_{\ell,r}$ be defined as in Proposition~\eqref{prop:comparisonLambdaR}. For any~$0 \leq x \leq \ell$, we have~$W_{\ell,2r} \leq \varphi(x,x) \leq W_{\ell,r}$. 
    \end{cor}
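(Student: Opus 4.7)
The plan is to plug $y=x$ into the explicit formula~\eqref{eq:blankzones} of Lemma~\ref{lem:emptyoverlap} and reduce both bounds to elementary inequalities for the exponential function. Setting $y=x$ gives
\[
\varphi(x,x) = 1 - \frac{r}{2\ell}\Big((1-e^{-2(\ell-x)/r}) + (1-e^{-2x/r})\Big).
\]
Introducing $a := 2(\ell-x)/r \geq 0$ and $b := 2x/r \geq 0$, we have $a+b = 2\ell/r =: s$, so the task is to show
\[
1 - \tfrac{1}{s}\big(2 - e^{-a} - e^{-b}\big) \in \Big[\, W_{\ell,2r},\; W_{\ell,r}\,\Big],
\]
where $W_{\ell,r} = 1 - \tfrac{1}{s}(1-e^{-s})$ and $W_{\ell,2r} = 1 - \tfrac{2}{s}(1-e^{-s/2})$.

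For the upper bound $\varphi(x,x) \leq W_{\ell,r}$, I would rewrite the required inequality as
\[
2 - e^{-a} - e^{-b} \geq 1 - e^{-(a+b)},
\]
which reorganizes as $(1-e^{-a})(1-e^{-b}) \geq 0$; this is immediate since $a,b \geq 0$.

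For the lower bound $\varphi(x,x) \geq W_{\ell,2r}$, the inequality reduces to
\[
e^{-a} + e^{-b} \geq 2\, e^{-(a+b)/2},
\]
which is precisely the AM--GM inequality applied to $e^{-a}$ and $e^{-b}$.

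I do not expect any real obstacle here; both bounds are equivalent to very classical inequalities, and the formula of Lemma~\ref{lem:emptyoverlap} does all the heavy lifting. The only thing to double-check is the degenerate endpoint cases $x=0$ and $x=\ell$ (where one factor of $(1-e^{-a})$ or $(1-e^{-b})$ vanishes), but in each case both bounds are easily verified directly.
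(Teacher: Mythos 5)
Your proof is correct and complete. The paper explicitly omits the proof of this corollary (``Here also we do not write the details of the elementary demonstration''), so there is no paper argument to compare against; your computation simply supplies the details the authors left as an exercise. The substitution $a=2(\ell-x)/r$, $b=2x/r$ with $a+b=s=2\ell/r$ is the natural one, the upper bound reduces cleanly to $(1-e^{-a})(1-e^{-b})\ge 0$, and the lower bound reduces to the AM--GM inequality $e^{-a}+e^{-b}\ge 2e^{-(a+b)/2}$; I verified both algebraic reorganizations. The endpoint remark is fine, though unnecessary, since both inequalities already hold for all $a,b\ge 0$ including $a=0$ or $b=0$.
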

    Here also we do not write the details of the elementary demonstration. Note that~$W_{\ell,r}$ is even a global upper bound for~$\varphi$, given the remark made just above. As~$\ell \to 0$, it turns out that:
    \begin{align}
    \label{eq:asymptoticW}
        W_{\ell,r} \sim \ell/r.
    \end{align} 
    It implies that~$\varphi$ is of linear order w.r.t.~$\ell$ in the small length regime. Corollary~\ref{cor:globalboundsemptypointoverlap} will be useful to control the function~$\Phi$, which appears in the definitions of~$\frL_{\frb}^{N,r}$ and~$\frR_{\frb}^{N,r}$. There is indeed an obvious connection with~$\varphi$. See~\eqref{eq:defphi}. For any~$N \geq 0$ and~$0 \leq i \neq j \leq N+1$:
    \begin{align}
    \label{eq:relationPhivarphi}
      \Phi_{i,j}^{N,r}(x,y)= \left(1-e^{-\frac{x}{r}} \right)^{\mathds{1}_{0 \notin \left\{i,j \right\}}} \times \left(1-e^{-\frac{\ell-y}{r}} \right)^{\mathds{1}_{N+1 \notin \left\{i,j \right\}}} \times \varphi(x,y)^{N-2+\frb},
    \end{align}
    where~$\frb$ is as usual the number of boundary points among~$i$ and~$j$. The first two terms in the above product come from~\eqref{eq:blankzonesBP}. They correspond to the (potential) contribution of boundary points.
    
We continue with a second lemma. This time is computed the probability to fully cover some fixed interval:
\begin{lem}[Filling a hole]
    \label{lem:fullcoverage}
    Recall the definition~\eqref{eq:defpsi} of~$\Psi$. For any~$0 \leq x \leq y \leq \ell$:
    \begin{align}
        \label{eq:fullcoverage}
        \Psi(x,y)=\frac{r}{2 \ell}\left(2e^{-\frac{y-x}{r}}-e^{-2 \frac{\ell-x}{r}}-e^{-\frac{2y}{r}}\right).
    \end{align}
\end{lem}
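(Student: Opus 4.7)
The plan is to compute $\Psi(x,y)$ by conditioning on the location of the center $\bfU$ of the covering interval. The key observation is that, for $0 \leq x \leq y \leq \ell$, the inclusion $[x,y] \subseteq [\bfU - \bfR,\, \bfU + \bfR]$ is equivalent to the single scalar inequality $\bfR \geq M(\bfU)$, where $M(u) := \max(y - u,\, u - x)$. This quantity is nonnegative for every $u \in \bbR$ (with minimum value $(y-x)/2$ reached at $u = (x+y)/2$), so the equivalence holds unconditionally, combining both endpoint constraints $\bfU - \bfR \leq x$ and $\bfU + \bfR \geq y$ into one. This symmetric reformulation is precisely what avoids the cumbersome case analysis on the positions of $\bfU \pm \bfR$ relative to $\{0, x, y, \ell\}$ that a direct integration of the joint density would entail.

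Using that $\bfR = \tfrac{r}{2}\bdcalE$ with $\bdcalE \sim \mathrm{Exp}(1)$, so that $\bbP(\bfR \geq s) = e^{-2s/r}$ for $s \geq 0$, and that $\bfR$ is independent of $\bfU$ uniformly distributed on $[0,\ell]$, conditioning on $\bfU$ yields
\[
\Psi(x,y) \;=\; \bbE\bigl[e^{-\frac{2}{r} M(\bfU)}\bigr] \;=\; \frac{1}{\ell}\int_0^\ell e^{-\frac{2}{r}\max(y - u,\, u - x)}\,\mathrm{d}u.
\]
The next step is to split the integral at the midpoint $m := (x+y)/2 \in [0,\ell]$, at which the two affine functions $y - u$ and $u - x$ coincide. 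On $[0,m]$ the integrand is $e^{-2(y-u)/r}$ while on $[m,\ell]$ it is $e^{-2(u-x)/r}$; each piece is an elementary exponential integral, giving respectively
\[
\tfrac{r}{2}\bigl(e^{-(y-x)/r} - e^{-2y/r}\bigr) \qquad \text{and} \qquad \tfrac{r}{2}\bigl(e^{-(y-x)/r} - e^{-2(\ell-x)/r}\bigr),
\]
upon using $y - m = m - x = (y-x)/2$. Dividing the sum by $\ell$ produces exactly the claimed expression $\frac{r}{2\ell}\bigl(2e^{-(y-x)/r} - e^{-2(\ell-x)/r} - e^{-2y/r}\bigr)$.

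There is essentially no obstacle beyond identifying the right reformulation of the inclusion event: once the single-constraint form $\bfR \geq M(\bfU)$ is in hand, the remainder is a routine exponential integration, consistent with the remark made for Lemma~\ref{lem:emptyoverlap} (whose proof is likewise delegated to the reader). As a sanity check, one may verify the identity $\Psi(x,x) + \varphi(x,x) = 1$ against the expression of $\varphi$ given in~\eqref{eq:blankzones}, reflecting the dichotomy that a single point is either covered by $[\bfI^r,\bfS^r]$ or disjoint from it, with no intermediate regime.
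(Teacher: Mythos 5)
Your proof is correct. The paper omits the proof of Lemma~\ref{lem:fullcoverage} entirely (``The proof is once again skipped as it rests on basic analysis''), so there is no official argument to compare against; what you write supplies exactly the missing ``basic analysis.'' Your key simplification --- recasting the inclusion $[x,y] \subseteq [\bfU - \bfR, \bfU + \bfR]$ as the single inequality $\bfR \geq \max(y-\bfU,\, \bfU - x)$, which is automatically nonnegative so that $\bbP(\bfR \geq s) = e^{-2s/r}$ applies without case splitting --- is the cleanest route, and the subsequent split of the integral at the midpoint $(x+y)/2 \in [0,\ell]$ and the two elementary exponential integrals check out and reproduce the stated formula. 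The sanity check $\Psi(x,x) + \varphi(x,x) = 1$ against~\eqref{eq:blankzones} also verifies. No gaps.
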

The proof is once again skipped as it rests on basic analysis. We are now in good position to define the maps~$\left(G_\frb^r,H_\frb^r,K_\frb^r\right)$ of Proposition~\ref{prop:comparisonLambdaR}.
From~\eqref{eq:expressionpartialderivativeswrtlambdadecompos} and~\eqref{eq:fullcoverage}, we deduce an upper bound on~$\ell \cdot \frL_{i,j}^{N,r}$, valid for any~$0 \leq i \neq j \leq N+1$: 
\begin{align}
    \ell \cdot \frL_{i,j}^{N,r}&=\frac{r}{2} \cdot \bbE\left[\Phi(\bfS_i^r,\bfI_j^r) \left(2e^{- \frac{\bfI_j^r-\bfS_i^r}{r}}-e^{-2\frac{\ell-\bfS_i^r}{r}}-e^{-\frac{2 \bfI_j^r}{r}}\right) \mathds{1}_{\bfS_{i}^r < \bfI_{j}^r}\right] \nonumber \\
    &\leq r \cdot \bbE\left[\Phi(\bfS_i^r,\bfI_j^r) e^{- \frac{\bfI_j^r-\bfS_i^r}{r}} \mathds{1}_{\bfS_{i}^r < \bfI_{j}^r}\right]. \nonumber
\end{align}
 When~$1 \leq j \leq N$ (and~$0 \leq \frb \leq 1$), it holds that~$\bfI_j^r$ is distributed as~$\ell - \bfS_j^r$, due to the reflection symmetry over the vertical line~$x=1/2$. The last inequality can then be rephrased in the more convenient way:
\begin{align}
    \ell \cdot \frL_{i,j}^{N,r}\leq r e^{-\frac{\ell}{r}} \bbE\left[\Phi(\bfS_i^r,\bfI_j^r) e^{\frac{\bfS_i^r+\bfS_j^r}{r}} \mathds{1}_{\bfS_{i}^r + \bfS_j^r < \ell}\right]. \nonumber
\end{align}
We get from~\eqref{eq:relationPhivarphi} that:
\begin{align}
\label{eq:lambdamoreexplicit}
    \ell \cdot \frL_{i,j}^{N,r}\leq W_{\ell,r}^{N-2+\frb} \left(1-e^{-\frac{\ell}{r}} \right)^{2-\frb} r e^{-\frac{\ell}{r}}  \bbE\left[e^{\frac{\bfS_i^r+\bfS_j^r}{r}} \mathds{1}_{\bfS_{i}^r + \bfS_j^r < \ell}\right],
\end{align}
by using that~$1-e^{-\frac{x}{r}} \leq 1-e^{-\frac{\ell}{r}}$ for~$x \in \left[0,\ell\right]$ and also that~$W_{\ell,r}$ globally dominates~$\varphi$ according to Corollary~\ref{cor:globalboundsemptypointoverlap}. It invites us to set:
\begin{align}
\label{eq:defGonetwo}
   G_\frb^r (\ell) := r e^{-\frac{\ell}{r}}  \left(1-e^{-\frac{\ell}{r}} \right)^{2-\frb} \bbE\left[e^{\frac{\bfS_i^r+\bfS_j^r}{r}} \mathds{1}_{\bfS_{i}^r + \bfS_j^r < \ell}\right],
\end{align}
 for~$0 \leq \frb \leq 1$. When~$j=N+1$, given~$\bfI_{N+1}^r=\ell-\bfR_{N+1}^r$, the same exact reasoning leads to:
 \begin{align}
\label{eq:defGzero}
    G_2^r(\ell) := r e^{-\frac{\ell}{r}}   \bbE\left[e^{\frac{\bfR_0^r+\bfR_{N+1}^r}{r}} \mathds{1}_{\bfR_{0}^r + \bfR_{N+1}^r < \ell}\right].
\end{align}

Such work can equally be done with~$H_\frb^r$ and~$K_\frb^r$, which respectively minorizes and majorizes~$\frR_{\frb}^r$. On a case-by-case basis because the integral representation of the latter depends on the value of~$\frb$. See~\eqref{eq:integralrepresentationR},~\eqref{eq:integralrepresentationbisR} and~\eqref{eq:integralrepresentationterR}. We rely on the following inequalities:
\begin{align}
    \left(1-e^{-\frac{x}{r}} \right)^{\mathds{1}_{0 \notin \left\{i,j \right\}}} \times \left(1-e^{-\frac{\ell-x}{r}} \right)^{\mathds{1}_{N+1 \notin \left\{i,j \right\}}} \times W_{\ell,2r}^{N-2+\frb} \leq \Phi_{i,j}^{N,r}(x,y) \leq 1, \nonumber
\end{align}
the first being a consequence of Corollary~\ref{cor:globalboundsemptypointoverlap}. Hence, we define for~$\frb=2$:
\begin{align}
\label{eq:definitionHKzero}
    H_2^r (\ell)=K_2^r (\ell)=\frac{\ell}{r} \int_{[0,\ell]} \bfs^r(u) \bfi^r(u)  \ \mathrm{d}u.
\end{align}
It must be reminded that~$\bfs^r$ and~$\bfi^r$ denote the densities of~$\bfS_0^r$ and~$\bfI_{N+1}^r$ respectively.
For~$\frb=1$:
\begin{align}
\label{eq:definitionHtwo}
    H_1^r (\ell)= \frac{1}{r} \int_{[0,\ell]^2} \left(1-e^{-\frac{\ell-u}{r}} \right) (u+y) \bfs^r(u) \bfi_1^r(u,y)  \mathds{1}_{u+y \leq \ell} \ \mathrm{d}u \ \mathrm{d}y
\end{align}
and
\begin{align}
\label{eq:definitionKtwo}
    K_1^r (\ell)= \frac{1}{r} \int_{[0,\ell]^2} (u+y) \bfs^r(u) \bfi_1^r(u,y)  \mathds{1}_{u+y \leq \ell} \ \mathrm{d}u \ \mathrm{d}y,
\end{align}
with~$\bfi_1^r$ the joint density of~$\left(\bfI_1^r,\bfR_1^r\right)$.
Finally, for~$\frb=0$:
\begin{align}
\label{eq:definitionHthree}
    H_0^r (\ell)= \frac{1}{r} \int_{[0,\ell]^3} (v+y) \left(1-e^{-\frac{\ell-u}{r}} \right) \left(1-e^{-\frac{u}{r}} \right) \bfs_1^r(u,v) \bfi_2^r(u,y) \mathds{1}_{v+y \leq \ell}  \ \mathrm{d}u \ \mathrm{d}v \ \mathrm{d}y
\end{align}
and
\begin{align}
\label{eq:definitionKthree}
    K_0^r (\ell)= \frac{1}{r} \int_{[0,\ell]^3} (v+y)\bfs_1^r(u,v) \bfi_2^r(u,y) \mathds{1}_{v+y \leq \ell}  \ \mathrm{d}u \ \mathrm{d}v \ \mathrm{d}y,
\end{align}
where~$\bfs_1^r$ designates the joint density of~$\left(\bfS_1^r,\bfR_1^r\right)$ and~$\bfi_2^r$ that of~$\left(\bfI_2^r,\bfR_2^r\right)$.
The maps involved in Proposition~\ref{prop:comparisonLambdaR} are now explicit. We have to establish their continuity, their boundedness and the inequality~\eqref{eq:inequalityfunctionGH} in every case. A common strategy is employed. Continuity either results from a direct computation of the integrals, or from a standard application of the Lebesgue's dominated convergence theorem. A separate asymptotic analysis shows afterwards that all the functions vanish as~$\ell \to 0$ and as~$\ell \to +\infty$, so they are bounded. We also prove at once that~$G_\frb^r/H_\frb^r$ converges in both regimes, locally uniformly in~$r$. It implies that~$\sup_{\ell >0} \frac{G_\frb^r(\ell)}{H_\frb^r(\ell)}$ exists and is continuous w.r.t.~$r$. It can therefore play the role of the map~$C_\frb$ in~\eqref{eq:inequalityfunctionGH}. All of this is then enough to conclude.    \\

\textit{The case~$\frb=2$.}
Here the involved points~$i$ and~$j$ are the extremities of the segment~$[0,\ell]$. Recall that~$\bfS_0^r=\bfR_0^r$ and~$\bfI_{N+1}^r=\ell-\bfR_{N+1}^r$, where~$\bfR_0^r$ and~$\bfR_{N+1}^r$ are exponentially-distributed random variables with mean~$r$. Their density is~$x \ \mapsto \frac{1}{r} e^{-\frac{x}{r}}$. We can compute~\eqref{eq:defGzero}:
\begin{align}
    G_2^r(\ell) = r e^{-\frac{\ell}{r}} \int_{[0,\ell]^2} e^{\frac{x+y}{r}}  \mathds{1}_{x+y < \ell} \ \frac{1}{r} e^{-\frac{x}{r}} \frac{1}{r} e^{-\frac{y}{r}} \ \mathrm{d}x \ \mathrm{d}y = \frac{\ell^2}{2r} e^{-\frac{\ell}{r}}, \nonumber
\end{align}
and~\eqref{eq:definitionHKzero}:
\begin{align}
    H_2^r (\ell)=K_2^r(\ell) = \frac{\ell}{r} \int_0^\ell \frac{1}{r} e^{-\frac{u}{r}} \frac{1}{r} e^{-\frac{\ell-u}{r}} \ \mathrm{d}u = \frac{\ell^2}{r^3} e^{-\frac{\ell}{r}}, \nonumber
\end{align}
by using that~$\bfi^r (\cdot)=\bfs^r(\ell-\cdot)$. The three maps are thus bounded and continuous, as is~$\sup_{\ell >0} \frac{G_2^r(\ell)}{H_2^r(\ell)}$, equal to~$\frac{r^2}{2}$, which then suits to be~$C_2$. We got everything that we expected. \\

\textit{The case~$\frb=0$.}
 In this paragraph, both points~$i$ and~$j$ are internal. Without loss of generality, we assume that~$i=1$ and~$j=2$. Since~$\bfS_1^r=\bfU_1 + \frac{r}{2} \bdcalE_1$ with~$\bfU_1 \sim \text{Unif}[0,\ell]$ and~$\bdcalE_1 \sim \text{Exp(1)}$ independent of each other, the associated density, restricted to the segment~$[0,\ell]$, is~$x \mapsto \frac{1}{\ell}(1-e^{-\frac{2x}{r}})$. So is the density of~$\bfS_2^r$. It allows us to develop~\eqref{eq:defGonetwo}:
 \begin{align}
 \label{eq:defGbissimplified}
     &G_0^r (\ell) \nonumber \\ & = \frac{r}{\ell^2} e^{-\frac{\ell}{r}}  \left(1-e^{-\frac{\ell}{r}} \right)^{2}\int_{0 \leq x+y < \ell} e^{\frac{x+y}{r}} (1-e^{-
     \frac{2x}{r}}) (1-e^{-\frac{2y}{r}}) \mathrm{d}x \ \mathrm{d}y \nonumber \\
     &= \frac{r}{\ell^2} e^{-\frac{\ell}{r}}  \left(1-e^{-\frac{\ell}{r}} \right)^{2} \left( \int_{0 \leq x +y \leq \ell} e^{\frac{x+y}{r}} \mathrm{d}x\mathrm{d}y - 2 \int_{0 \leq x +y \leq \ell} e^{\frac{y-x}{r}} \mathrm{d}x \mathrm{d}y + \int_{0 \leq x +y \leq \ell} e^{-\frac{x+y}{r}} \mathrm{d}x  \mathrm{d}y \right) \nonumber \\
    &= \frac{r}{\ell^2} e^{-\frac{\ell}{r}}  \left(1-e^{-\frac{\ell}{r}} \right)^{2} \left( r \ell e^{\frac{\ell}{r}} - r^2 e^{\frac{\ell}{r}} + r^2 -r^2 e^{\frac{\ell}{r}} +2 r^2 -r^2 e^{\frac{-\ell}{r}} +r^2 -r \ell e^{-\frac{\ell}{r}} - r^2 e^{-\frac{\ell}{r}} \right) \nonumber \\
    &= \frac{r}{\ell^2} e^{-\frac{\ell}{r}}  \left(1-e^{-\frac{\ell}{r}} \right)^{2}  \left( 4 r^2 - 4 r^2 \cosh{\left(\frac{\ell}{r}\right)} + 2 r \ell \sinh{\left(\frac{\ell}{r}\right)}  \right).
 \end{align}
The latter expression is clearly continuous w.r.t~$\ell$. In the vicinity of~$0$, we check that 
$$4 r^2 - 4 r^2 \cosh{\left(\frac{\ell}{r}\right)} + 2 r \ell \sinh{\left(\frac{\ell}{r}\right)} \sim \frac{\ell^4}{6 r^2},$$ 
so that:
\begin{align}
\label{eq:asymptoticGtwovicinityzero}
    G_0^r (\ell) \underset{\ell \to 0}{\sim} \frac{\ell^4}{6 r^3}.
\end{align}
At the same time, as~$\ell \to +\infty$:
\begin{align}
\label{eq:asymptoticGtwovicinityinfinity}
    G_0^r (\ell) \sim \frac{r}{\ell^2} e^{-\frac{\ell}{r}} \times 2r \ell \times \sinh{\left(\frac{\ell}{r}\right)} \sim 2 r^2 \ell^{-1}. 
\end{align}
We deal now with~\eqref{eq:definitionHthree}. The joint density~$\bfs_1^r$ of~$\left(\bfS_1^r,\bfR_1^r\right)$ is
$$(u,v) \mapsto \frac{2}{r \ell }e^{-\frac{2v}{r}}\mathds{1}_{u \in [v,v+\ell]} \mathds{1}_{v\geq 0},$$ 
while that of~$\left(\bfI_2^r,\bfR_2^r\right)$ is
$$\bfi_2^r : \ (u,v) \mapsto \frac{2}{r \ell }e^{-\frac{2y}{r}}\mathds{1}_{u \in [-y,-y+\ell]} \mathds{1}_{y\geq 0}.$$
We inject into~\eqref{eq:definitionHthree}:
\begin{align}
    H_0^r (\ell) = \frac{4}{r^3 \ell^2} \int_{\bbR_+^3} (v+y) e^{-\frac{2\left(v+y\right)}{r}} \left(1-e^{-\frac{u}{r}}\right) \left(1-e^{-\frac{\ell-u}{r}}\right) \mathds{1}_{u \in [v,\ell-y]} \ \mathds{1}_{v+y \leq \ell} \ \mathrm{d}u \ \mathrm{d}v \ \mathrm{d}y. \nonumber
\end{align}
The integrand above is a piecewise continuous function on~$\bbR_+^4$. For any fixed~$\ell > 0$, its restriction to~$\left\{\ell\right\}\times\bbR_+^3$ has a compact support. The continuity of~$H_0^r$ is then a straightforward consequence of the Lebesgue's dominated convergence theorem. We clarify the expression of~$H_0^r$ thanks to the substitution~$z=v+y$, after first integrating w.r.t~$u$:
\begin{align}
\label{eq:defHbissimplified}
H_0^r(\ell) = \frac{4 \left(1+e^{-\frac{\ell}{r}}\right)}{r^3 \ell^2} \int_{0}^\ell z^2 (\ell-z) e^{-\frac{2z}{r}} \ \mathrm{d}z - \frac{8}{r \ell^2} \int_{0}^\ell z e^{-\frac{2z}{r}} (1-e^{-\frac{\ell-z}{r}}) (1-e^{-\frac{z}{r}})\ \mathrm{d}z.
\end{align}
In the small length regime~$\ell \to 0$:
\begin{align}
     \int_{0}^\ell z^2 (\ell-z) e^{-\frac{2z}{r}} \ \mathrm{d}z &= \int_0^\ell z^2 \left(1-\frac{2z}{r} + \frac{2z^2}{r^2}\right) \left(\ell-z\right) \ \mathrm{d}z + o\left(\ell^6 \right) \nonumber \\
     &=\ell \int_{0}^\ell \left[z^2-\frac{2 z^3}{r} + \frac{2z^4}{r^2} + o\left(z^4\right)\right]  \mathrm{d}z  - \int_{0}^\ell \left[z^3 -\frac{2 z^4}{r}+\frac{2 z^5}{r^2}+o\left(z^5\right)\right]   \mathrm{d}z \nonumber \\ 
     &= \frac{\ell^4}{3} - \frac{\ell^5}{2r}+ \frac{2\ell^6}{5 r^2} -\frac{\ell^4}{4} + \frac{2 \ell^5}{5r} - \frac{\ell^6}{3r^2}+o\left(\ell^6\right) \nonumber \\ 
     &= \frac{\ell^4}{12} - \frac{\ell^5}{10 r}  + \frac{\ell^6}{15 r^2} + o\left(\ell^6\right). \nonumber
\end{align}
and:
\begin{align}
    &\int_{0}^\ell z e^{-\frac{2z}{r}} (1-e^{-\frac{\ell-z}{r}}) (1-e^{-\frac{z}{r}})\ \mathrm{d}z  \nonumber \\
    &= \left(1+e^{-\frac{\ell}{r}} \right) \int_{0}^\ell z e^{-\frac{2z}{r}} \ \mathrm{d}z  - \int_{0}^\ell z e^{-\frac{3z}{r}} \ \mathrm{d}z - e^{-\frac{\ell}{r}} \int_{0}^\ell z e^{-\frac{z}{r}} \ \mathrm{d}z  \nonumber \\
    &= \left(1+e^{-\frac{\ell}{r}} \right) \left(\int_{0}^\ell z \left(e^{-\frac{2z}{r}}-e^{-\frac{z}{r}}\right) \ \mathrm{d}z \right) + \int_{0}^\ell z \left( e^{-\frac{z}{r}}-e^{-\frac{3z}{r}}\right) \ \mathrm{d}z  \nonumber \\
    &= \left(1+e^{-\frac{\ell}{r}} \right) \left(-\frac{\ell^3}{3r}+\frac{3\ell^4}{8 r^2} -\frac{7 \ell^5}{30 r^3} + \frac{5\ell^6}{48 r^4} + o\left(\ell^6 \right) \right)+\left(\frac{2\ell^3}{3r}-\frac{\ell^4}{r^2} +\frac{13 \ell^5}{15 r^3} - \frac{5\ell^6}{9 r^4} + o\left(\ell^6 \right)\right). \nonumber
\end{align}
Consequently:
\begin{align}
\label{eq:asymptoticHtwoinvicinityzero}
   &H_0^r (\ell) \nonumber \\
    &= \frac{1+e^{-\frac{\ell}{r}}}{r^3 \ell^2}\left(\frac{8r}{3}\ell^3 -\frac{8}{3} \ell^4 + \frac{22}{15 r} \ell^5 - \frac{17}{30 r^2} \ell^6\right) - \frac{8}{r \ell^2} \left(\frac{2\ell^3}{3r}-\frac{\ell^4}{r^2} +\frac{13 \ell^5}{15 r^3} - \frac{5\ell^6}{9 r^4} \right) + o\left(\ell^6\right) \nonumber \\ 
    &= \frac{1}{r^3 \ell^2} \left( \frac{16r}{3}\ell^3 - 8 \ell^4 + \frac{104}{15r} \ell^5 - \frac{197}{45 r^2} \ell^6 \right) - \left( \frac{16}{3 r^2} \ell - \frac{8}{r^3} \ell^2 + \frac{104}{15 r^4}\ell^3 - \frac{40}{9 r^5} \ell^4 \right) + o\left(\ell^6\right) \nonumber \\
    & \underset{\ell \to 0}{\sim} \frac{\ell^4}{15 r^5}.
\end{align}
In the large length regime~$\ell \to +\infty$, we have
$$\int_{0}^\ell z^2 (\ell-z) e^{-\frac{2z}{r}} \ \mathrm{d}z \sim \ell \int_0^{+\infty} z^2 e^{-\frac{2z}{r}} \ \mathrm{d}z,$$
and
$$\lim_{\ell \to +\infty} \int_{0}^\ell z e^{-\frac{2z}{r}} (1-e^{-\frac{\ell-z}{r}}) (1-e^{-\frac{z}{r}})\ \mathrm{d}z = \int_{0}^{+\infty} z e^{-\frac{2z}{r}} (1-e^{-\frac{z}{r}})\ \mathrm{d}z <+\infty, $$
thanks to the Lebesgue's dominated convergence theorem. We deduce from~\eqref{eq:defHbissimplified} that:
\begin{align}
    \label{eq:asymptoticHtwovicinityinfinity}
    H_0^r (\ell) \underset{\ell \to +\infty}{\sim} \left( \frac{4}{r^3} \int_0^{+\infty} z^2 e^{-\frac{2z}{r}} \ \mathrm{d}z \right) \times \ell^{-1} = \ell^{-1}.
\end{align}
The asymptotics~\eqref{eq:asymptoticGtwovicinityzero}, \eqref{eq:asymptoticGtwovicinityinfinity}, \eqref{eq:asymptoticHtwoinvicinityzero} and~\eqref{eq:asymptoticHtwovicinityinfinity} show that~$G_0^r$ and~$H_0^r$ vanish and are of exact same order as~$\ell \to 0$, as well as~$\ell \to +\infty$, so that~$G_0^r/H_0^r$ converges to some limit in both regimes. The maps being continuous, they are bounded and~$\sup_{\ell >0} \frac{G_0^r(\ell)}{H_0^r(\ell)} <+\infty$. Set~$C_0(r):=\sup_{\ell >0} \frac{G_0^r(\ell)}{H_0^r(\ell)}$. Continuity w.r.t.~$r$ of~$C_0$ results from several facts put together. The first is that~$G_0/H_0$ is continuous on~$(0,+\infty)^2$ as a function of~$r$ and~$\ell$, and continuously extendable at~$(0,+\infty)\times[0,+\infty)$. Indeed, given~\eqref{eq:asymptoticGtwovicinityzero} and~\eqref{eq:asymptoticHtwoinvicinityzero}, the limit at~$0$ of~$G_0^r(\ell)/H_0^r(\ell)$ is~$5r^2/2$ for any~$r>0$, that is a continuous function w.r.t~$r$ on~$(0,+\infty)$. Besides, by investigating even more thoroughly the behaviour of~$G_0^r$ and~$H_0^r$ in the vicinity of zero, we could demonstrate that the latter convergence is locally uniform in~$r$, meaning that on any segment~$I \subset (0,+\infty)$:
\begin{align}
\nonumber
    \sup_{r \in I} \left| \frac{G_0^r(\ell)}{H_0^r(\ell)}-\frac{5r^2}{2}\right| < c \ell 
\end{align}
for some constant~$c=c_I>0$ and~$\ell$ small enough. This ensures that the map is continuously extendable as claimed above. We derive from it that~$\frac{G_0^r(\ell)}{H_0^r(\ell)}$ is locally uniformly continuous on~$(0,+\infty)\times[0,+\infty)$. Therefore, for any~$r>0$ and any~$L>0$:
\begin{align}
\label{eq:uniformcontinuity1}
    \sup_{0 \leq \ell \leq L} \ \sup_{r' \in (r-\delta,r+\delta)} \left|\frac{G_0^{r'}(\ell)}{H_0^{r'}(\ell)}-\frac{G_0^{r}(\ell)}{H_0^{r}(\ell)}\right| \xrightarrow[\delta \to 0]{} 0.
\end{align}
The second fact to mention is an equivalent of the former at infinity. Given~\eqref{eq:asymptoticGtwovicinityinfinity} and~\eqref{eq:asymptoticHtwovicinityinfinity}, the limit of~$G_0^r(\ell)/H_0^r(\ell)$ as~$\ell \to +\infty$ is~$2 r^2$ for any~$r>0$, that is here again a continuous function w.r.t~$r$ on~$(0,+\infty)$. Some basic analysis manipulations on the expressions~\eqref{eq:defGbissimplified} and~\eqref{eq:defHbissimplified} show that the rate of convergence is locally uniform in~$r$, in the sense that on any segment~$I \subset (0,+\infty)$:
\begin{align}
\nonumber
    \sup_{r \in I} \left| \frac{G_0^r(\ell)}{H_0^r(\ell)}-2r^2\right| < c \ell^{-1}. 
\end{align}
for some constant~$c=c_I>0$ and~$\ell$ large enough. Thus:
\begin{align}
   \limsup_{\delta \to 0} \ \sup_{\ell > L} \ \sup_{r' \in (r-\delta,r+\delta)} \left|\frac{G_0^{r'}(\ell)}{H_0^{r'}(\ell)}-\frac{G_0^{r}(\ell)}{H_0^{r}(\ell)}\right| \leq c L^{-1}, \nonumber
\end{align}
for some~$c>0$ and any~$L>0$ large enough. Combined with~\eqref{eq:uniformcontinuity1}, it entails that:
\begin{align}
\sup_{\ell>0} \sup_{r' \in (r-\delta,r+\delta)} \left|\frac{G_0^{r'}(\ell)}{H_0^{r'}(\ell)}-\frac{G_0^{r}(\ell)}{H_0^{r}(\ell)}\right| \xrightarrow[\delta \to 0]{} 0. \nonumber
\end{align}
The continuity of~$C_0$ directly follows.
The final word in this paragraph is about the map~$K_0^r$. We remark that its definition~\eqref{eq:definitionKthree} is very close to that of~$H_0^r$~\eqref{eq:definitionHthree}. On the model of what we did for it, we prove continuity of~$K_0^r$ from the Lebesgue's dominated convergence theorem. We compute afterwards that~$K_0^r (\ell) = \frac{4}{r^3 \ell^2} \int_0^\ell z^2 \left( \ell-z\right) e^{-\frac{2z}{r}} \ \mathrm{d}z$, which implies that~$K_0^r(\ell) =\calO\left(\ell^2\right)$ as~$\ell \to 0$ and~$K_0^r(\ell) \sim \ell^{-1}$ as~$\ell \to +\infty$. The function is hence bounded.  \\

\textit{The case~$\frb=1$.} We finally deal with the mixed situation where~$i$ is a boundary point and~$j$ an internal one, say for instance~$i=0$ and~$j=1$. Recall that the density~$\bfs^r$ of~$\bfS_0^r$ is~$x \mapsto \frac{1}{r} e^{-\frac{x}{r}}$. That of~$\bfS_1^r$ is~$x \mapsto \frac{1}{\ell}\left(1-e^{-\frac{2x}{r}}\right)$. We derive an explicit expression of~$G_1^r$ from~\eqref{eq:defGonetwo}:
\begin{align}
    G_1^r(\ell) &= \ell^{-1} e^{-\frac{\ell}{r}}  \left(1-e^{-\frac{\ell}{r}} \right) \int_{0 \leq x+y < \ell} e^{\frac{x+y}{r}} e^{-\frac{x}{r}} (1-e^{-
     \frac{2y}{r}}) \ \mathrm{d}x \ \mathrm{d}y \nonumber \\
     &= 2 \ell^{-1} e^{-\frac{\ell}{r}}  \left(1-e^{-\frac{\ell}{r}} \right) \left(\int_{0}^\ell \left(\ell-y \right)\sinh{\left(\frac{y}{r} \right)} \ \mathrm{d}x \ \mathrm{d}y \right) \nonumber \\
     &= 2 \ell^{-1} e^{-\frac{\ell}{r}}  \left(1-e^{-\frac{\ell}{r}} \right) \left( r^2 \sinh{\left(\frac{\ell}{r}\right)}-r \ell  \right). \nonumber
\end{align}
The map is unequivocally continuous. Since~$r^2 \sinh{\left(\frac{\ell}{r}\right)}-r \ell \sim \frac{\ell^3}{6r}$ in the vicinity of~$0$, we get 
\begin{align}
\label{eq:asymptoticGonevicinityzero}
    G_1^r (\ell) \underset{\ell \to 0}{\sim} \frac{\ell^3}{3 r^2}. 
\end{align}
As~$\ell \to +\infty$, it holds this time that:
\begin{align}
\label{eq:asymptoticGonevicinityinfinity}
    G_1^r (\ell) \sim r^2 \ell^{-1}.
\end{align}
We turn now our attention to~$H_1^r$, defined by~\eqref{eq:definitionHtwo}. The joint density~$\bfi_1^r$ of~$\left(\bfI_1^r,\bfR_1^r\right)$ is 
$$(u,y) \mapsto \frac{2}{r \ell }e^{-\frac{2y}{r}}\mathds{1}_{u \in [-y,-y+\ell]} \mathds{1}_{y\geq 0},$$
so that:
\begin{align}
    H_1^r(\ell) = \frac{2}{r^3 \ell} \int_{\bbR_+^2} \left(1-e^{-\frac{\ell-u}{r}} \right) (u+y) e^{-\frac{u}{r}} e^{-\frac{2y}{r}} \mathds{1}_{u+y \leq \ell} \ \mathrm{d}u \ \mathrm{d}y. \nonumber 
\end{align}
Continuity of~$H_1^r$ again results from a simple application of the Lebesgue's dominated convergence theorem, whose details are left to the reader. The substitution~$z=u+y$ allows us to rewrite the above integral:
\begin{align}
\label{eq:defHbisonesimplified}
    H_1^r(\ell) = \frac{2}{r^3 \ell} \int_{0}^\ell z e^{-\frac{2z}{r}} \left( \int_0^z e^{\frac{u}{r}} \left(1-e^{-\frac{\ell-u}{r}}\right) \ \mathrm{d}u \right) \ \mathrm{d}z .
\end{align}
Given~$e^{\frac{u}{r}} \left(1-e^{-\frac{\ell-u}{r}} \right) = 1-e^{-\frac{\ell}{r}} + o\left(1 \right)$ around~$0$, we deduce that
\begin{align}
\label{eq:asymptoticHoneinvicinityzero}
    H_1^r (\ell) 
    &= \frac{2}{r^3 \ell} \int_{0}^\ell z e^{-\frac{2z}{r}} \left( \int_0^z \left[ 1-e^{-\frac{\ell}{r}} + o\left(1 \right)\right] \ \mathrm{d}u \right) \ \mathrm{d}z \nonumber \\
    &= \frac{2 \left( 1-e^{-\frac{\ell}{r}} \right)}{r^3 \ell} \int_{0}^\ell \left[ z^2 + o\left( z^2 \right)\right] \ \mathrm{d}z \underset{\ell \to 0}{\sim} \frac{2 \ell^3}{3 r^4},
\end{align}
In the large length regime, according to the Lebesgue's dominated convergence theorem:
\begin{align}
    &\lim_{\ell \to +\infty}  \int_{0}^\ell z e^{-\frac{2z}{r}} \left( \int_0^z e^{\frac{u}{r}} \left(1-e^{-\frac{\ell-u}{r}}\right) \ \mathrm{d}u \right) \ \mathrm{d}z \nonumber \\
    &= \int_{0}^{+\infty} z e^{-\frac{2z}{r}} \left( \int_0^z e^{\frac{u}{r}} \ \mathrm{d}u \right) \ \mathrm{d}z = r \int_{0}^{+\infty} z \left( e^{-\frac{z}{r}}-e^{-\frac{2z}{r}} \right)  \ \mathrm{d}z = \frac{3 r^3}{4}. \nonumber
\end{align}
Thus:
\begin{align}
\label{eq:asymptoticHonevicinityinfinity}
    H_1^r(\ell) \underset{\ell \to +\infty}{\sim} \frac{3}{2 \ell}.
\end{align}
We conclude exactly like in the previous case.
Given~\eqref{eq:asymptoticGonevicinityzero}, \eqref{eq:asymptoticGonevicinityinfinity}, \eqref{eq:asymptoticHoneinvicinityzero} and~\eqref{eq:asymptoticHonevicinityinfinity}, the maps~$G_1^r$ and~$H_1^r$ are asymptotically equivalent in the vicinity of zero and at infinity, so that~$G_1^r/H_1^r$ converges in both regimes. Since the maps are continuous, it implies that~$\sup_{\ell >0} \frac{G_1^r(\ell)}{H_1^r(\ell)} <+\infty$. We then set~$C_1:=\sup_{\ell >0} \frac{G_1^r(\ell)}{H_1^r(\ell)}$ and prove that~$C_1$ is continuous w.r.t.~$r$ on~$(0,+\infty)$ in the same way we did for~$C_0$. Lastly, we compute that $$K_1^r(\ell)=\frac{2}{r^2 \ell}  \int_{0}^{\ell} z \left( e^{-\frac{z}{r}}-e^{-\frac{2z}{r}} \right) \mathrm{d}z.$$ The expression is continuous w.r.t~$\ell$. It tends to zero as~$\ell \to 0$, as well as~$\ell \to +\infty$. Hence, the function is bounded. The proof of Proposition~\ref{prop:comparisonLambdaR} is now complete.

\section{Uniqueness of the infinite cluster}
\label{sect:uniqueness}

The purpose of this section is to show the uniqueness of the infinite cluster as stated in Theorem \ref{thm:uniqueness}. In this whole section, we assume that $p>0$ and $r<\infty$. If $p=0$, there is no percolation anyway, and for $r=\infty$ the models boils down to the  well-known Poisson-Vorono\"i percolation model.

We start with a geometric lemma on the trace of the Delaunay triangulation in any Euclidean ball. For a locally finite set of points $\gamma$ of $\mathbb{R}^{2}$ with associated Delaunay triangulation $\bfT_{\gamma}$ and a convex subset $\mathcal{C}$ of $\mathbb{R}^{2}$, we define the trace $\text{Tr}_{\mathcal{C}}(\mathbf{T}_\gamma)$ of $\bfT_{\gamma}$ in $\mathcal{C}$ as the subset of $\bfT$ made of the segments $[x,y]$ of $\mathbf{T}_\gamma$ such that $x\in \gamma\cap \mathcal{C}$ or $y\in \gamma\cap \mathcal{C}$, so
$$
\text{Tr}_{\mathcal{C}}(\mathbf{T}_\gamma)=\bigcup_{\underset{\scriptstyle y\in \gamma}{x\in \gamma\cap\mathcal{C}}}[x,y]\cap \mathbf{T}_\gamma.
$$
In other words, this is the subgraph made of the point in $\mathcal{C}$ in addition to the outer edges (see Figure \ref{fig:trace}).

We can now state our main tool, whose proof is postponed to the end of the section.
\begin{lem}
    \label{prop:trace}
	Let $\gamma$ be a locally finite subset of $\mathbb{R}^{2}$.
	Then, the trace $\text{Tr}_{\mathcal{C}}(\mathbf{T}_\gamma)$ of $\bfT_{\gamma}$  on any Euclidean ball $\mathcal{C}$ is connected.
\end{lem}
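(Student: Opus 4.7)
Fix $x, y \in \gamma \cap \mathcal{C}$; I will exhibit a path from $x$ to $y$ lying in $\text{Tr}_{\mathcal{C}}(\mathbf{T}_\gamma)$. Every outer vertex of the trace (a point of $\gamma \setminus \mathcal{C}$ incident to a trace edge) is, by construction, joined to a vertex of $\gamma \cap \mathcal{C}$ by a single edge of the trace, so it is enough to connect any two vertices of $\gamma \cap \mathcal{C}$ inside the trace.

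The natural approach is to follow the straight segment $[x,y]$, which lies in $\mathcal{C}$ by convexity, and to consider the sequence $T_1, \ldots, T_m$ of Delaunay triangles it crosses, with $x$ a vertex of $T_1$, $y$ a vertex of $T_m$, and consecutive $T_i, T_{i+1}$ sharing a Delaunay edge (the case $\{x,y\}\in \bfT_\gamma$ being trivial). The heart of the argument is the following geometric claim: \emph{every Delaunay triangle that meets $[x,y]$ has at least one vertex in $\mathcal{C}$}.

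I plan to prove this claim by contradiction. Assume a triangle $T$ meeting $[x,y]$ has vertices $u,v,w \in \gamma \setminus \mathcal{C}$, and let $C_T$ denote its closed circumscribed disk, whose interior is empty of $\gamma$-points by the Delaunay property. Since $x,y \in \gamma$ we have $x,y \notin \mathrm{int}(C_T)$; yet $[x,y] \subset \mathcal{C}$ crosses the triangle $T \subset C_T$, so the segment enters and leaves $C_T$ through two points $a,b \in \partial C_T$ that lie in $\mathcal{C}$ because $[x,y]$ does. The two circles $\partial C_T$ and $\partial \mathcal{C}$ meet in at most two points; this cuts $\partial C_T$ into an inside arc (the part lying in $\mathcal{C}$) carrying $a,b$ and an outside arc carrying $u,v,w$. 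Since $\{a,b\}$ is not interleaved with any of $\{u,v\}$, $\{v,w\}$, $\{u,w\}$ along $\partial C_T$, the chord $[a,b]$ meets no side of $T$, whence $[a,b] \cap T = \emptyset$. This contradicts $[x,y] \cap T \subseteq [a,b] \cap T \neq \emptyset$. The degenerate possibilities where $\partial C_T$ and $\partial \mathcal{C}$ fail to intersect transversally are each dispatched in one line: the inclusion $\mathcal{C} \subset \mathrm{int}(C_T)$ would force $\gamma \cap \mathcal{C} = \emptyset$ and contradict $x \in \gamma \cap \mathcal{C}$, while the remaining cases contradict either $u,v,w \notin \mathcal{C}$ or $T \cap \mathcal{C} \neq \emptyset$.

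Granted the claim, the path from $x$ to $y$ is assembled by two-step hops. For each $i$, choose a vertex $v_i \in T_i \cap \mathcal{C}$, with $v_1 := x$ and $v_m := y$, and let $\{a_i,b_i\}$ denote the Delaunay edge shared by $T_i$ and $T_{i+1}$. As $v_i$ and $a_i$ are both vertices of $T_i$ (hence Delaunay-adjacent), and $a_i$ and $v_{i+1}$ are both vertices of $T_{i+1}$, the hop $v_i \to a_i \to v_{i+1}$ uses two Delaunay edges, each incident to a vertex ($v_i$ or $v_{i+1}$) of $\gamma \cap \mathcal{C}$, and hence each in $\text{Tr}_{\mathcal{C}}(\mathbf{T}_\gamma)$. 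Concatenating over $i$ yields the desired path. The main difficulty, as emphasized above, is the chord-separation step in the key claim; nongeneric configurations, where $[x,y]$ passes through intermediate Delaunay vertices, are harmless since such vertices automatically lie in $\mathcal{C}$ and simply split the walk into smaller pieces.
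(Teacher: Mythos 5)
Your proof is correct, and it takes a genuinely different route from the paper's. The paper sets up a chord-separation lemma (their Lemma on any closed ball through two points $A,B$ cutting $\mathcal{C}$ into halves must contain one of two interior $\gamma$-points) and then runs an induction on $\#(\gamma\cap\mathcal{C})$ driven by the Bowyer--Watson incremental-insertion algorithm: each newly inserted point carves out a polygonal hole whose boundary, by that lemma, must contain a vertex already in $\mathcal{C}$, and a case analysis shows the re-triangulation cannot disconnect the trace. You instead walk directly along the segment $[x,y]$ through the finite sequence of Delaunay triangles it meets, and the single geometric insight you need is that every such triangle has a vertex in $\mathcal{C}$. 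Your proof of that claim---that if $u,v,w\notin\mathcal{C}$ then the two intersection points $a,b$ of $[x,y]$ with $\partial C_T$ lie on the inside arc while $u,v,w$ lie on the outside arc, so $\{a,b\}$ cannot interleave with any side of $T$ and the chord $[a,b]$ (hence $[x,y]\cap C_T$) misses $T$---is a clean application of the same empty-circumdisc property that underlies the paper's lemma, but packaged at the level of triangles rather than edges, and it removes the need for both the algorithmic machinery and the induction. The path assembly via two-hop moves $v_i\to a_i\to v_{i+1}$ through shared edges is sound, since each such edge has an endpoint in $\gamma\cap\mathcal{C}$ and hence lies in the trace. The payoff of your route is a self-contained, direct and arguably shorter argument; the payoff of the paper's route is that the induction isolates the role of the circumdisc property in the Bowyer--Watson re-triangulation step, which the authors evidently found a useful framing. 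One small point worth making explicit in a write-up: the degenerate sub-case where $x$ or $y$ lies on $\partial C_T$ forces $x$ or $y$ to be a vertex of $T$, so the claim is then immediate; and the finiteness of the triangle sequence uses local finiteness of $\gamma$ and the fact that $[x,y]\subset\mathrm{conv}(\gamma)$.
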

Note that, as shown in Figure \ref{fig:trace}, this does not hold for any convex set.

\begin{figure}
\captionsetup[subfigure]{labelformat=empty}
	\centering
	\subfloat[\centering A Delaunay triangulation]{\includegraphics[scale=0.45,trim=120 20 140 10,clip]{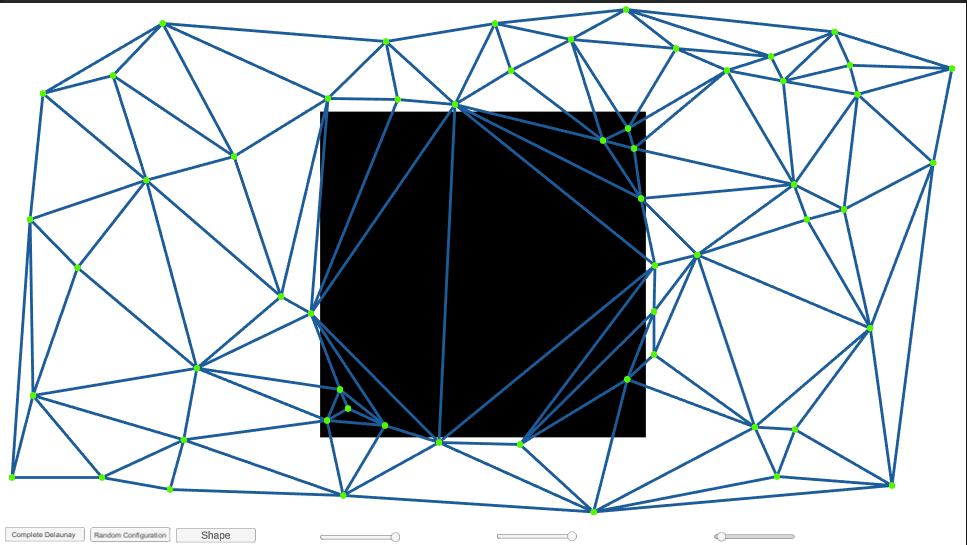}}
	\quad
	\subfloat[\centering The subgraph of the points in the square]{\includegraphics[scale=0.45,trim=150 30 150 0,clip]{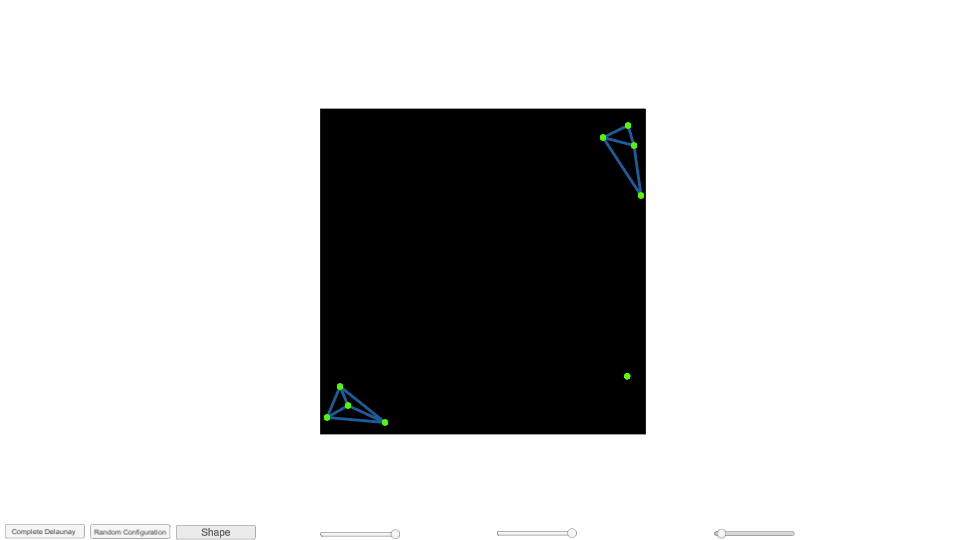}}
	\quad
	\subfloat[\centering The Trace in the square]{\includegraphics[scale=0.45,trim=120 20 120 0,clip]{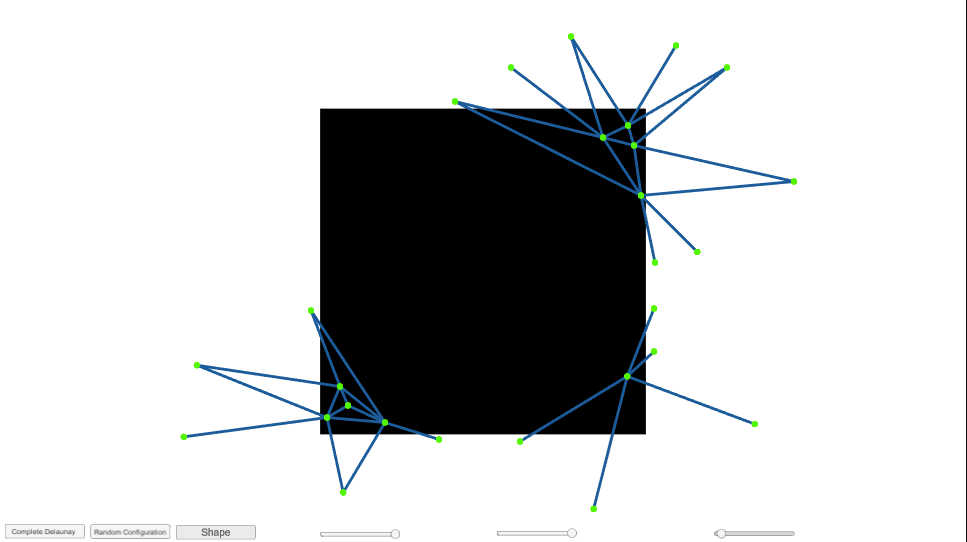}}
	\caption{Trace graph can be not connected on square }
	\label{fig:trace}
\end{figure}

\newcommand{\B}{{\mathfrak{B}_{\alpha}}}
\newcommand{\Bd}{\mathfrak{B}_{2\alpha}}

\newcommand{\trace}{\text{Tr}_{\B}(\mathbf{T})}
We can now start the proof of the uniqueness of the infinite connected component. 
Let $\ncc$ denotes the number of infinite connected components in $\bdcalG_{p,\lambda,r}$. Let us also denotes by $\B$ the Euclidean ball centered at $0$ with radius $\alpha$. Now, $\ncc_{\B}$ denotes the number of infinite connected components intersecting $\text{Tr}_{\B}(\mathbf{T})$ which are disjoint in the closure of $\bdcalG_{p,\lambda,r}\setminus \trace$. This in particular entails that we ask these components to penetrate $\trace$ though different vertices.  Note that infinite connected components intersecting $\B$ but without vertices in $\trace$ are not taken into account here.

For a ball $\B$ we also denote $\textrm{Full}(\B)$ the event that the trace of the Delaunay triangulation $\bfT$ in $\B$ is fully connected (all vertices are open, and all edges are completely covered).  Because edges can be covered by the exponential random variables associated to their endpoints, we have

\begin{lem}
\label{lem:fulltrace}
Almost surely
$$
    \mathbb{P}(\textrm{Full}(\B)\vert\ \bfX,\ \bfT)>0.$$
\end{lem}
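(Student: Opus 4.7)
The plan is to reduce, conditionally on $\bfX$ and $\bfT$, the event $\textrm{Full}(\B)$ to a finite intersection of independent, positive-probability events. First I would observe that $\bfX \cap \B$ is almost surely finite, and that each of its points has finite degree in $\bfT$, so that the trace $\trace$ has a finite vertex set $V$ (the points of $\bfX \cap \B$ together with the outer endpoints of the outgoing Delaunay edges) and a finite edge set $E$. The event $\textrm{Full}(\B)$ then reads as the intersection of the event $A = \{V \subset \bfX_p\}$ that every vertex of the trace is open, with the events $B_e$, for $e \in E$, that the segment $e$ is entirely contained in $\bdcalG_{p,\lambda,r}$.

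Next I would exploit the product structure of the marked PPP $\bar{\bfX}$: conditionally on $\bfX$, the Bernoulli marks $(V_x)_{x \in \bfX}$ and the exponential marks $(\mathcal{E}_{x,k})_{x \in \bfX, k \geq 1}$ are mutually independent. Hence $\mathbb{P}(A \mid \bfX, \bfT) = p^{|V|} > 0$ almost surely, since $p > 0$ and $|V|$ is finite. For an edge $e = \{x,x'\} \in E$ of length $\ell_e < \infty$, a sufficient condition for $B_e$, given $A$, is that the two exponential marks attached to $e$ along its endpoints, namely $\mathcal{E}_{x,i(e)}$ and $\mathcal{E}_{x',j(e)}$, satisfy $r(\mathcal{E}_{x,i(e)} + \mathcal{E}_{x',j(e)}) \geq \ell_e$. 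Under this event, $x$ and $x'$ are directly linked in $\bdcalG_{p,\lambda,r}$, so the whole segment $[x,x']$ lies in the graph. The unbounded support of the exponential distribution makes this condition have a strictly positive conditional probability $q_e > 0$ depending only on $r$ and $\ell_e$.

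The one step to execute carefully is the bookkeeping of which exponential mark at each crossroad is attached to which incident edge: because the model assigns one independent $\mathcal{E}xp(1)$ mark per incident edge at each vertex, the marks governing distinct edges of the trace are mutually independent even when they share an endpoint. It follows that $A$ and $(B_e)_{e \in E}$ are conditionally independent given $\bfX$ and $\bfT$, yielding
\[
\mathbb{P}\bigl(\textrm{Full}(\B) \bigm| \bfX, \bfT\bigr) \;\geq\; p^{|V|} \prod_{e \in E} q_e \;>\; 0 \qquad \text{almost surely,}
\]
which is exactly the claim. No substantial obstacle is anticipated beyond this finite-dimensional factorization; in particular the Cox process $\bfY$ is not needed, since direct connection between open endpoints via their exponential ranges is already enough to cover each edge of the trace.
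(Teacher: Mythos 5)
Your proof is correct, and it captures the same essential content as the paper's: both reduce to an explicit positive lower bound on $\mathbb{P}(\textrm{Full}(\B)\mid\bfX,\bfT)$---yours is $p^{|V|}\prod_{e\in E}q_e$, the paper's is $p^{|\bfX\cap\trace|}\exp(-2r\Lambda(\bfT\cap\trace))$---which is strictly positive precisely because the trace has a.s.\ finitely many vertices and a.s.\ finite total edge length. The difference lies in how this finiteness is established. You deduce it directly from local finiteness: $\bfX\cap\B$ is a.s.\ finite and each Delaunay vertex has a.s.\ finite degree, so the trace has a.s.\ finite vertex and edge sets, each edge being a finite segment. The paper instead argues by contradiction: if the lower bound were zero with positive probability, either $|\bfX\cap\trace|$ or $\Lambda(\bfT\cap\trace)$ would be infinite with positive probability, hence the trace would be unbounded with positive probability; a Mecke computation then shows that a Delaunay edge of length at least $\beta-\alpha$ emanating from $\B$ forces an empty half-disc, an event whose probability decays like $e^{-c(\beta-\alpha)^2}$, so the trace is a.s.\ contained in some $\mathfrak{B}_\beta$, a contradiction. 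Your route is shorter and avoids the quantitative estimate, which in the paper serves only to re-derive an a.s.\ boundedness fact that already follows from local finiteness of the PDT, as you observe. One minor remark on phrasing: it is not quite the events $A$ and $B_e$ themselves that are conditionally independent given $\bfX,\bfT$---each $B_e$ a priori depends on the Bernoulli marks through which crossroads and street users are present---but rather $A$ and the sufficient sub-events on the exponential marks that you substitute for the $B_e$'s; you make this substitution implicitly via the phrase ``a sufficient condition for $B_e$, given $A$,'' so the displayed bound is sound, but the independence should be attributed to the marks, not to $B_e$ itself.
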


\begin{proof}

We work by contradiction by assuming that $\mathbb{P}(\textrm{Full}(\B)\vert\ \bfX,\ \bfT)$ has a positive probability of being equal to $0$. In any case, we have
$$
\mathbb{P}(\textrm{Full}(\B)\vert\ \bfX,\ \bfT)\geq p^{\left|\bfX\cap \trace \right|}\exp\left(-2r\Lambda(\bfT \cap \trace)\right).
$$
In particular, this implies that 
$$
\mathbb{P}\Bigg(\left|\bfX\cap \trace \right|=\infty \text{ or } \Lambda(\bfT \cap \trace)=\infty\Bigg)>0.
$$
This in turns, because $\bfX$ and $\bfT$ are a.s.\ locally finite, implies that
$$
\mathbb{P}\left(\trace \text{ is unbounded}\right)>0.
$$
Fix $\beta>\alpha$. Let us denote by $B_{x,y}^+$ the ``upper'' half-ball with diameter $[x,y]$ defined by $B^{+}_{x,y}=\{z\in B((x+y)/2,\|x-y\|) \vert \det(x-y,x-z)\geq 0 \}$. $B^{-}_{x,y}$ stands for the ``lower'' half-ball, defined accordingly.

Because the event $\{\trace \nsubseteq \mathfrak{B}_{\beta}\}$ implies the existence of an edge of length at least $\beta-\alpha$. If such an edge has endpoints, say, $x$ and $y$, it is known that this implies that $B^{-}_{x,y}\cap \bfX=\emptyset$ or $B^{+}_{x,y}\cap \bfX=\emptyset$. Hence,
\begin{align*}
\mathbb{P}\left(\trace \nsubseteq \mathfrak{B}_{\beta}\right)&\leq \mathbb{P}\left(\exists x\in \bfX\cap \mathfrak{B}_{\alpha},\ \exists y\in \bfX\cap \mathfrak{B}_{\beta}^{c}  \text{ s.t.\ }\ B^{+}_{x,y}\cap \bfX =\emptyset \text{ or }B^{-}_{x,y}\cap \bfX =\emptyset\right)\\
&\leq \mathbb{E}\left[\sum_{x\in \bfX\cap\mathfrak{B}_{\alpha},\ y\in \bfX\cap\mathfrak{B}_{\beta}^c}\mathds{1}_{B^{+}_{x,y}\cap \bfX =\emptyset \text{ or }B^{-}_{x,y}\cap \bfX =\emptyset}\right].
\end{align*}
Applying two times Mecke's formula gives
\begin{align*}
 \mathbb{E}&\left[\sum_{x\in \bfX\cap\mathfrak{B}_{\alpha},\ y\in \bfX\cap\mathfrak{B}_{\beta}^c}\mathds{1}_{B^{+}_{x,y}\cap \bfX =\emptyset \text{ or }B^{-}_{x,y}\cap \bfX =\emptyset}\right]\\&=\int_{\mathfrak{B}_{\alpha}\times \mathfrak{B}_{\beta}^c}\mathbb{P}\left(B_+(x,y)\cap \bfX =\emptyset \text{ or }B^{-}_{x,y}\cap \bfX =\emptyset\right)\ dx\ dy\\
&\leq \int_{\mathfrak{B}_{\alpha}\times \mathfrak{B}_{\beta}^c}2e^{-\pi \|x-y\|^2/4}\ dx\ dy\leq \frac{\alpha^{2}}{4 } e^{-\pi (\alpha-\beta)^2/8}, 
\end{align*}
where the last inequality was obtained using that $\|x-y\|>(\alpha-\beta)$.

\smallskip

 This gives a contradiction and that 
$\mathbb{P}(\textrm{Full}(\B)\vert\ \bfX,\ \bfT)>0$ almost surely.
\end{proof}

It is classical in such situation, because of ergodicity of the model, that $\ncc$ must be trivial as events of the form $\{\ncc=K\}$ are translation invariant. Hence, there exists $K\in \mathbb{N}\cup\{\infty\}$ such that $\mathbb{P}(\ncc=K)=1$.
\begin{figure}
	
\end{figure}
\bigskip

We begin by disqualifying all finite values greater than $1$.
\begin{lem}
	\label{lem:exclusion1}
	If, for some integer $K>1$, $\mathbb{P}(\ncc = K)>0$ then ${\mathbb{P}(\ncc=1)>0}$. In particular, this implies that $\mathbb{P}(\ncc=K)=0$ for all $K\in\mathbb{N}\setminus\{0,1\}$.
\end{lem}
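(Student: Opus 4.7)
The plan is to argue by direct construction: assuming $\mathbb{P}(\ncc = K) > 0$ for some integer $K > 1$, I would exhibit a positive-probability event on which all infinite components collapse into a single one. Since the model is ergodic with respect to $\mathbb{R}^2$-translations, $\ncc$ is almost surely constant and thus $\mathbb{P}(\ncc = K) = 1$; once we have $\mathbb{P}(\ncc = 1) > 0$, ergodicity upgrades this to $\mathbb{P}(\ncc = 1) = 1$, which is incompatible with $K > 1$ and yields the ``in particular'' assertion.

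First I would pick $\alpha$ large enough that $\mathbb{P}(\ncc_\B = K) > 0$. Since each of the $K$ infinite components of $\bdcalG_{p,\lambda,r}$ is unbounded and the model is stationary, each of them almost surely intersects every sufficiently large centered ball, so the probability that the $K$ arms simultaneously penetrate $\trace$ through distinct vertices tends to $1$ as $\alpha \to \infty$. Combining $\{\ncc_\B = K\}$ with $\textrm{Full}(\B)$ is then decisive: on the intersection, Lemma \ref{prop:trace} guarantees that $\trace$ is a connected subgraph of $\bfT$, and the $\textrm{Full}$ event turns it into a single connected piece of $\bdcalG_{p,\lambda,r}$. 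The $K$ arms entering at distinct vertices are thus all glued onto this piece, and since $\ncc = K$ almost surely leaves no infinite component avoiding $\trace$, we obtain $\ncc = 1$ on $\{\ncc_\B = K\} \cap \textrm{Full}(\B)$.

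It then remains to prove that $\mathbb{P}(\{\ncc_\B = K\} \cap \textrm{Full}(\B)) > 0$. The idea is conditional independence given $(\bfX, \bfT)$: the event $\textrm{Full}(\B)$ only involves the openness marks and the Cox users lying on $\trace$, whereas $\{\ncc_\B = K\}$ involves only the complementary data outside $\trace$. These two families being conditionally independent given $(\bfX, \bfT)$, we obtain
\[\mathbb{P}(\{\ncc_\B = K\} \cap \textrm{Full}(\B)) = \mathbb{E}\big[\mathbb{P}(\ncc_\B = K \mid \bfX, \bfT)\,\mathbb{P}(\textrm{Full}(\B) \mid \bfX, \bfT)\big],\]
which is positive by Lemma \ref{lem:fulltrace} combined with $\mathbb{P}(\ncc_\B = K) > 0$. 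Hence $\mathbb{P}(\ncc = 1) > 0$ as desired.

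The main technical obstacle is the clean measurability separation required in the conditional independence step. Vertices lying in $\trace$ but outside $\B$ are endpoints of edges of $\trace$, so their openness marks formally appear in $\textrm{Full}(\B)$; yet they also act as attachment points for infinite arms lying in the closure of $\bdcalG_{p,\lambda,r} \setminus \trace$, and therefore interfere with $\{\ncc_\B = K\}$. The natural remedy is to realise the $\textrm{Full}$ event on the enlarged trace over $\Bd$ rather than $\B$---this is presumably the reason for introducing the notation $\Bd$ at the start of this section---creating a buffer region whose marks can be freely resampled without disturbing the attachment structure of the infinite arms, thereby restoring the required independence.
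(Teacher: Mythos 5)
Your approach is essentially the one in the paper: pick $\alpha$ large, combine an event forcing all infinite components through the trace, glue them via $\textrm{Full}(\B)$ using Lemma \ref{prop:trace}, split the probability by conditional independence given $(\bfX,\bfT)$, invoke Lemma \ref{lem:fulltrace}, and finish by ergodicity. However, two points need attention.

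First, the statement ``since $\ncc = K$ almost surely leaves no infinite component avoiding $\trace$'' is false for a fixed $\alpha$: nothing prevents an infinite component from lying entirely outside the trace of a given ball. The paper handles this precisely by introducing the random variable $N_\alpha$ (the number of infinite components strictly contained in $\bfT\setminus\trace$) and showing one can choose $\alpha$ so that $\mathbb{P}(N_\alpha=0,\ \ncc=K\mid\bfX,\bfT)>0$ on a positive-probability event. You then need to carry the event $\{N_\alpha=0\}$ through the whole argument; your choice of $\{\ncc_\B=K\}$ alone does not exclude stray infinite components avoiding the trace. This is a genuine gap, not merely informality, because the final implication ``on the good event, $\ncc=1$'' fails without it.

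Second, the measurability concern you raise at the end --- that a vertex $y\in\trace\setminus\B$ carries an openness mark $V_y$ appearing both in $\textrm{Full}(\B)$ and in the data outside the trace --- is a sharp observation, and indeed it is exactly the kind of subtlety a reader should check. The paper simply asserts that, conditionally on $\bfX$, $\ncc_\B$ and $N_\alpha$ depend only on $\bfT\setminus\trace$ while $\textrm{Full}(\B)$ depends only on $\trace$, and so they factorize; it does not use a buffer at scale $2\alpha$ in this lemma (the macro $\Bd$ is declared but never invoked in the proof). Your suggestion to enlarge to $\Bd$ is a plausible repair, but you leave it as a ``presumable'' sketch rather than carrying it out, so as written your proposal both flags the issue and fails to close it. You should either (a) follow the paper and argue directly that $\ncc_\B$ and $N_\alpha$, as defined via disjointness in the closure of $\bdcalG_{p,\lambda,r}\setminus\trace$, are indeed $\sigma(\bfX,\,\text{data outside }\trace)$-measurable, or (b) explicitly run the resampling argument on an enlarged annulus. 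Without one of these, the factorization step --- and hence the positivity conclusion --- is not established.
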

\newcommand{\Fu}{\textrm{Full}}
\begin{proof}
	Let assume that, for some $K>1$, $\mathbb{P}(\ncc = K)>0$. In particular, we also must have $\mathbb{P}(\ncc=K \vert\ \bfX,\ \bfT)>0$ on an event $A$ (measurable with respect to $\bfX$) of positive probability. Let $N_\alpha$ be the number of infinite connected components strictly included in $\mathbf{T}\setminus\trace$. There exists $\alpha>0$ such that $\mathbb{P}(N_\alpha = 0,\ \ncc=K\vert\ \bfX,\ \bfT)>0$ on $A$ (which means that for $\alpha$ large enough, all connected component intersect $\trace$). Moreover, we have that $\mathbb{P}(N_\alpha=0,\ \ncc_{\B}\geq K\vert\ \bfX,\ \bfT  )\geq \mathbb{P}(N_\alpha = 0,\ \ncc=K\vert\ \bfX,\ \bfT)$.  \\
	
	We have that (conditionally on $\bfX$) $\ncc_{\B}$ and $N_\alpha$ only depends on $\bfT\setminus\trace$ and $\Fu(\B)$ on $\trace$, but these are independent conditionally on $\bfX$ (because all edges are independent conditionally on $\bfX$). Hence,
	\begin{align*}
		&\mathbb{P}(\ncc=1\vert\ \bfX,\ \bfT)\geq\mathbb{P}(N_\alpha=0,\ \ncc_{\B}\geq K,\ \Fu(\B)\vert\ \bfX,\ \bfT  )\\&\quad=\mathbb{P}(N_\alpha=0,\ \ncc_{\B}\geq K\vert\ \bfX,\ \bfT  )\mathbb{P}( \textrm{Full}(\B)\vert\ \bfX,\ \bfT  )>0\text{ a.s. on }A.
	\end{align*} 
	The first inequality is a consequence of Lemma \ref{prop:trace} while the last inequality is a consequence of Lemma \ref{lem:fulltrace}. Integrating with respect to $\bfX$ gives the result.
\end{proof}
So, it remains to remove the case $\ncc=\infty$. We use an adaptation of Burton-Keane method. To do so, we say that a point $x\in\bfX$ is a \textit{trifurcation} if
\begin{itemize}
    \item $x$ belongs to an infinite connected component $\mathcal{C}$ in $\bdcalG_{p,\lambda,r}$
    \item $\mathcal{C}\setminus\{x\}$ consists of three disjoint infinite connected components.
\end{itemize}

Conditionally on $\bfX$ and $\mathcal{G}_{p,\lambda,r}\setminus \trace$, on the event $\{\ncc_{\B}\geq 3\}$, there exists (at least) three distinct points $x,y,z\in \trace\cap \bfX$ such that three infinite connected components penetrate $\trace$ through these points. We denote by $\mathcal{E}$ the event that
\begin{itemize}
    \item all vertices in $\trace \setminus \B $ except $x,y$ and $z$ are closed,
    \item for any points among $x,y$ or $z$, there exists one and only one open edge going from the considered point into $\B$ which is open,
    \item there exists a unique open path in $\trace$ connecting $x,y$ and $z$,
    \item all other edges in $\trace$ are closed.
\end{itemize}
It is important at this point to remark that the existence of such connecting paths is guaranteed by Lemma \ref{prop:trace}. Moreover, the paths from $x$ to $y$ and from $x$ to    $z$ pass through $\B$ and these paths bifurcate at some point $t$ of $\bfX\cap \B$. This point $t$ must then be a trifurcation.

In particular, on the event $\{\ncc_{\B}\geq 3\}$, we have the rough bound
\begin{equation}
\label{eq:newControl}
\mathbb{P}(\mathcal{E}\mid \bfX,\ \bfT,\ \mathcal{G}_{p,\lambda,r}\setminus \trace)\geq \mathbb{P}(\textrm{Full}(\B)\vert\ \bfX,\ \bfT) \mathcal{H}_{\alpha}>0 \text{ a.s.,}
\end{equation}
with
$$
\mathcal{H}_{\alpha}=(1-p)^{\left|\bfX\cap \trace \right|}e^{-\lambda \Lambda(\bfT \cap \trace) }(1-e^{-r\inf_{\{x,y\}\subset \mathbf{X}\cap\B}\|x-y\|}).
$$
The fact that $\mathcal{H}_{\alpha}>0$ a.s.\ is obtained similarly as Lemma \ref{lem:fulltrace}.
 It is important to note that the choice of $x,y,z$ depends on $\mathcal{G}_{p,\lambda,r}\setminus \trace$ but that the lower bound in the above inequality does not.

\medskip

In the following, we also need the Palm distribution of, $\bfX$ denoted $\mathbb{P}^x$. Under $\mathbb{P}^{x}$, $\bfX$ has the law of a Poisson process with an additional deterministic point at $x$.

We then have the lemma.
\begin{lem}
	\label{lem:trifurcation}
	If $\mathbb{P}(\ncc=\infty)=1$, then
	\[
	\mathbb{P}^{0}(0\text{ is a trifurcation})>0.
	\]
	In addition, if $T_{\alpha}$ denotes the number of trifurcation in the ball $\B$ (for $\alpha>0$), we have
	\begin{equation}
		\label{eq:trifurcationControl}
		\mathbb{E}\left[T_{\alpha}\right]=|\B|\mathbb{P}^{0}(0\text{ is a trifurcation})=\pi\alpha^{2}\mathbb{P}^{0}\left(0\text{ is a trifucation}\right).
	\end{equation}
\end{lem}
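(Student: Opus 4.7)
The plan is to establish the two claims separately, starting with the equality which is a direct consequence of the refined Campbell (Mecke) formula for the Poisson process $\bfX$.

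For the identity $\mathbb{E}[T_\alpha] = |\B|\,\mathbb{P}^0(0 \text{ is a trifurcation})$, the event that a given point $x$ is a trifurcation is measurable with respect to the translated configuration, and the whole model is stationary under translations of $\mathbb{R}^2$. Writing $T_\alpha = \sum_{x \in \bfX} \mathds{1}_{x \in \B} \mathds{1}_{x \text{ is a trifurcation}}$ and applying Mecke's formula to the Poisson process $\bfX$ (with its independent marks $V_x$, $(\mathcal{E}_{x,k})$, and with the conditionally Poisson process $\bfY$ also stationary under translations) gives directly
\[ \mathbb{E}[T_\alpha] = \int_{\B} \mathbb{P}^x(x \text{ is a trifurcation})\, dx = |\B|\, \mathbb{P}^0(0 \text{ is a trifurcation}), \]
by the translation invariance of the Palm distribution. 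This step is standard and should not pose any real difficulty.

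For the positivity $\mathbb{P}^0(0 \text{ is a trifurcation}) > 0$, the strategy is the classical Burton--Keane reconfiguration, adapted to the present continuum framework via the event $\mathcal{E}$ already introduced. Assuming $\mathbb{P}(\ncc = \infty) = 1$, I first choose $\alpha$ large enough so that with positive probability the ball $\B$ is reached by at least three distinct infinite clusters of $\bdcalG_{p,\lambda,r}\setminus \trace$ entering $\trace$ through three distinct vertices $x,y,z \in \bfX \cap \trace$; this is possible since infinite clusters meeting $\B$ must reach out through $\trace$, and for $\alpha$ large the event $\{\ncc_\B \geq 3\}$ has positive probability.

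Conditionally on $\bfX$, $\bfT$ and $\bdcalG_{p,\lambda,r}\setminus \trace$, on $\{\ncc_\B \geq 3\}$ I invoke the lower bound \eqref{eq:newControl}, which guarantees that the reconfiguration event $\mathcal{E}$ holds with positive conditional probability. Under $\mathcal{E}$, Lemma \ref{prop:trace} ensures the existence of a connecting path in $\trace$ between $x$, $y$, $z$, and since the paths from $x$ to $y$ and from $x$ to $z$ must enter $\B$ and share an initial segment (they start through the unique open edge from $x$ into $\B$), they must separate at some vertex $t \in \bfX \cap \B$; by construction this $t$ belongs to the merged infinite component and $t$'s removal disconnects the three infinite pieces, so $t$ is a trifurcation. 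Integrating gives $\mathbb{P}(\exists\, t \in \bfX \cap \B \text{ trifurcation}) > 0$, hence $\mathbb{E}[T_\alpha] > 0$, and the Mecke identity established above then yields $\mathbb{P}^0(0 \text{ is a trifurcation}) > 0$.

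The main obstacle will be the bookkeeping in the reconfiguration step: ensuring that the modification performed inside $\trace$ (via the event $\mathcal{E}$) is genuinely independent of $\bdcalG_{p,\lambda,r}\setminus \trace$ conditionally on $\bfX$ and $\bfT$, and that the randomly chosen triple $(x,y,z)$ (which depends on $\bdcalG_{p,\lambda,r}\setminus \trace$) does not spoil the lower bound \eqref{eq:newControl}. This is handled exactly as in Lemma \ref{lem:exclusion1}: the conditional probability of $\mathcal{E}$ only depends on $\bfX, \bfT$ and the fixed vertices $x,y,z \in \bfX \cap \trace$, not on the particular realization of $\bdcalG_{p,\lambda,r}\setminus \trace$, so the product bound survives after conditioning on the exterior configuration.
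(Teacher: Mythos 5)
Your proposal is correct and follows essentially the same route as the paper: establish the Mecke identity by stationarity of the marked Poisson process, then obtain $\mathbb{E}[T_\alpha]>0$ by choosing $\alpha$ so that $\{\ncc_\B\geq 3\}$ has positive conditional probability and applying the lower bound \eqref{eq:newControl} on the reconfiguration event $\mathcal{E}$, noting that $\mathcal{E}$ together with Lemma \ref{prop:trace} forces a trifurcation inside $\B$. The only cosmetic difference is the order in which you present the two halves; your closing remark about the bound being independent of the choice of $(x,y,z)$ is exactly the point the paper emphasizes after stating \eqref{eq:newControl}.
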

\begin{proof}
	As in the proof of Lemma \ref{lem:exclusion1}, under the hypothesis that ${\mathbb{P}(\ncc=\infty)=1}$, there exists $\alpha>0$ such that $\mathbb{P}(\ncc_{\B}\geq 3\vert\ \bfX,\ \bfT)>0$ on an event $A$ of positive probability (measurable with respect to $\bfX$). 
 Now,
	\begin{align*}
		\mathbb{P}\left(T_{\alpha}>0\vert\ \bfX,\ \bfT\right)&\geq \mathbb{E}\left[\mathds{1}_{\ncc_{\B}\geq 3}\ \mathbb{P}\left( \mathcal{E}\vert\ \bfX,\ \bfT, \mathcal{G}_{p,\lambda,r}\setminus \trace\right)\vert\  \bfX,\ \bfT\right]\\&\geq \mathbb{P}\left(\ncc_{\B}\geq3\vert\ \bfX,\ \bfT\right)\mathcal{H}_{\alpha}>0\text{ a.s. on }A, 
	\end{align*}
 where the last inequality follows from \eqref{eq:newControl}.
	Integrating with respect to $\bfX$ then gives $\mathbb{P}(T_{\alpha}>0)>0$, and $\mathbb{E}[T_{\alpha}]>0$. But according to Mecke formula
	\[
	\mathbb{E}\left[T_{\alpha}\right]=\mathbb{E}\left[\sum_{x\in \bfX\cap \B}\mathds{1}_{x\text{ is a trifurcation}}\right]=\int_{\B} dx\ \mathbb{P}^{x}\left(x\text{ is a trifucation}\right).
	\]
	But, stationarity gives
	\[
	\int_{\B} dx\ \mathbb{P}^{x}\left(x\text{ is a trifucation}\right)=|\B|\mathbb{P}^{0}\left(0\text{ is a trifucation}\right),
	\]
	hence the Lemma.
\end{proof}
We can now conclude.
\setcounter{thm}{1}

\begin{thm}
	There exists a unique infinite connected component in the supercritical phase, that is
	\[
	\mathbb{P}(\# C \leq 1)=1.
	\]
\end{thm}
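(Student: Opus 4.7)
The plan is to apply the classical Burton--Keane strategy to rule out the case $\ncc = \infty$. Combined with Lemma~\ref{lem:exclusion1}, this forces $\ncc \in \{0,1\}$ almost surely, which is exactly the statement. Assume for contradiction that $\mathbb{P}(\ncc = \infty) = 1$. Then Lemma~\ref{lem:trifurcation} provides a constant $c := \mathbb{P}^0(0 \text{ is a trifurcation}) > 0$ such that, for every $\alpha > 0$,
\[
\mathbb{E}[T_\alpha] = \pi \alpha^2 c .
\]
The whole strategy is to show that this quadratic growth contradicts a deterministic bound on $T_\alpha$ that grows only linearly in $\alpha$.

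More precisely, I will establish the deterministic inequality
\[
T_\alpha + 2 \leq M_\alpha , \qquad M_\alpha := \#\{\text{edges of } \bfT \text{ crossing } S_{2\alpha}\} .
\]
Fix a realization. For each trifurcation $t \in \mathfrak{B}_\alpha$, removing $t$ splits its cluster into three disjoint infinite components; pick one self-avoiding infinite path in $\bdcalG_{p,\lambda,r}$ starting at $t$ inside each of them, and stop each such path the first time it meets $S_{2\alpha}$. Because $\bfT$ is planar and $\bdcalG_{p,\lambda,r} \subset \bfT \subset \mathbb{R}^2$, the union over all $t \in \mathfrak{B}_\alpha$ of these path triples can be pruned to a planar forest inside $\mathfrak{B}_{2\alpha}$ in which every trifurcation of $\mathfrak{B}_\alpha$ is an internal vertex of degree at least $3$ and every leaf is an edge of $\bfT$ crossing $S_{2\alpha}$. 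The elementary tree inequality ``number of internal vertices of degree $\geq 3$, plus $2$, is at most the number of leaves'' then yields the announced bound.

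It remains to estimate $\mathbb{E}[M_\alpha]$. By Mecke's formula, this expectation is controlled by the expected number of pairs of Poisson points $(x,y)$ connected by a Delaunay edge that crosses $S_{2\alpha}$. Separating the contributions of short and long edges, the former are bounded by a constant times the area of a bounded annulus around $S_{2\alpha}$, that is $O(\alpha)$, while the latter are negligible thanks to the exponential tails of Delaunay edge lengths and the logarithmic control on the vertex degree recalled in the proof of Item~$2$ via~\cite{BC}. Hence $\mathbb{E}[M_\alpha] = O(\alpha)$. Combined with $\mathbb{E}[T_\alpha] = \pi c \alpha^2$, letting $\alpha \to \infty$ produces the required contradiction.

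The main obstacle is the geometric/topological organization of the three infinite paths at each trifurcation into a non-crossing planar forest so that the tree inequality applies. This is the heart of the Burton--Keane argument and relies crucially on the planarity of $\bfT$. All other ingredients (the trifurcation identity from Lemma~\ref{lem:trifurcation}, the Mecke boundary estimate, and the tree inequality itself) are classical.
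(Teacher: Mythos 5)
Your proof is correct and follows essentially the same route as the paper's: Burton--Keane peeling, the trifurcation identity of Lemma~\ref{lem:trifurcation}, a forest whose internal branching points are the trifurcations, the tree inequality bounding trifurcations by boundary crossings, a Mecke-type estimate giving $O(\alpha)$ for the boundary count, and finally the $\alpha^2$ vs.\ $\alpha$ contradiction. The only difference of substance is cosmetic: the paper bounds the boundary term by pairs $(x,y)$ on either side of $\partial B_\alpha$ with an empty half-disk (a necessary condition for a Delaunay edge) and computes the Gaussian integral directly, whereas you count edges of $\bfT$ crossing $S_{2\alpha}$ and split short from long edges; both yield the same linear-in-$\alpha$ bound. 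One small conceptual correction: the Burton--Keane peeling argument does \emph{not} rely on planarity of $\bfT$. The forest is a purely combinatorial object and the leaf-counting inequality is graph-theoretic, which is precisely why the classical argument works on $\mathbb{Z}^d$ in every dimension; the paper accordingly cites \cite{BurtonKeane} and \cite[Section 8.2]{grimmett} and describes the peeling as ``purely geometric'' rather than planar. So the step you flag as the ``main obstacle'' is in fact standard and requires no planar embedding.
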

\begin{proof}
	We work by contradiction by assuming that $\mathbb{P}(\# C=\infty)>0$. To simplify calculation, we work on the square box $B_\alpha$ for $\alpha>0$.
   Let $T_{\alpha}$ be the number of trifurcation in $B_\alpha$.
	According to the peeling strategy of Burton-Keane argument (which can be applied here for its nature is purely geometric. See \cite{BurtonKeane} or \cite[Section 8.2]{grimmett}.), there exists a subgraph $\mathbb{T}$ of $\mathcal{G}_{p,\lambda,r}\cap B_\alpha$ which is a forest whose branching points are trifurcations in $B_\alpha$. In particular, $\# \mathbb{T}\cap \partial B_\alpha\geq T_{\alpha}$ where $\# \mathbb{T}\cap \partial B_\alpha$ corresponds to the number of edges of the forest intersecting the boundary of the ball. But,
	\begin{equation*}
		\mathbb{E}\left[\# \mathbb{T}\cap \partial B_\alpha\right]\leq\mathbb{E}\left[\sum_{x\in \bfX\cap B_\alpha,\ y\in \bfX\cap B_\alpha^{c}}\mathds{1}_{x\sim y}\right]\leq2\mathbb{E}\left[\sum_{x\in \bfX\cap B_\alpha,\ y\in \bfX\cap B_\alpha^{c}}\mathds{1}_{\bfX\cap B^{+}_{x,y}=0}\right],
	\end{equation*}
	where $B^{+}_{x,y}$ is defined in the proof of Lemma \ref{lem:fullcoverage}. The last inequality above follows, once again, from the fact that one of the two half-ball of diameter $[x,y]$ being empty of point of $\bfX$ is a necessary condition for $x\sim y$.
	Now, using two times Mecke formula entails
	\begin{align*}
		\mathbb{E}\left[\sum_{x\in \bfX\cap B_\alpha,\ y\in \bfX\cap B_\alpha^{c}}\mathds{1}_{\bfX\cap B^{+}_{x,y}=0}\right]&=\int_{B_\alpha}dx\ \mathbb{E}^{x}\left[\sum_{ y\in \bfX\cap B_\alpha^{c}}\mathds{1}_{\bfX\cap B^{+}_{x,y}=0}\right]\\
		&=\int_{B_\alpha}dx \int_{B_\alpha^{c}}dy\ \mathbb{P}\left(\bfX\cap B^{+}_{x,y}=0\right)\\
		&=\int_{B_\alpha\times B_\alpha^{c}}e^{-\pi\|x-y\|^{2}/2}\ dx dy.
	\end{align*}
	Some computations now gives
	\begin{multline*}
		\int_{B_\alpha\times B_\alpha^{c}}e^{-\pi\|x-y\|^{2}/2}\ dx dy\leq 4\alpha \int_{[-\alpha/2,\alpha/2]\times [\alpha/2,\infty)}\exp(-\pi(x-y)^{2}/2)\  dxdy\\=4\alpha \int_{0}^{\alpha}\int_{x}^{\infty}e^{-\pi y^{2}/2}\ dy dx\leq \frac{8 \alpha}{\pi}.
	\end{multline*}
	Hence, we finally obtain in conjunction with Lemma \ref{lem:trifurcation} that,
	\[
	\alpha^{2}\mathbb{P}\left(0\text{ is a trifucation in }\mathcal{G}^{0}_{p,\lambda,r}\right)=\mathbb{E}\left[T_{\alpha}\right]\leq \mathbb{E}\left[\# \mathbb{T}\cap \partial B_\alpha\right]\leq \frac{16 \alpha}{\pi},
	\]
	which gives a contradiction.
	
\end{proof}

We end the section with the proof of Lemma \ref{prop:trace}. The proof start with a geometric result.
\begin{lem}
\label{lem:geom}
Let $\mathcal{C}$ be an open Euclidean ball. Let $x,y\in\mathcal{C}$, and $a,b\in \mathbb{R}\setminus\mathcal{C}$ such that $[A,B]\cap \mathcal{C}\neq \emptyset$. Assume that, $[A,B]$  separates $\mathcal{C}$ into two open subsets $\mathcal{C}_1$ and $\mathcal{C}_2$ such that $x\in\mathcal{C}_1$ and $y\in\mathcal{C}_2$. Then,  for any closed ball $\mathcal{B}$ such that $\{A,B\}\subset \partial \mathcal{B}$,  $x\in \mathcal{B}$ or $y\in \mathcal{B}$.
\end{lem}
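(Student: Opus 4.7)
The plan is to argue by contradiction, showing that if neither $x$ nor $y$ lies in $\mathcal{B}$ then the two circles $\partial\mathcal{B}$ and $\partial\mathcal{C}$ would have to meet in at least four points, which is impossible for two distinct circles in the plane.

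First I locate a canonical crossing point. Since $\mathcal{C}$ is an open convex set (a Euclidean ball), we have $[x,y]\subset\mathcal{C}$; and because $x$ and $y$ lie in distinct connected components of $\mathcal{C}\setminus[A,B]$, the segment $[x,y]$ must meet $[A,B]$ at a point $z\in\mathcal{C}$. Since $A,B\notin\mathcal{C}$, the point $z$ is strictly between $A$ and $B$ on the line $(AB)$. Applying the power-of-a-point relation with respect to $\mathcal{B}$, whose boundary contains $A$ and $B$, gives $\mathrm{Power}(z,\mathcal{B})=-|zA|\cdot|zB|<0$, so $z$ lies in the interior of $\mathcal{B}$.

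Now suppose for contradiction that $x,y\notin\mathcal{B}$. Then the segment $[x,y]$ starts outside $\mathcal{B}$, enters its interior (at $z$), and leaves again; so it meets $\partial\mathcal{B}$ in exactly two points $x',y'$ with $x,x',z,y',y$ arranged in this order along the segment. Since $\mathcal{C}$ is open and convex and contains $x,z,y$, the whole segment $[x,y]$ lies in $\mathcal{C}$, hence $x',y'\in\mathcal{C}$. On the circle $\partial\mathcal{B}$ we thus have four distinct points $A,B,x',y'$, and the two chords $[A,B]$ and $[x',y']$ meet at $z$ in the interior of $\mathcal{B}$. The classical fact that two chords of a circle cross inside the disc iff their endpoints alternate along the circle forces the cyclic order on $\partial\mathcal{B}$ to be $A,x',B,y'$.

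Reading membership in $\mathcal{C}$ along this cyclic order gives the pattern \emph{outside, inside, outside, inside}, so $\partial\mathcal{B}$ must cross $\partial\mathcal{C}$ at least four times. The two circles are distinct, because otherwise $x'\in\partial\mathcal{C}\cap\mathcal{C}=\emptyset$, so this contradicts the fact that two distinct circles in the plane intersect in at most two points. The main obstacle is bookkeeping rather than conceptual: one must check all the strict inclusions ($z$ strictly between $A$ and $B$, $x',y'$ strictly inside $\mathcal{C}$, and $\partial\mathcal{B}\neq\partial\mathcal{C}$) to guarantee four honest alternations; each of these follows from openness of $\mathcal{C}$ combined with $A,B\notin\mathcal{C}$.
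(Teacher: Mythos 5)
Your proof is correct in spirit but takes a genuinely different route from the paper's. The paper's argument is constructive: with $\{C,D\}:=[A,B]\cap\partial\mathcal{C}$, it builds an explicit auxiliary ball $\widetilde{\mathcal{B}}\subset\mathcal{B}$ whose boundary passes through $C$ and $D$, then shows that one of the two arcs of $\partial\mathcal{C}$ joining $C$ to $D$ lies inside $\widetilde{\mathcal{B}}$ (again using that two distinct circles meet in at most two points), hence so does the whole piece $\mathcal{C}_1$ or $\mathcal{C}_2$ enclosed by that arc and the chord, hence $x$ or $y$. You instead argue by contradiction via a topological alternation count on $\partial\mathcal{B}$: the four points $A,x',B,y'$ sit on $\partial\mathcal{B}$ in that cyclic order, alternating outside/inside $\mathcal{C}$, and the two-intersections bound for circles then gives a contradiction. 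Your route is more economical and avoids the paper's somewhat opaque chain of triangle inequalities involving $H$, $E$, $O$, $U$, $V$, at the price of being non-constructive; both proofs ultimately hinge on the same elementary fact about circle intersections.

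There is one genuine gap that your closing "strictness checklist" does not cover. The hypothesis only gives $A,B\notin\mathcal{C}$, which, $\mathcal{C}$ being open, allows $A$ or $B$ to lie on $\partial\mathcal{C}$. If both $A$ and $B$ lie on $\partial\mathcal{C}$, the signed-distance-to-$\partial\mathcal{C}$ values along the cyclic order $A,x',B,y'$ read $0,-,0,-$ rather than $+,-,+,-$, and the intermediate value argument alone yields only the two intersection points $A,B$, not four; so the contradiction you state does not directly follow. To close this case you need one more fact: two distinct circles with two common points cross transversally at both, so the two open arcs of $\partial\mathcal{B}$ determined by $\{A,B\}$ lie on opposite sides of $\partial\mathcal{C}$. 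This contradicts your finding that $x'$ and $y'$, which lie on opposite arcs, are both inside $\mathcal{C}$. (When exactly one of $A,B$ is on $\partial\mathcal{C}$, the IVT already produces three intersection points and you are done.) With that supplementary observation your argument is complete.
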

\begin{proof}[Proof of Lemma \ref{lem:geom}]
Let $\mathcal{B}$ be any closed ball such that $\{A,B\}\subset \partial \mathcal{B}$. Let also be $C$ and $D$ such that $\{C,D\}=[A,B]\cap \partial\mathcal{C}$. Let $E$ be the center of $\mathcal{B}$. The perpendicular bisector of the segment $[C,D]$ separates the plane in two half-planes. We can assume without loss of generality that $E$ is in the same half-plane as $D$, so that the segment $[E,C]$ intersects the perpendicular bisector of $[C,D]$ at some point $H$ (see Figure \ref{fig:geom}).

\medskip 

Let some point $z$ such that $d(z,H)\leq d(H,C)$. Then, $d(z,E)\leq d(z,H)+d(H,E)\leq d(H,C)+d(H,E)$, but, because $H\in [E,C]$, $d(H,C)+d(H,E)=d(E,C)$. Hence, since $C\in \mathcal{B}$, we have $d(z,E)\leq d(E,C)\leq d(E,B)$. This says that the ball $\widetilde{\mathcal{B}}$ of radius $d(H,C)$ with center $H$ is included in $\mathcal{B}$.

\medskip

Now, since $\{A,B\}\subset \partial C\cap \partial \widetilde{\mathcal{B}}$, it follows that the center $O$ of $\mathcal{C}$ and $H$ both belongs to the perpendicular bisector of the segment $[C,D]$. Let $U$ and $V$ be the intersection points of the perpendicular bisector of $[C,D]$ with $\partial \mathcal{C}$ (see Figure \ref{fig:geom2}). If $H\notin [U,V]$ then either $V\in\widetilde{\mathcal{B}}$ or $U\in\widetilde{\mathcal{B}}$ because $(C+D)/2\in\widetilde{\mathcal{B}}\cap [U,V]$. If $H\in [U,V]$, then either $d(H,U)=d(O,U)-d(H,O)$ or $d(H,V)=d(O,V)-d(H,O)$. Assume the latter, then, since $d(O,C)\leq d(H,C)+d(H,O)$, we have
$d(H,V)\leq d(O,V)+d(H,C)-d(O,C)=d(H,C)$. Thus, $V\in \widetilde{\mathcal{B}}$. 

\medskip

It follows that either $V$ or $U$ belongs to $ \widetilde{\mathcal{B}}$. Because, two distinct circles have at most two intersection points, it follows that the corresponding circular arc ($(CVD)$ or $(CUD)$) is a subset of $\widetilde{\mathcal{B}}$. From this, it follows that either $x$ or $y$ lies in $\widetilde{\mathcal{B}}\subset B$. This ends the proof.
\end{proof}

\begin{figure}
\includegraphics[scale = 0.4, trim= 100 100 0 60, clip]{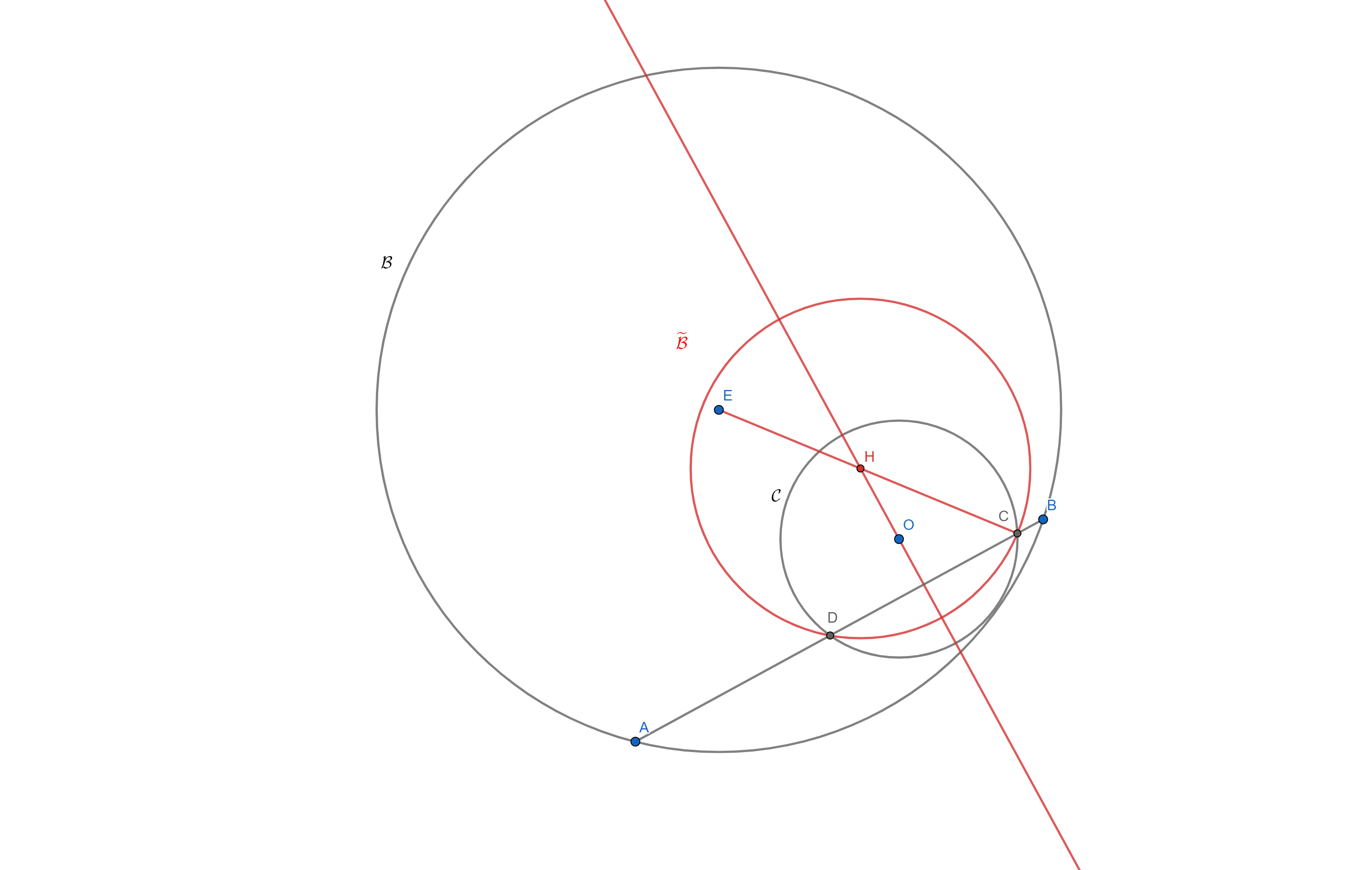}
\caption{Contruction of $\widetilde{\mathcal{B}}$}
	\label{fig:geom}
\end{figure}
\begin{figure}
	\includegraphics[scale = 0.4, trim= 100 300 400 60, clip]{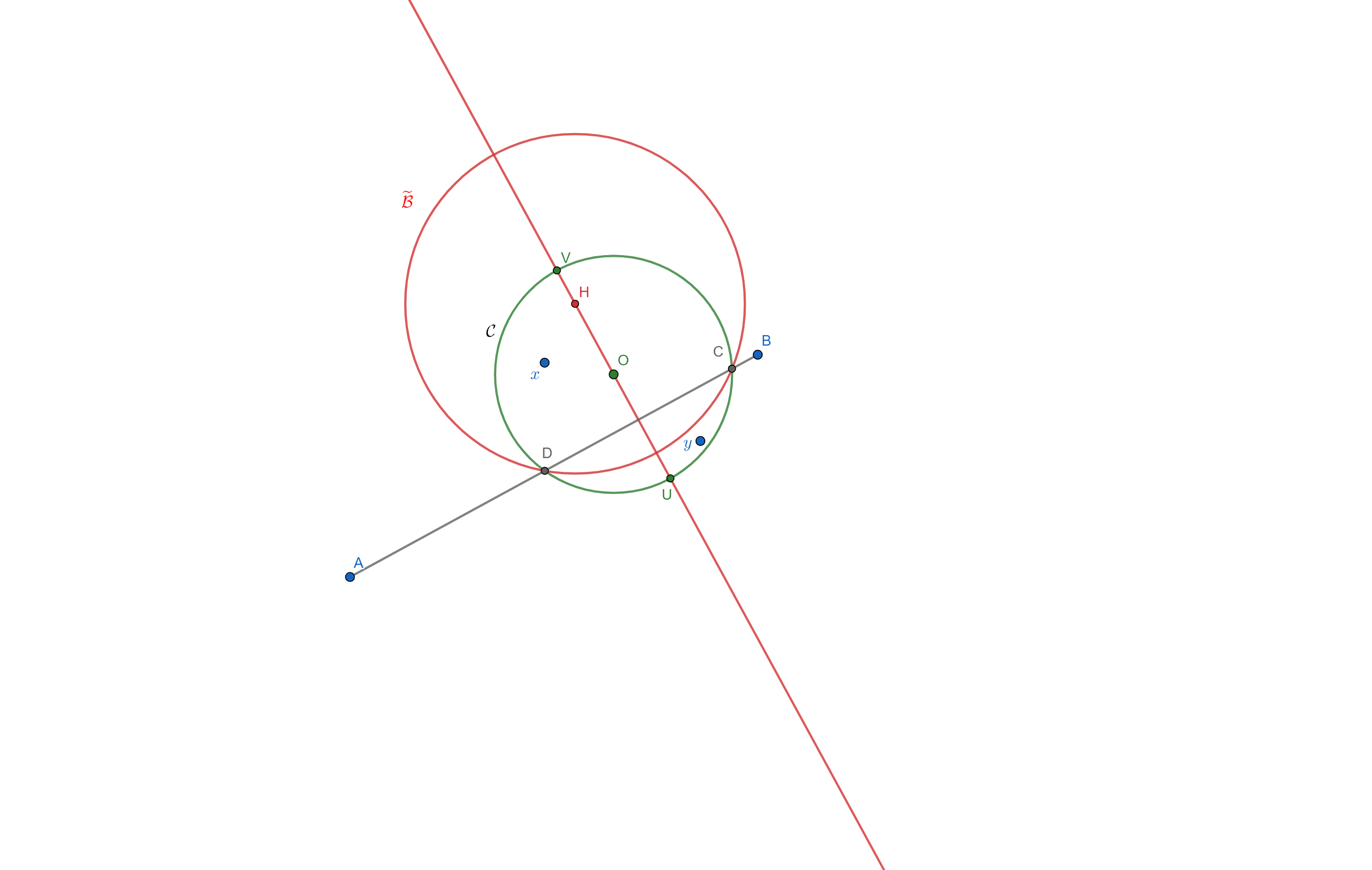}
    \caption{Proof that $\widetilde{\mathcal{B}}$ contains $x$ or $y$.}
	\label{fig:geom2}
\end{figure}
With this in hands, we can now prove Lemma \ref{prop:trace}.
\begin{proof}[Proof of Proposition \ref{prop:trace}]
We show that the subgraph induced by the points of $\gamma$ in $\mathcal{C}$ is connected. By construction, this would directly imply that the trace is connected.
	The proof is based on Bowyer–Watson algorithm \cite{bowyer,watson}, which gives an iterative construction of the Delaunay triangulation. Thus, we need to recall some facts on this algorithm. It is based on the sequential construction of the triangulation. Let $\nu$ a finite subset of $\gamma$ and $x\in \nu$. Assume that the triangulation is already constructed for $\gamma\setminus\nu$. Then, adding point $x$ to $\bfT_{\gamma\setminus \nu}$ proceeds as follows (Note that the triangulation $\bfT_{\gamma\setminus \nu}$ already covers $\mathbb{R}^{2}$ with triangles).\\
	
	\bigskip
	
	{\bf Add point algorithm}
	\begin{enumerate}
		\item For each triangle $T$ in $\bfT_{\gamma\setminus \nu}$ check if the circumcircle of $T$ contains $x$. If it does:
		\begin{itemize}
			\item Then, for each edge $e$ of $T$, check if the neighboring triangle of $T$, having also edge $e$, has $x$ in its circumcircle.
			\item If so, remove $e$ from the triangulation.
		\end{itemize} 
		\item Step 1. creates, in the triangulation, a polygonal convex hole containing $x$.
		\item For each vertex $y$ of the polygonal boundary of the convex hole, add the edge $[y,x]$ to the triangulation. 
	\end{enumerate}
	
	With this tool in hand, we can now prove the lemma. The proof works by induction on the cardinal of $\gamma\cap\mathcal{C}$ which is finite by the locally finite hypothesis.
	
	\medskip
	
	\noindent
	{\bf Case $\#\gamma\cap \mathcal{C}=2$:}\\
	Assume that $\gamma\cap \mathcal{C}=\{x,y\}$.
	To prove the lemma in the case $\#\gamma\cap \mathcal{C}=2$, we only need to construct a circle passing through $x$ and $y$ containing no point of $\gamma$.
	 We build a circle passing through $x$ and $y$ and included in $\mathcal{C}$ (so because $\#\gamma\cap \mathcal{C}=2$ there won't be any other point of $\gamma$ inside the circle, and $x$ and $y$ would be connected in the triangulation). To do so, take the perpendicular bisector of $[x,y]$: this separates $\mathbb{R}^{2}$ into two half-planes containing either $x$ or $y$. Assume that the center $\Omega$ of $\mathcal{C}$ is in the half-plane of $y$. Then, the segment $[\Omega,x]$ intersects the bisector of $[x,y]$ at some point (say $F$). Then, it can be shown that the circle of center $F$ and passing through $x$ and $y$ is included in $\mathcal{C}$. Hence, there exists an edge between $x$ and $y$ in $\bfT_{\gamma}$, so that the trace is connected (see Figure \ref{fig:inductionpart1}).
	\begin{figure}
 \includegraphics[scale = 0.4, trim= 100 300 400 60, clip]{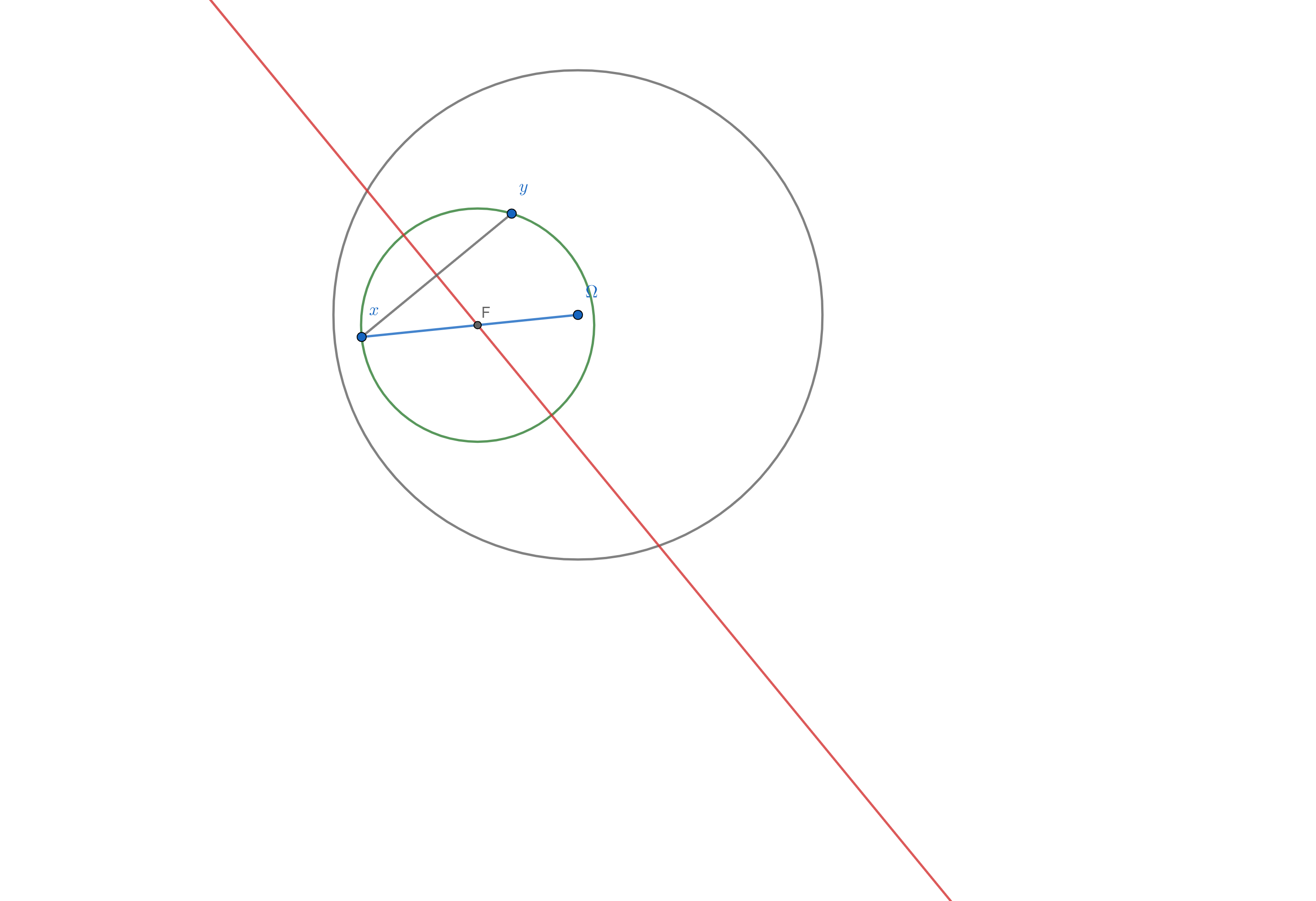}
 \caption{Construction of an inner circle}
 \label{fig:inductionpart1}
\end{figure}
	\bigskip
	
	\noindent
	{\bf Induction: }\\
	Assume that the result is true for any $\gamma$ such that $\#\gamma\cap \mathcal{C}=n$ (for $n\geq 2$). Now take $x\in \mathcal{C}\setminus \gamma$ and let us show that the trace Delaunay triangulation of $\gamma\cup\{x\}$ is connected.  
	By contradiction, we make the hypothesis that the trace of $\bfT_{\gamma\cup\{x\}}$ is not connected.
  \medskip

First, note that, according to Lemma \ref{lem:geom}, the polygonal convex hole (created by the algorithm) surrounding $x$ cannot contain edges which separate the ball $\mathcal{C}$ into two subsets containing points of $\gamma\cap\mathcal{C}$. Hence, the boundary of the hole contains at least one point, say $u$, of $\gamma\cap \mathcal{C}$. Thus, $x$ must be connected to $u$ at least. But, $u$ was connected to all other vertices in the trace of $\bfT_{\gamma}$ but not in the trace of $\bfT_{\gamma\cup \{x\}}$. Hence, there exists $a$ and $b$ in $\gamma$ such that $[a,b]\subset \bfT_{\gamma}$ but $a$ and $b$ are not connected after the addition of $x$.

So, all this implies that the edge $[a,b]$ has been removed by the {\bf ``add point algorithm''}, and so lied inside the convex hole created by the algorithm. This implies that $a$ and $b$ are both vertices of the polygon surrounding $x$. But if so, by construction, $x$ is connected to $a$ and $b$, so $a$ and $b$ must be connected. This gives a contradiction.\\ 
	By induction, the lemma follows.
 \end{proof}

\section{Discussion}
\label{sec:finaldiscussion}

In this final section, we discuss several hypothesis made to build our D2D model and evaluate the impact of relaxing them on the scale of results stated in Theorem~\ref{thm:mainResult}. \\

\noindent
\textbf{The distribution in the random connection rule.} A crucial difference between our model and that of~\cite{le2021continuum} is the choice of a \textit{random} connection rule, instead of a deterministic one. As explained in the introduction---see Section~\ref{sect:Modifs}---it was necessary to implement the strategy, outlined in Section~\ref{sect:Item4}, that we have adopted to exclude the possibility of a fat region~\Romanbar{2}. We have defined the range of a user as~$\frac{r'}{2} \bdcalE$, where~$r'$ is the mean diameter of the area that it covers. We have set~$r'=r$ for users on streets and~$r'=2r$ for the relays at crossroads, while the normalized range~$\bdcalE$ is an exponentially distributed random variable of mean one. It seems quite natural to consider replacing the latter by some other positive and unit mean random variable. We do not see any problem raised in such new setting, except, significantly, for Item~$4$. As the attentive reader may have noticed, doubling the connection radius of relays was indeed not innocuous. Recall that a key step in our reasoning consists of comparing the probabilistic cost to fill a hole in the sparse coverage of a street, either with the help of an additional user uniformly drawn on the latter, or by slightly increasing the ranges of those already present. The inequality~\eqref{eq:inequalityfunctionGH} formalizes it. It turns out that in the case~$\frb=2$, when the hole is delimited by the extremities of the intervals covered by the two relays standing around the street, the inequality collapses as their connection radius~$r'$ is strictly less than~$2r$. We observe a similar phenomenon in the case where normalized ranges all have a half-normal density~$\frac{2}{\pi r'} e^{-\frac{x^2}{\pi r'^2}} \mathds{1}_{x \geq 0}$: the domination~\eqref{eq:inequalityfunctionGH} only holds if the expected range of relays exceeds enough that of users on streets. On the contrary, as the density of~$\bdcalE$ is heavy-tailed---is for instance~$3 \left(1+x \right)^{-3} \mathds{1}_{x \geq 0}$---the inequality becomes systematically false, no matter how much larger is the connection radius of relays. Here, in contrast to the previous cases, we remark that by acting on the value of~$r'$, we do not change the heaviness of the range's tail distribution. It leads to identify the latter as a more critical feature to make the statement true than a gap between connection radii. More precisely, we can show that our proof of Item 4 still works as some asymptotic holds, involving the range's tail distribution of both kind of users. Let~$g$ be the density of the normalized range of relays and~$\tilde{g}:=g*g$ the convolution with itself. Let also~$\calH$ be some positive differentiable function, vanishing at infinity, satisfying~$\calH'=-H\left(\frac{\cdot}{2}\right)'$, where~$H$ is the tail distribution of the normalized range of users on streets. We claim that the inequality~\eqref{eq:inequalityfunctionGH} remains valid in the case~$\frb=2$ as
$$\calH * \tilde{g} \left(u\right) = \underset{u \to +\infty}{\calO}\left(u \times \tilde{g} \left(u\right) \right).$$
In particular, in the heavy-tailed situation described above, it implies that the range density associated to users on streets has to decrease quicker than~$x^{-4}$ for preserving the statement. The reader may now wonder if extra conditions to obey go with the two other cases~$\frb=0$ and~$\frb=1$. We do not believe so. Indeed, as the asymptotics~\eqref{eq:asymptoticGtwovicinityinfinity}, \eqref{eq:asymptoticHtwovicinityinfinity}, \eqref{eq:asymptoticGonevicinityinfinity} and~\eqref{eq:asymptoticHonevicinityinfinity} suggest, the hole to fill is typically far narrower, limited by the uniform drawing of the user on street that covers the area immediately bordering it. The role of ranges (and their tail distribution) then gets less decisive.       
\\


\noindent
\textbf{The directional independence.} 
In our model, we associate to any relay at a crossroad as many independent (random) ranges as there are streets converging to it. Introduce more dependencies, or even a unique range in every direction like~\cite{le2021continuum}, can be viewed as more realistic. Again, it would not pose any difficulty to generalize the Items 1 to 3 of Theorem~\ref{thm:mainResult}. The independence assumption is mostly used to ease the proof of Item 4. More specifically to get the Russo's formulas---see Proposition~\ref{prop:russoformulas}, which usually hold under the condition of state independence of the pivotal elements, here the edges of the mosaic. Yet, we tend to consider it as nothing more than a technical hurdle to overcome, so that Item 4 should remain true even by relaxing the directional independence hypothesis. 
\\

\noindent
\textbf{The urban media.} As argued in Section~\ref{sect:Modifs}, changing the street system of~\cite{le2021continuum} was utterly intentional. Poisson-Delaunay triangulations benefit indeed from far better percolation properties than Poisson-Voronoi mosaics. Thanks to them, we have cleared the phase diagram characterizing the model of the blurred region~\Romanbar{1} and~\Romanbar{3}. See Figure~\ref{fig:DiagPhase} and~\ref{fig:NotreDiag}. There is then little doubt that nothing in our work could help to make equal progress in the original Poisson-Voronoi set-up. This times however, it seems different for the region~\Romanbar{2} and the Item 4 of Theorem~\ref{thm:mainResult}, which deals with it. In the proof of the latter, detailed in Section~\ref{sect:Item4}, we show that the comparison between the partial derivatives of the probability of a long-range connection event is derived from an analytical study of the model at the smallest possible scale, on one single edge of given length, without regard to the specific geometry of the urban media. We thus strongly believe that the statement could be extended to a large class of mosaics, at least those of finite intensity, that is containing almost surely a finite number of edges in any finite region. 

\bibliographystyle{plain}
\bibliography{biblio}
\end{document}